\newtheorem{theorem}{Theorem}[section]
\newtheorem{lemma}[theorem]{Lemma}
\newtheorem{corollary}[theorem]{Corollary}
\def\bb #1{ {\mathbb #1} }
\def\c #1{ {\mathcal #1} }
\def\u { \underline  }
\def\w #1{ {w^{( #1)} } }
\def\kap #1{ { \kappa^{\u #1} } }
\def\ep { {  \varepsilon } }
\def\L #1{ { \c L_{\kappa, #1} } }
\begin{document}
\title{$L$-functions of symmetric powers of Kloosterman sums \\ (unit root $L$-functions and $p$-adic estimates)}

\author{C. Douglas Haessig\footnote{This work was partially supported by a grant from the Simons Foundation (\#314961 to C. Douglas Haessig).}}

\maketitle

\abstract{The $L$-function of symmetric powers of classical Kloosterman sums is a polynomial whose degree is now known, as well as the complex absolute values of the roots. In this paper, we provide estimates for the $p$-adic absolute values of these roots. Our method is indirect. We first develop a Dwork-type $p$-adic cohomology theory for the two-variable infinite symmetric power $L$-function associated to the Kloosterman family, and then study $p$-adic estimates of the eigenvalues of Frobenius. A continuity argument then provides the desired $p$-adic estimates.}

\tableofcontents

\section{Introduction}

Let $\bb F_q$ be a finite field with $q = p^a$ elements, $p \geq 5$. Associated to each $\bar t \in \overline{\bb F}_q^*$ define the Kloosterman sum
\[
Kl_{\bar t, m} := \sum_{\bar x \in \bb F_{q_{\bar t}^{m}}} \psi \circ Tr_{\bb F_{q_{\bar t}^{m}} / \bb F_q} \left( \bar x + \frac{\bar t}{\bar x} \right) \qquad m = 1, 2, 3, \ldots,
\]
where $deg(\bar t) := [\bb F_q(\bar t): \bb F_q]$, $q_{\bar t} := q^{deg(\bar t)}$, and $\psi$ is a fixed non-trivial additive character on $\bb F_q$. It is well-known that the associated $L$-function is quadratic:
\[
L(Kl_{\bar t}, T) := \exp \left( \sum_{m \geq 1} Kl_{\bar t, m} \frac{T^m}{m} \right)  = (1 - \pi_0(\bar t) T)(1 - \pi_1(\bar t) T),
\]
with roots satisfying  $\pi_0(\bar t) \pi_1(\bar t) = q_{\bar t}$, $|\pi_0(\bar t)|_{\bb C}  = |\pi_1(\bar t)|_{\bb C} = \sqrt{q_{\bar t}}$, and $\pi_0(\bar t)$ is a $p$-adic 1-unit, meaning $|1 - \pi_0(\bar t)|_p < 1$.

For $k$ a positive integer, define the \emph{$k$-th symmetric power $L$-function} of the Kloosterman family by
\[
L(Sym^k Kl, T) := \prod_{\bar t \in |\bb G_m / \bb F_q|} \prod_{m=0}^k \frac{1}{1 - \pi_0(\bar t)^{k-m} \pi_1(\bar t)^m T^{deg(\bar t)}}.
\]
Robba \cite{Robba-SymmetricPowersof-1986} gave the first $p$-adic study of this $L$-function, and showed among other things that it is a polynomial defined over $\bb Z$, and gave a conjectural formula for its degree. Using $\ell$-adic techniques, Fu-Wan \cite{FuWan-$L$-functionssymmetricproducts-2005} proved a corrected version of this conjecture as a special case of their study of the $n$-variable Kloosterman sums. In that paper, motivated by a conjecture of Gouv\^ea-Mazur, they asked whether there exists a uniform quadratic lower bound for the Newton polygon of $L(Sym^k Kl, T)$ independent of $k$. Our first main result of this paper affirmatively answers their question:

\begin{theorem}\label{T: finite sym Kloo estimate}
Let $p \geq 5$. For every positive integer $k$, writing $L(Sym^k Kl, T) = \sum c_m T^m$, then
\begin{equation}\label{E: NP in main theme}
ord_q c_m \geq \left(1 - \frac{1}{p-1} \right) m(m-1) .
\end{equation}
\end{theorem}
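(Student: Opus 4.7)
The plan is to follow the cohomological strategy sketched in the abstract: rather than attack each $L(Sym^k Kl, T)$ in isolation, I would package all symmetric powers into a two-variable generating $L$-function $\mathcal{L}(T, u)$, with the new variable $u$ tracking the symmetric degree $k$. Building a Dwork-type cohomology for $\mathcal{L}(T, u)$ should yield a single Frobenius operator whose characteristic series admits a Newton polygon estimate uniform in $u$; specializing in $u$ would then deliver inequality~(\ref{E: NP in main theme}) for every $k$ at once.

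The first task is the cohomological setup. Under Dwork's formalism, the Kloosterman sum at a parameter $\bar t$ is realized as a trace of Frobenius on the cohomology of an overconvergent complex built from the splitting function $\theta(y) = \exp(\pi y - \pi y^p)$ (with $\pi^{p-1} = -p$) applied to $x + t/x$. The symmetric power construction on the Kloosterman motive extends naturally to a two-variable setting in which a $p$-adic Banach space of Laurent series in $(x, t)$ (or in auxiliary resolutions) carries a $u$-dependent Frobenius $\alpha(u)$; taking the alternating characteristic product on the cohomology of the resulting Dwork--Koszul complex realizes $\mathcal{L}(T,u)$, and extraction of the $u^k$-graded piece (or specialization of $u$) recovers $L(Sym^k Kl, T)$. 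The key technical points here are nuclearity of $\alpha(u)$ on the relevant overconvergent Banach space and finiteness of its cohomology.

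The second task is to estimate the Newton polygon of the characteristic series of $\alpha(u)$ uniformly in $u$. The matrix entries of $\alpha(u)$ in a Dwork monomial basis are controlled by coefficients of powers of $\theta$, and the standard bound
\[
ord_p\bigl(\text{coefficient of } y^i \text{ in } \theta(y)\bigr) \geq \frac{i}{p-1}
\]
imposes a weight on each basis element. A Hadamard-type determinantal estimate on $m\times m$ minors then produces a Newton polygon lower bound whose $m$-th vertex grows like $\left(1 - \frac{1}{p-1}\right)m(m-1)$; the hypothesis $p \geq 5$ is used here to keep the constant $1 - 1/(p-1)$ in the regime where the overconvergent analysis (and the splitting of eigenspaces for $\alpha(u)$) is valid.

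The final step, where I expect the principal obstacle to lie, is the continuity argument that descends the two-variable estimate to each individual $L(Sym^k Kl, T)$. The strategy is to exhibit $L(Sym^k Kl, T)$ as a factor (or $p$-adic limit of such factors) of the characteristic series of $\alpha(u_0)$ for an appropriately chosen $u_0$, so that the uniform Newton polygon bound is inherited; the continuity of Newton polygons under $p$-adic limits of entire series whose coefficients depend continuously on $u$ is the key analytic fact. The delicate issues will be (i) showing that the $k$-graded decomposition of the infinite $L$-function is clean enough that the bound transfers to each piece without loss, and (ii) controlling the auxiliary Euler factors introduced by the cohomological setup (boundary contributions from the Dwork--Koszul complex, for instance) so that they do not dilute the quadratic bound one wishes to attribute to $L(Sym^k Kl, T)$ alone.
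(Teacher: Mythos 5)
Your overall shape (interpolate the symmetric-power weight, prove one uniform quadratic estimate, then descend to each $k$) is the paper's strategy, but two load-bearing steps are missing or misconceived. First, $L(Sym^k Kl,T)$ is \emph{not} a $u^k$-graded piece or naive specialization of any two-variable characteristic series. The object one can actually control cohomologically is the infinite symmetric power $L(Sym^{\infty,\kappa}Kl,T)=\prod_{\bar t}\prod_{m\geq 0}\bigl(1-\pi_0(\bar t)^{\kappa-m}\pi_1(\bar t)^m T^{deg(\bar t)}\bigr)^{-1}$ with $\kappa\in\bb Z_p$ a $p$-adic weight, and its local factors contain infinitely many Euler factors beyond those of $Sym^k$. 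These extra factors are removed not by grading but by the identity
\[
L(Sym^k Kl, T) \;=\; \frac{L(Sym^{\infty,k} Kl, T)}{L(Sym^{\infty,-(k+2)} Kl, q^{k+1}T)},
\]
which rests on $\pi_0(\bar t)\pi_1(\bar t)=q_{\bar t}$ at every closed point; since both infinite symmetric power $L$-functions are entire, the zeros of the polynomial $L(Sym^k Kl,T)$ form a sub-multiset of the zeros of the numerator, and the quadratic lower bound for the numerator transfers. Your step ``exhibit $L(Sym^k Kl,T)$ as a factor of the characteristic series at a suitable $u_0$'' names this goal but supplies no mechanism; this identity is precisely the missing idea, and it is what disposes of your worry (ii) about auxiliary Euler factors.

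Second, the uniform estimate cannot be established directly at the weights you ultimately need. The cohomological computation ($H^0_\kappa=0$, $H^1_\kappa$ identified with a power-series module in $t$, and the reduction of Frobenius giving row estimates of size $2m(1-\tfrac{1}{p-1})$) only goes through for $\kappa\in\bb Z_p\setminus\bb Z_{\geq 0}$; at a positive integer weight the symmetric-power connection splits off a finite subsystem and the decomposition lemma fails, so the estimate at $\kappa=k$ is obtained by $p$-adic continuity of $\kappa\mapsto L(Sym^{\infty,\kappa}Kl,T)$, approximating $k$ by non-positive-integer weights. That is the continuity argument that does the work, not continuity in a formal grading variable $u$. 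Finally, a consequential technical slip: the coefficient bound $ord_p\geq i/(p-1)$ you invoke does not hold for $\theta(y)=\exp(\pi y-\pi y^p)$, for which only $ord_p\,\theta_i\geq (p-1)i/p^2$ is available; to reach the constant $1-\tfrac{1}{p-1}$ one must use the infinite (Artin--Hasse) splitting function, which is exactly why the theory has to be reworked with $\theta_\infty$ before the estimate is proved. Moreover the bound is extracted after reduction to cohomology (estimating the matrix of $\bar\beta_{\infty,\kappa}$ on $\c R(b')$ and then applying Dwork's standard argument), not from Hadamard-type minors of the chain-level operator, since at chain level the trace formula only yields the $L$-function up to a $\delta_q$-ratio and even entireness requires the cohomological description.
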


A uniform quadratic lower bound is known \cite{Wan-Dimensionvariationof-1998} for the $L$-function of the $k$-th symmetric product of the first relative $\ell$-adic cohomology of the Legendre family of elliptic curves $E_t: x_1^2 = x_1(x_1-1)(x_1 - t)$. This $L$-function, defined analogously to $L(Sym^k Kl, T)$, equals the nontrivial part of the Hecke polynomial associated to the $p$-th Hecke operator $T_{k+2}(p)$ for level 2 acting on cusp forms of weight $k + 2$ (see \cite{Adolphson-$p$-adictheoryof-1976}). For the Kloosterman family, recent work of Yun \cite{MR3305309}, based on conjectures of Evans \cite{MR2310495}, gives an automorphic interpretation for $L(Sym^k Kl, T)$ when $k$ is small. It has also been shown by Fu-Wan \cite{MR2425148} that $L(Sym^k Kl, T)$ is geometric (or motivic) in nature, meaning it equals the local factor at $p$ of the zeta function of a (virtual) scheme of finite type over $\bb Z$.

Our motivation for the following study comes from a related but different direction. In \cite{Dwork-Heckepolynomials-1971}, Dwork first defined the unit root $L$-function of the Legendre family of elliptic curves essentially as follows. Setting $X := \bb A^1 \setminus \{0, 1, H(t) = 0\}$, where $H(t)$ is the Hasse polynomial, the map $f: E_t \mapsto t \in X$ gives a family of ordinary elliptic curves whose first relative $p$-adic \'etale cohomology $R^1 f_* \bb Z_p$ gives a continuous rank one representation $\rho_E: \pi_1^\text{arith}(X) \rightarrow GL_1(\bb Z_p)$. This has the property that, for a closed point $t \in |X / \bb F_q|$, the image of the geometric Frobenius $Frob_t$ is $\rho_E(Frob_t) = \pi_0(t)$, where $\pi_0(t)$ is the unique $p$-adic unit root of the zeta function of the fiber $E_t$. For $k$ a positive integer, define the unit root $L$-function
\[
L(\rho_E^{\otimes k}, T) := \prod_{\bar t \in | X / \bb F_q|} \frac{1}{1 - \pi_0(\bar t)^k T^{deg(\bar t)}}.
\]
For every $k$, meromorphy was shown by Dwork in \cite{Dwork-Heckepolynomials-1971}. It also has a $p$-adic modular interpretation. Define the Fredholm determinant $D(k, T) := det(I - U_p T \mid M_k)$, where  $M_k$ is the space of overconvergent $p$-adic modular forms of level 2 and weight $k$, and $U_p$ is the Atkin operator. Then there is the relation:
\begin{equation}\label{E: p-adic modularity}
L(\rho_E^{\otimes k}, T) = \frac{D(k+2, T)}{D(k, pT)}.
\end{equation}
This allows one to obtain results about modular forms from results about the unit root $L$-function. See \cite{Wan-quickintroductionto-1999} for a detailed exposition; see also \cite{Wan-Dimensionvariationof-1998}. Their special values $L(\rho_E^{\otimes k}, 1)$ are also related to geometric Iwasawa theory as discussed in \cite{MR880957}.

Unit root $L$-functions do not appear to have a nice cohomology theory, however a related $L$-function does appear to have one, which is essentially the main result of this paper. To motivate, notice that using (\ref{E: p-adic modularity}) recursively:
\begin{align*}
D(k, T) &= \prod_{i \geq 0} L_\text{unit}(k - 2 - 2i, q^i T) \\
&= \prod_{\bar t \in | X / \bb F_q|} \prod_{i \geq 1} (1 - \pi_0(\bar t)^{k-2-i} \pi_1(\bar t)^i T^{deg(\bar t)})^{-1}.
\end{align*}
The latter product is similar to the $k-2$ symmetric power except that the product runs over all $i \geq 1$ rather than $1 \leq i \leq k$.  This motivates us to define the \emph{infinite $k$-symmetric power $L$-function} of the Legendre family $E$ by
\[
L(Sym^{\infty, k} E, T) := \prod_{\bar t \in | X / \bb F_q|} \prod_{i \geq 1} (1 - \pi_0(\bar t)^{k-2-i} \pi_1(\bar t)^i T^{deg(\bar t)})^{-1},
\]
and thus the above relation becomes
\begin{equation}\label{E: modular form inf sym}
det(I - U_p T \mid M_k) = L(Sym^{\infty, k-2} E, T).
\end{equation}
(This relation ultimately comes from the Eichler-Selberg trace formula.) We conjecture that there is a $p$-adic cohomology theory for $L(Sym^{\infty, k} E, T)$ that lines up with (\ref{E: modular form inf sym}). Furthermore, this cohomology theory should have a natural dual theory. One possible approach is to take a $p$-adic limit in $k$ of Adolphson's work \cite{Adolphson-$p$-adictheoryof-1976}. This is likely related to $p$-adic modular forms and $p$-adic modular symbols. 

Our main reason to suspect the existence of such a $p$-adic cohomology theory comes from the Kloosterman family studied in this paper. Let $\kappa \in \bb Z_p$, where $\bb Z_p$ denotes the $p$-adic integers. Define the infinite $\kappa$-symmetric power $L$-function (using the roots $\pi_0$ and $\pi_1$ from the Kloosterman sums above)
\[
L(Sym^{\infty, \kappa} Kl, T) := \prod_{\bar t \in | \bb G_m / \bb F_q|} \prod_{m \geq 0} \frac{1}{1 - \pi_0(\bar t)^{\kappa - m} \pi_1(\bar t)^m T^{deg(\bar t)}}.
\]
In this paper, we  develop a $p$-adic cohomology theory $H_\kappa^\bullet$ (Section \ref{S: symmetric power cohom}) for the infinite $\kappa$-symmetric power $L$-function which may be used to meromorphically describe the $L$-function:
\[
L(Sym^{\infty, \kappa} Kl, T) = \frac{det(1 - \bar \beta_{\kappa} T \mid H_\kappa^1)}{det(1 - q \bar \beta_{\kappa} T \mid H^0_\kappa)},
\]
where $\bar \beta_\kappa$ is a completely continuous operator defined on $p$-adic spaces $H^1_\kappa$ and $H^0_\kappa$. We note that these types of $L$-functions are not expected to be rational functions in general. For the Kloosterman family, we will show $H^0_\kappa = 0$ when $\kappa \not=0$, and of dimension one when $\kappa = 0$, and when $\kappa \in \bb Z_p \setminus \bb Z_{\geq 0}$ then $H^1_\kappa$ is infinite dimensional. It is interesting that the case $\kappa \in \bb Z_p \setminus \bb Z_{\geq 0}$ is the easiest to handle, whereas we are unable to compute cohomology when $\kappa$ is a positive integer.

Studying the action of Frobenius on cohomology leads to our third main result:

\begin{theorem}\label{T: main theme}
Let $p \geq 5$. For every $\kappa \in \bb Z_p$, $L(Sym^{\infty, \kappa} Kl, T)$ is $p$-adic entire with $T=1$ a root. Furthermore, writing $L(Sym^{\infty, \kappa} Kl, T) = \sum_{m \geq 0} c_m T^m \in 1+ T\bb Z_p[[T]]$, then $ord_q c_m$ satisfies (\ref{E: NP in main theme}).
\end{theorem}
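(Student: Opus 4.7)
The plan is to deduce the theorem from the cohomological decomposition
\[
L(Sym^{\infty, \kappa} Kl, T) = \frac{\det(1 - \bar \beta_{\kappa} T \mid H_\kappa^1)}{\det(1 - q \bar \beta_{\kappa} T \mid H^0_\kappa)}
\]
constructed earlier in Section \ref{S: symmetric power cohom}, together with the structural results on $H^\bullet_\kappa$ and explicit $p$-adic estimates for the matrix of $\bar \beta_\kappa$ in a Dwork-style orthonormal basis. For entireness, the case $\kappa \in \bb Z_p \setminus \{0\}$ is immediate: since $H^0_\kappa = 0$, the denominator is trivial and $L(Sym^{\infty, \kappa} Kl, T)$ is the Fredholm determinant of the completely continuous operator $\bar\beta_\kappa$ on $H^1_\kappa$, hence $p$-adic entire. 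For $\kappa = 0$, the denominator is a single linear factor $1 - q\lambda T$, and I would cancel this potential pole against a matching eigenvalue in the numerator, either by exhibiting an explicit eigenvector in $H^1_0$ of eigenvalue $q\lambda$ or by a continuity/exact-sequence argument using $\kappa$-values where $H^0 = 0$. To establish $T = 1$ as a root I would exhibit a constant-like eigenvector of $\bar\beta_\kappa$ on $H^1_\kappa$ with eigenvalue $1$, uniform in $\kappa$, arising from the trivial contribution in the underlying Dwork complex and corresponding to the $m = 0$ factor in the Euler product.

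Next, for the Newton polygon bound, I would fix a monomial-type orthonormal basis $\{e_i\}_{i \geq 0}$ for $H^1_\kappa$ adapted to a splitting function $\theta(t) = \sum \theta_n t^n$ built from the Artin--Hasse exponential, whose coefficients satisfy $ord_p \theta_n \geq n/(p-1)$. In this basis the matrix $(b_{ij})$ of $\bar\beta_\kappa$ should satisfy an entry-wise lower bound of the form
\[
ord_q b_{ij} \geq \left(1 - \frac{1}{p-1}\right)(i + j).
\]
Granted this, the coefficient $c_m$ is (up to sign) a sum of $m \times m$ principal minors, and the Leibniz expansion yields
\[
ord_q c_m \geq 2\left(1 - \frac{1}{p-1}\right)\bigl(0 + 1 + \cdots + (m-1)\bigr) = \left(1 - \frac{1}{p-1}\right) m(m-1),
\]
which is the required estimate. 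Integrality of the series in $1 + T \bb Z_p[[T]]$ follows from integrality of the Kloosterman sums combined with the $\bb Z_p$-structure of the chain complex defining $H^\bullet_\kappa$.

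The principal obstacle is establishing the matrix entry bound with the sharp constant $1 - 1/(p-1)$. This demands precise bookkeeping of how $\bar\beta_\kappa$ interacts with the Dwork splitting on the symmetric-power cohomology, including control of the cancellations introduced by the $\kappa$-dependence when $\kappa$ is not a nonnegative integer. The infinite dimensionality of $H^1_\kappa$ for $\kappa \in \bb Z_p \setminus \bb Z_{\geq 0}$ is not itself a barrier once complete continuity is in hand, but it does force all estimates to be uniform in the basis indices, and the Frobenius matrix must have the correct Hodge-like triangular shape in the chosen basis for the minor bound to give the clean quadratic growth $m(m-1)$. The $\kappa = 0$ pole cancellation is a secondary but delicate point, best handled either via a short exact sequence of chain complexes or by $p$-adic continuity in $\kappa$ from generic values where $H^0_\kappa$ already vanishes.
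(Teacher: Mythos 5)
Your overall architecture (cohomological formula for the $L$-function, $H^0_\kappa=0$ for $\kappa\neq 0$, Artin--Hasse splitting function to reach the constant $1-\frac{1}{p-1}$, and the principal-minor expansion of the Fredholm determinant) does match the paper, but two steps as you describe them have genuine gaps. First, the $T=1$ root. Producing a ``constant-like eigenvector of $\bar\beta_\kappa$ with eigenvalue $1$, corresponding to the $m=0$ factor in the Euler product'' is unsupported: under the identification $H^1_\kappa\cong \c R(b')$ the image of $1$ under $\bar\beta_\kappa$ is $1$ plus higher-order terms after reduction, and no invariant class is visible; worse, attributing the zero at $T=1$ to the $m=0$ Euler factor (the unit root $L$-function) is circular, because the vanishing of $L_{\text{unit}}(Kl,\kappa,T)$ at $T=1$ is itself deduced in the paper from Theorem \ref{T: T=1 root} via (\ref{E: relate unit to sym}). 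The paper's actual argument imports Robba's Theorem B (that $T=1$ is a root of every finite symmetric power $L(Sym^k Kl,T)$, which rests on a deep result of Dwork) and then passes to the limit along integers $k_m\to\infty$ with $k_m\to\kappa$ $p$-adically; some input of this strength, or a genuinely worked-out cohomological computation, is required and your sketch supplies neither.

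Second, the Newton polygon bound for \emph{all} $\kappa\in\bb Z_p$. Your estimate presupposes a monomial basis of $H^1_\kappa$ with a controlled Frobenius matrix for every $\kappa$, but the identification $H^1_\kappa\cong\c R(b')$ and the decomposition $\c S(b',\ep;0)=\c R(b';0)\oplus\widehat\partial_\kappa\c S(b',\ep;\ep)$ (Lemma \ref{L: Dwork decomp H}, Lemma \ref{L: technical}, Theorem \ref{T: cohom on hat partial}) are established only for $\kappa\in\bb Z_p\setminus\bb Z_{\geq 0}$; the paper explicitly cannot control $H^1_\kappa$ when $\kappa$ is a nonnegative integer. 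The estimate at integral $\kappa$ --- which is exactly what Theorem \ref{T: finite sym Kloo estimate} needs --- is obtained by density of $\bb Z_p\setminus\bb Z_{\geq 0}$ and continuity of $\kappa\mapsto L(Sym^{\infty,\kappa}Kl,T)$, a device you invoke for the $\kappa=0$ pole cancellation but omit where it is indispensable. Two smaller points: your entrywise bound $ord_q b_{ij}\geq(1-\frac{1}{p-1})(i+j)$ is stronger than what is proved or needed --- the paper shows a bound depending only on the output index, $ord_p\,B(m,n)\gamma^{-2m}\geq 2m(1-\frac{1}{p-1})$, which already yields $ord_q c_m\geq(1-\frac{1}{p-1})m(m-1)$ by Dwork's minor argument --- and the $q$-adic slopes must be extracted from the $p$-power Frobenius via $\bar\beta_{\infty,\kappa,1}^a=\bar\beta_{\infty,\kappa,a}$ and the roots-of-unity factorization, together with the re-derivation of the whole cohomology for the new connection $\widehat\partial_\kappa$ and a careful adjustment of the growth parameters $b,b'$; this is where the bulk of the actual work lies, not merely ``bookkeeping.''
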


The proof of Theorem \ref{T: finite sym Kloo estimate} now follows from the following identity: for $k$ a positive integer,
\begin{equation}\label{E: sym Kloo}
L(Sym^k Kl, T) = \frac{L(Sym^{\infty, k} Kl, T)}{L(Sym^{\infty, -(k+2)} Kl, q^{k+1} T)}.
\end{equation}

Some heuristic calculations suggest that there is a chance the lower bound in Theorem \ref{T: main theme} (and thus Theorem \ref{T: finite sym Kloo estimate}) may be improved to simply $m(m-1)$, but we have been unable to prove this, and it may be that it fails for some $m$. We conjecture that the zeros and poles of $L(Sym^{\infty, \kappa} Kl, T)$ are all simple except for possibly finitely many, and that adjoining the collection of zeros and poles to $\bb Q_p$ produces a finite extension field of $\bb Q_p$ (the so-called $p$-adic Riemann hypothesis).

We feel strongly that a relation such as (\ref{E: modular form inf sym}) exists for $L(Sym^{\infty, k} Kl, T)$. As the Kloosterman sums are defined over the totally real field $\bb Q(\zeta_p + \zeta_p^{-1})$, one could look at $p$-adic Hilbert modular forms for such a relation. However, since Kloosterman sums depend on the embedding of the character $\psi$, a relation would involve instead the more complicated $L$-function $L(\otimes_{\sigma} Sym^{\infty, \kappa} Kl_{\sigma}, T)$, where $\sigma$ runs over the real embeddings of $\bb Q(\zeta_p + \zeta_p^{-1})$. Further, running over different symmetric powers for each embedding, the $L$-function 
\[
L(Sym^{\infty, \kappa_1} Kl_{\sigma_1} \otimes \cdots \otimes Sym^{\infty, \kappa_{(p-1)/2}} Kl_{\sigma_{(p-1)/2}}, T)
\]
is likely related to non-parallel weight $(\kappa_1, \ldots, \kappa_{(p-1)/2})$ $p$-adic Hilbert modular forms.

The \emph{Kloosterman unit root $L$-function} is defined by
\begin{equation}\label{E: unit root Kloo}
L_\text{unit}(Kl, \kappa, T) := \prod_{\bar t \in | \bb G_m / \bb F_q|} \frac{1}{1 - \pi_0(\bar t)^\kappa T^{deg(\bar t)}},
\end{equation}
and note that
\begin{equation}\label{E: relate unit to sym}
L_\text{unit}(Kl, \kappa, T) = \frac{ L(Sym^{\infty, \kappa} Kl, T) }{ L(Sym^{\infty, \kappa-2} Kl, qT) },
\end{equation}
and so we have the cohomological description of the unit root $L$-function:
\[
L_\text{unit}(Kl, \kappa, T) = \frac{det(1 - \bar \beta_{\kappa} T \mid H^1_\kappa)}{det(1 - q \bar \beta_{\kappa-2} T \mid H^1_\kappa)} \qquad (\text{when } \kappa \not = 0 \text{ or } 2).
\]
This shows the unit root $L$-function has a root at $T=1$ and a pole at $T = 1/q$. It is unclear whether there are any cancellations among the remaining zeros and poles, however, we expect that there are few if any. We note that while Artin's conjecture does not carry over to geometric $p$-adic representations, it does for infinite symmetric powers over curves. See \cite{MR2364844} for more details.

The $p$-adic cohomology theory developed here is of de Rham type, and may be seen as an extension of Dwork's $p$-adic cohomology theory. We thus expect techniques from Dwork's classical theory may be carried over to this theory. For example, a dual theory seems possible, perhaps giving rise to possible symmetry? Another example is studying the variation of a family of unit root $L$-functions. In joint work with Steven Sperber \cite{Haessig:2015kq}, we examine how the unit roots of a family (of unit root $L$-functions) vary with respect to the parameter by means of establishing a dual theory for infinite symmetric power $L$-functions.

While we have restricted our study to the case of the one-variable Kloosterman family, the cohomology theory developed here may be used for other families, such as those studied in \cite{MR3239170} and \cite{Haessig_2013_Families-of-generalized-Kloosterman-sums}. We hope to say more about their cohomology in a future article.

\section{Relative Bessel cohomology}\label{S: rel bes}

Attached to the Kloosterman family are the relative cohomology spaces $H^0_t(b', b)$ and $H^1_t(b', b)$ defined below. The subscript $t$ is meant to remind us that we will be viewing the Kloosterman family $x + \frac{t}{x}$ with $t$ as a parameter, and thus $H^0_t$ and $H^1_t$ will be modules over function spaces in $t$. In this section, we will recall Dwork's ``Bessel cohomology'' construction \cite[Section 2]{MR0387281} of $H^0_t$ and $H^1_t$ but modified according to \cite{Haessig_2013_Families-of-generalized-Kloosterman-sums}. In particular, we will see that $H^0_t = 0$ and $H^1_t$ is a free module of rank 2 over a certain power series ring $L(b')$. We then study the action of Frobenius on $H^1_t$. In the next section, we will take, in an appropriate sense, the infinite symmetric power of $H^1_t$ and define a cohomology theory on it. 

Throughout this paper we fix a prime $p \geq 5$ (see the beginnings of Sections \ref{SS: Frob} and \ref{SS: Frob-2} for the reason). Next, fix $\pi \in \overline{\bb Q}_p$ satisfying $\pi^{p-1} = -p$, and for convenience, set $\Omega := \bb Q_p(\pi)$. Set $\tilde b := (p-1)/p$. Throughout the following, let $b$ and $b'$ be real numbers satisfying:
\begin{equation}\label{E: b's}
\tilde b \geq b > 1/(p-1) \qquad \text{and} \qquad  b \geq b', \qquad \text{and set } \qquad \ep := b - \frac{1}{p-1}.
\end{equation}

Dwork's theory works best when the spaces are tailored to suit the family in hand. Observe that we may write $\exp \pi (x + \frac{t^q}{x}) = \sum_{n \geq 0, u \in \bb Z} A(n, u) t^n \cdot t^{q m(u)} x^u$, where $m(u) := \max\{ -u, 0 \}$. This guides us in the following definition of the space $K_q(b', b)$ below. Let $\rho \in \bb R$. Define the following spaces:
\begin{align*}
L(b'; \rho) &:= \left\{ \sum_{n \geq 0} A(n) t^n \mid A(n) \in \Omega, ord_p \> A(n) \geq 2b'n + \rho \right\} \\
L(b') &:= \bigcup_{\rho \in \bb R} L(b'; \rho) \\
K_q(b', b; \rho) &:= \left\{ \sum_{n \geq 0, u \in \bb Z} A(n, u) t^n \cdot t^{q m(u)} x^u \mid A(n, u) \in \Omega, ord_p \> A(n, u) \geq 2b'n + b|u| + \rho \right\} \\
K_q(b', b) &:= \bigcup_{\rho \in \bb R} K_q(b', b; \rho),
\end{align*}
where $q = p^a$ with $a \geq 0$. Note that $K_q(b'/q, b)$ is an $L(b'/q)$-module. When $a = 0$, we will often write $K(b', b)$ for $K_1(b', b)$. Define the (twisted) relative boundary operator
\begin{align*}
D_{t^q} :&= x \frac{\partial}{\partial x} + \pi \left( x - \frac{t^q}{x} \right) \\
&= e^{-\pi(x + t^q/x)} \circ x \frac{\partial}{\partial x}  \circ e^{\pi(x + t^q/x)},
\end{align*}
which acts on $K_q(b'/q, b)$, and thus defines the cohomology spaces
\[
H^0_{t^q}(b'/q, b) := ker( D_{t^q} \mid K_q(b'/q, b)) \qquad \text{and} \qquad H^1_{t^q}(b'/q, b) := K_q(b'/q, b) / D_{t^q} K_q(b'/q, b).
\]
Define
\begin{align*}
V_q(b', b; \rho) &:= \left(\Omega[[t]] + \Omega[[t]] \cdot \frac{t^q}{x} \right) \cap K_q(b', b; \rho) \\
V_q(b', b) &:= \bigcup_{\rho \in \bb R} V_q(b', b; \rho).
\end{align*}
When $a = 0$, we will write $V(b', b)$ for $V_1(b', b)$.

\begin{theorem}[Theorem 2.1 of \cite{MR0387281}]\label{T: rel dwork decomp}
We have
\[
H^0_{t^q}(b'/q, b) = 0 \qquad \text{and} \qquad H^1_{t^q}(b'/q, b) \cong V_q(b'/q, b).
\]
More specifically, 
\[
K_q(b'/q, b; 0) = V_q(b'/q, b; 0) \oplus D_{t^q} K_q(b'/q, b; \ep).
\]
\end{theorem}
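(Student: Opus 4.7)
The plan is to adapt Dwork's standard reduction argument from \cite{MR0387281} to the weighted spaces $K_q(b'/q, b; \rho)$. The existence of the decomposition comes from an explicit reduction formula that iteratively lowers $|u|$ while tracking $p$-adic weights; the uniqueness and the vanishing of $H^0$ come from a coefficient-wise recurrence analysis of the equation $D_{t^q}\xi = \eta$.

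The central algebraic identity is that for $u \geq 2$,
\[
t^n x^u = D_{t^q}\bigl(\pi^{-1} t^n x^{u-1}\bigr) - \pi^{-1}(u-1) t^n x^{u-1} + t^{n+q} x^{u-2},
\]
with a mirror identity, applying $D_{t^q}$ to $x^{u+1}$, handling $u \leq -2$. Using $ord_p \pi^{-1} = -1/(p-1)$ and defining the weight of $c\,t^n x^u$ in $K_q(b'/q, b; \cdot)$ to be $ord_p(c) - (2b'n/q + b|u|)$, a direct check shows that the coboundary argument $\pi^{-1} t^n x^{u-1}$ gains weight exactly $\ep = b - 1/(p-1)$, the first non-coboundary summand $-\pi^{-1}(u-1) t^n x^{u-1}$ also gains weight $\geq \ep$, and the second summand $t^{n+q} x^{u-2}$ gains weight $2(b-b') \geq 0$. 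For the boundary cases, $u = \pm 1$, one uses $t^n x = D_{t^q}(\pi^{-1} t^n) + t^{n+q}/x$ together with the fact that $t^{n+q}/x$ already lies in $V_q$.

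Applied iteratively, each starting monomial yields a finite binary tree of reductions terminating in terms with $u \in \{0, -1\}$ belonging to $\Omega[[t]] + \Omega[[t]] \cdot t^q/x \subset V_q$. For an arbitrary $\xi \in K_q(b'/q, b; 0)$, summing the per-monomial decompositions gives $\xi = v + D_{t^q}\eta$ with $v \in V_q(b'/q, b; 0)$ and $\eta \in K_q(b'/q, b; \ep)$. Convergence of both sums in their respective weighted topologies follows from the fact that weights are non-decreasing along every branch of the reduction tree, combined with the uniform control supplied by $\xi \in K_q(b'/q, b; 0)$.

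The remaining ingredient, and the step I expect to be the main obstacle, is the disjointness $V_q(b'/q, b; 0) \cap D_{t^q} K_q(b'/q, b; \ep) = 0$ together with the injectivity of $D_{t^q}$ (whence $H^0_{t^q} = 0$). Expanding $D_{t^q}\xi = \eta$ coefficient-wise produces the recurrence
\[
u\, c_{n,u} + \pi\, c_{n,u-1} - \pi\, c_{n-q,u+1} = (\text{coefficient of } t^n x^u \text{ in } \eta),
\]
where $c_{n-q,u+1} = 0$ if $n < q$. When $\eta = 0$ the $p$-adic decay forced on $\xi$ by membership in $K_q$ lets one propagate the relation inward from $|u| \to \infty$ and deduce $\xi = 0$; when $\eta \in V_q$, the vanishing of the right-hand side for $u \geq 1$ and $u \leq -2$ combined with the same propagation forces $\eta = 0$. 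The delicate point is managing the $t^q$-shift in the recurrence so that the implied Laurent series in $t$ stays inside $L(b'/q)$; this is where the careful choice of the weights $b'/q$ on the $t$-side and $b$ on the $x$-side (both constrained by \eqref{E: b's}) is essential.
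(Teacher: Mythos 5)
Your plan is essentially the route the paper itself takes: the paper offers no independent argument but cites Dwork's Theorem 2.1 (the reduction/decomposition) and Lemma 2.5 (for $\ker D_{t^q}=0$), remarking only that the spaces are slightly modified. Your existence half is a correct reconstruction of that reduction: the identity $t^n x^u = D_{t^q}(\pi^{-1}t^n x^{u-1}) - \pi^{-1}(u-1)t^n x^{u-1} + t^{n+q}x^{u-2}$ is right, the coboundary argument gains exactly $\ep = b - \tfrac{1}{p-1}$, the remainder terms gain $\geq 0$ (the $x^{u-2}$ branch gains $2(b-b')\geq 0$, which is why $b \geq b'$ is imposed), each monomial's reduction tree is finite, and summing over the monomials of $\xi \in K_q(b'/q,b;0)$ converges in the weighted topology, giving $K_q(b'/q,b;0) = V_q(b'/q,b;0) + D_{t^q}K_q(b'/q,b;\ep)$.

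The thin spot is the uniqueness/injectivity half, and as stated your propagation does not close. Working at a fixed absolute $t$-degree $N$ and propagating in $|u|$, the equations with $u\geq 1$ and $u\leq -2$ (zero right-hand side) leave the coefficient of $t^N x^{-1}$ completely unconstrained, and that coefficient re-enters the system only through the $u=-2$ equation at level $N+q$ (via the term $-\pi C_{N-q,u+1}$); so a level-by-level argument never forces $\eta = 0$, and one seemingly needs the Bessel-type growth estimates of Dwork's Lemma 2.5. In fact an elementary argument does work, but the induction must be organized on the absolute $t$-degree rather than on $|u|$ at a fixed level: the coupling term $-\pi C_{N-q,u+1}$ comes with a $t^q$-shift, so it vanishes for $N<q$ and is removed inductively. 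At each degree one first kills the $x^{\geq 0}$ part of $\xi$ using the $u\geq 1$ equations, since the seed $C_{N,0}$ forces $C_{N,u}=(-\pi)^u C_{N,0}/u!$, whose valuation grows like $s_p(u)/(p-1)=O(\log u)$ against the required linear growth $\geq b\,u + \rho$ (any $b>0$ suffices here; $b>\tfrac{1}{p-1}$ is only needed to make $\ep>0$); one then kills the $x^{\leq -1}$ part by the same factorial-versus-linear comparison applied to the $u\leq -2$ equations written in the variable $t^q/x$, again inducting on $t$-degree to eliminate the shifted term. With $\xi=0$ one gets $\eta=D_{t^q}\xi=0$, which gives both the directness and, as the special case $\eta=0$, the vanishing of $H^0_{t^q}$. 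So the approach is sound and matches the cited source, but the step you flagged as the main obstacle is genuinely incomplete as written, and the fix is the reorganized induction just described (or simply invoking Dwork's Lemma 2.5, as the paper does).
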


\begin{proof}
This is \cite[Theorem 2.1]{MR0387281} but modified slightly to suit the spaces defined above. Dwork does not explicitly point out that $\text{ker } D_{t^q} = 0$, however, this follows immediately from \cite[Lemma 2.5]{MR0387281}.
\end{proof}

As a consequence, we from now on will identify the first cohomology group $H_{t^q}^1(b' / q, b)$ with $V_q(b'/q, b)$, a free $L(b'/q)$-module of rank two with basis $\{ 1, \pi t^q / x \}$. We now study the action of Frobenius on this space.

\bigskip \noindent {\bf Relative Frobenius.} Dwork's Frobenius, denoted $\alpha_a$ below, is essentially the geometric Frobenius map $\psi_x: x \mapsto x^{1/q}$ but twisted in a similar manner to that of $D_t$. It is defined as follows. First, define Dwork's splitting function
\[
\theta(z) := \exp \pi (z - z^p) = \sum_{i \geq 0} \theta_i z^i,
\]
where it is well-known that $ord_p \> \theta_i \geq (p-1) i / p^2$. The splitting function gives a $p$-adic analytic representation of an additive character on $\overline{\bb F}_q$: specifically, $\theta(1)$ is a primitive $p$-th root of unity thanks to the oddness of $p$-adic analysis, and for $\bar z \in \bb F_q$ with $q = p^a$, and $\hat z$ the Teichm\"uller lift of $\bar z$,
\[
\theta(1)^{Tr_{\bb F_q / \bb F_p}( \bar z)} = \theta(\hat z) \theta(\hat z^p) \cdots \theta(\hat z^{p^{a-1}}).
\]
See \cite[Prop. 6.2]{MR1274045} for discussion of this splitting function. We now consider the $p$-adic analytic analogue of $\theta(1)^{Tr_{\bb F_q / \bb F_p}( x + \frac{t}{x})} = \theta(1)^{Tr(x)} \theta(1)^{Tr(t/x)}$: for each $m \geq 1$, define
\begin{align*}
F(t, x) &:= \theta(x) \theta(t/x) \\
F_m(t, x) &:= \prod_{i=0}^{m-1} F(t^{p^i}, x^{p^i}).
\end{align*}
Define the operator $\psi_x: \sum A_u x^u \mapsto \sum A_{pu} x^u$, and observe that $\psi_x: K(b', b; 0) \rightarrow K(b', pb; 0)$. Dwork's Frobenius is defined by
\begin{align*}
\alpha_1(t) :&= \psi_x \circ F(t, x) \\
&= e^{-\pi(x + t/x)} \circ \psi_x  \circ e^{\pi(x + t/x)},
\end{align*}
and for $m \geq 1$,
\begin{align*}
\alpha_m(t) :&= \psi^m_x \circ F_m(t, x) \\
&= \alpha_1(t^{p^{m-1}}) \circ \cdots \alpha_1(t^p) \circ \alpha_1(t).
\end{align*}
In this definition $F(t, x)$ acts via multiplication. Now, since $F(t, x) \in K(\tilde b / p, \tilde b / p; 0)$ we see that $F(t^{p^i}, x^{p^i}) \in K(\tilde b/ p^{i+1}, \tilde b / p^{i+1}; 0 )$, and thus we have the well-defined map $\alpha_m(t): K(b', b; 0) \rightarrow K_{p^m}(b' / p^m, b; 0)$. Furthermore, by construction, 
\[
p^m D_{t^{p^m}} \circ \alpha_m = \alpha_m \circ D_t,
\]
and so $\alpha_m$ induces a map on relative cohomology $\bar \alpha_m(t): H_{t}^1(b', b) \rightarrow H_{t^{p^m}}^1(b'/p^m, b)$. 

The next theorem provides details on the entries of the matrix of $\bar \alpha_m(t)$ with respect to the bases $\{1, \pi t/x\}$ and $\{1, \pi t^{p^m} / x\}$. These entries will consist of power series in $t$ whose specific growth conditions will help us give a well-defined Frobenius map on the infinite symmetric power spaces of $H^1_t$ given in Section \ref{SS: Frob}.

\begin{theorem}\cite[Theorem 2.2 and Lemma 3.2]{MR0387281}\label{T: Dwork rel cohom}
With $\{1, \pi t/x\}$ and $\{1, \pi t^{p^m} / x\}$ as bases of $H_t^1(\tilde b, \tilde b)$ and $H_{t^{p^m}}^1(\tilde b/p^m, \tilde b)$, respectively, the relative Frobenius $\bar \alpha_m$ satisfies
\[
\bar \alpha_m(t) (1) = A_{m, 1}(t) + A_{m, 3}(t) \frac{\pi t^{p^m}}{x} \qquad \text{and} \qquad \bar \alpha_m(t)(\frac{\pi t}{x}) = A_{m, 2}(t) + A_{m, 4}(t) \frac{\pi t^{p^m}}{x},
\]
where
\begin{equation}
\begin{split}
A_{m, 1} &\in L(\tilde b/p^m ; 0) \\
A_{m, 2} &\in L(\tilde b / p^m; \frac{1}{p-1} - \frac{\tilde b}{p})
\end{split}
\qquad  \qquad 
\begin{split}
A_{m, 3} &\in L(\tilde b/p^m; \tilde b - \frac{1}{p-1}) \\
A_{m, 4} &\in L(\tilde b/p^m; \tilde b - \frac{\tilde b}{p})
\end{split}
\end{equation}
Furthermore,
\[
A_{m, 1}(0) = 1 \quad A_{m, 2}(0) = 0 \quad A_{m, 3}(0) \not= 0 \quad A_{m, 4}(0) = p^m.
\]

\end{theorem}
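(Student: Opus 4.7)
The plan is to derive the matrix description by computing $\alpha_m(t)$ explicitly on the representatives $1$ and $\pi t/x$ in $K(\tilde b, \tilde b; 0)$, then applying the decomposition of Theorem \ref{T: rel dwork decomp} to project into $V_{p^m}(\tilde b/p^m, \tilde b)$. First I would unpack
\[
\alpha_m(t)(1) = \psi_x^m\bigl(F_m(t,x)\bigr) \qquad \text{and} \qquad \alpha_m(t)\bigl(\tfrac{\pi t}{x}\bigr) = \psi_x^m\bigl(F_m(t,x)\cdot \tfrac{\pi t}{x}\bigr),
\]
and use the growth bound $\text{ord}_p\, \theta_i \geq (p-1)i/p^2$ together with the product structure $F_m = \prod_i \theta(x^{p^i}) \theta(t^{p^i}/x^{p^i})$ to verify that these lie in $K_{p^m}(\tilde b/p^m, \tilde b; 0)$. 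The operator $\psi_x^m$ preserves the $t$-growth while selecting only $x$-exponents divisible by $p^m$, so after dividing those $x$-exponents by $p^m$ the $x$-growth scales appropriately from $\tilde b/p^m$ back up to $\tilde b$.

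Second, I would feed each Frobenius image through the decomposition $K_{p^m}(\tilde b/p^m, \tilde b; 0) = V_{p^m}(\tilde b/p^m, \tilde b; 0) \oplus D_{t^{p^m}} K_{p^m}(\tilde b/p^m, \tilde b; \ep)$ and read off the $V_{p^m}$-component, which by construction is of the form $A(t) + B(t)\cdot \pi t^{p^m}/x$. This defines the four power series $A_{m,i} \in L(\tilde b/p^m)$. The precise growth index $\rho$ for each $A_{m,i}$ is forced by requiring $A_{m,i}(t) \cdot v$ to belong to $K_{p^m}(\tilde b/p^m, \tilde b; 0)$, where $v$ is the basis vector $A_{m,i}$ multiplies. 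The target basis vector $\pi t^{p^m}/x$ contributes $1/(p-1)$ from its $\pi$-factor and $\tilde b$ from its $1/x$-factor, which together force the shift $\tilde b - 1/(p-1)$ in the growth of $A_{m,3}$; similarly the source basis vector $\pi t/x$ accounts for the $1/(p-1) - \tilde b/p$ shift in $A_{m,2}$ and the symmetric shift $\tilde b - \tilde b/p$ in $A_{m,4}$.

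Third, the specializations at $t=0$ follow from $F_m(0, x) = \prod_{i=0}^{m-1} \theta(x^{p^i})$. The vanishing $A_{m,2}(0) = 0$ is automatic since $\alpha_m(t)(\pi t/x)\big|_{t=0} = 0$. For $A_{m,1}(0) = 1$, I would compute the class of $\psi_x^m(\prod_i \theta(x^{p^i}))$ in the degenerate $t=0$ cohomology $K(\tilde b, \tilde b)/D_0 K(\tilde b, \tilde b; \ep)$ with $D_0 = x\partial_x + \pi x$; the reduction collapses the product to the class of $1$. For $A_{m,4}(0) = p^m$, I would apply the functional equation $p^m D_{t^{p^m}} \circ \alpha_m = \alpha_m \circ D_t$ to the basis vector $1$, equating the coefficient of $\pi t^{p^m}/x$ on both sides; the factor $p^m$ is precisely the one produced on the left-hand side by $D_{t^{p^m}}$ hitting the leading $t^{p^m}$-term. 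The nonvanishing $A_{m,3}(0) \neq 0$ reduces to computing the coefficient of $\pi t^{p^m}/x$ at the lowest $t$-order of $\psi_x^m(F_m(t,x))$, which traces back to $\theta_{p-1} = \pi^{p-1}/(p-1)! \neq 0$.

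The main obstacle is the careful tracking of $p$-adic growth through the projection step: the boundary piece lives in a space with growth index $\ep = b - 1/(p-1)$ strictly worse than the $V$-component. One must verify that the unique decomposition provided by Theorem \ref{T: rel dwork decomp} does not degrade the growth of the $A_{m,i}$ below the stated indices. This is the technical heart of Dwork's Lemma 3.2, and is where the choice $\tilde b = (p-1)/p$ becomes optimal: any larger value of $b$ would fail to preserve the space after the Frobenius and projection, while any smaller value would weaken the growth estimates needed downstream in Section \ref{SS: Frob}.
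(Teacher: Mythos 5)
First, note that the paper does not prove this statement at all: it is imported wholesale from Dwork (Theorem 2.2 and Lemma 3.2 of the cited Bessel paper), so your outline has to be judged as a reconstruction of Dwork's argument. Your overall architecture --- compute $\alpha_m$ on the representatives $1$ and $\pi t/x$, project with the decomposition $K_{p^m}(\tilde b/p^m,\tilde b;0)=V_{p^m}(\tilde b/p^m,\tilde b;0)\oplus D_{t^{p^m}}K_{p^m}(\tilde b/p^m,\tilde b;\varepsilon)$, and then specialize at $t=0$ --- is indeed the right one, but two of the steps you describe do not actually deliver the stated conclusions. For the growth indices, your rule ``$\rho$ is forced by requiring $A_{m,i}(t)\cdot v\in K_{p^m}(\tilde b/p^m,\tilde b;0)$'' only works for $A_{m,1}$ and $A_{m,3}$, the components of $\bar\alpha_m(1)$. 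For the second row it is both too weak and circular: a priori $\pi t/x\in K(\tilde b,\tilde b;\tfrac1{p-1}-\tilde b)$, so the naive mapping property only places $\alpha_m(\pi t/x)$ in level $\tfrac1{p-1}-\tilde b$, and even membership at level $0$ would only give $A_{m,2}\in L(\cdot;0)$ and $A_{m,4}\in L(\cdot;\tilde b-\tfrac1{p-1})$, which are strictly weaker than the claimed $\tfrac1{p-1}-\tfrac{\tilde b}{p}$ and $\tilde b-\tfrac{\tilde b}{p}$. The actual point (the content of Dwork's estimate) is that the extra factor $t/x$ is absorbed into the first-level factor $\theta(t/x)$, whose growth rate in $t/x$ is only $\tilde b/p$, so that $F_m(t,x)\cdot\tfrac{\pi t}{x}\in K(\tilde b/p^m,\tilde b/p^m;\tfrac1{p-1}-\tfrac{\tilde b}{p})$; one then needs the decomposition of Theorem \ref{T: rel dwork decomp} rescaled to that level (it is stated only at level $0$). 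You quote the right numbers but the mechanism you give does not produce them.

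The second gap is the determination of the values at $t=0$. Applying $p^mD_{t^{p^m}}\circ\alpha_m=\alpha_m\circ D_t$ to $1$ makes the left side a coboundary, so in cohomology it only yields the relation $\overline{\pi\,\alpha_m(x)}=\overline{\alpha_m(\pi t/x)}$; the prefactor $p^m$ does not survive into any coefficient comparison, and since the basis vector $\pi t^{p^m}/x$ vanishes at $t=0$ you cannot read off $A_{m,4}(0)$ from a $t=0$ specialization at all --- you need the first-order information in $t^{p^m}$ (or, more efficiently, reduce to $m=1$ via $\bar\alpha_m(t)=\bar\alpha_1(t^{p^{m-1}})\circ\cdots\circ\bar\alpha_1(t)$, so the matrix at $0$ is the $m$-th power of the $m=1$ matrix, and then determine that matrix, e.g.\ by a determinant/Frobenius-structure argument as in Dwork's Lemma 3.2). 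Likewise $A_{m,3}(0)\neq0$ does not follow from exhibiting one nonzero naive term (the lowest-order contribution is $\theta_p/\pi$, not $\theta_{p-1}$): the reduction modulo $D_{t^{p^m}}$ feeds further terms into that coefficient, and ruling out cancellation is exactly the nontrivial content of Dwork's lemma. So your proposal is a correct skeleton of the cited proof, but the refined second-row estimates and the constants $A_{m,3}(0)\neq0$, $A_{m,4}(0)=p^m$ --- the parts for which the paper leans on Dwork --- are not established by the arguments you sketch.
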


\section{$Sym^{\infty, \kappa}$-cohomology}\label{S: symmetric power cohom}

We now consider the meaning of taking the infinite symmetric power of the space $H^1_t$ from the previous section and how to construct a cohomology on it. Later, in Section \ref{SS: Frob} we will define and study a Frobenius map on these spaces. For now, we continue with the same restrictions (\ref{E: b's}) on $p$, $b$, and $b'$ from the beginning of Section \ref{S: rel bes}.

Let $\kappa \in \bb Z_p$. Denote by $\Omega[[t, w]]$ the formal power series ring in the $t$ and $w$. Intuitively, we will think of $\Omega[[t, w]]$ as the $\kappa$-symmetric power of the space $H_{t}^1$ with basis $\{1, \pi t/x \}$ by writing $w$ for the basis vector $\pi t / x$, and 1 for the other basis vector 1. Then we should intuitively view the monomial $w^m \in \Omega[[t, w]]$ as the $\kappa$-symmetric power vector $1^{\kappa - m} w^m$. This intuition will guide us through our definitions below.

In order to avoid certain denominators in $\kappa$, we will use a normalization $w^{(m)}$ of the monomial $w^m$. This is defined as follows. First, define the falling factorial $\kap{m}$:  for $\kappa \in \bb Z_p \setminus \bb Z_{\geq 0}$ and $m \geq 0$, the definition is conventional: 
\[
\kap{m} := \kappa( \kappa - 1) \cdots (\kappa - m +1),
\]
where $\kap{0} := 1$. For $\kappa = k \in \bb Z_{\geq 0}$, define
\[
k^{\u m} := 
\begin{cases}
1 & \text{if } m = 0 \\
k (k-1) \cdots (k-m+1) & \text{if } 0 < m \leq k \\
k(k-1) \cdots 2 \cdot 1 \cdot \hat 0 \cdot (-1) \cdot (-2) \cdots (k - m +1) & \text{if } m \geq k+1,
\end{cases}
\]
where $\hat 0$ means it is counted in the $m$ terms of the product, but omitted from the actual product. For example, $k^{\u k} = k^{\u{k+1}} = k!$ and $k^{\u{k+2}} = k! \cdot (-1)$; and $0^{\u 0} := 1$, $0^{\u 1} := \hat 0 = 1$, and $0^{\u 2} := -1$. We now define $\w{m} := \kap{m} w^m$. A more natural choice of basis (due to the definition of $[\bar \alpha_a]_\kappa$ below) is $\binom{\kappa}{m} w^m$; however, this leads to a non-integral cohomology theory, which in turn makes obtaining $p$-adic estimates difficult. As in the previous section, we need to limit our space to one with specific growth conditions. That is, define the subspaces of $\Omega[[t, w]]$:
\begin{align*}
S(b', \ep; \rho) &:= \left\{ \sum_{n, m \geq 0} A(n, m) t^n  \w{m} \mid A(n, m) \in \Omega, ord_p \> A(n, m) \geq 2b' n + \ep m + \rho \right\} \\
S(b', \ep) &:= \bigcup_{\rho \in \bb R} S(b', b; \rho).
\end{align*}
We now define a boundary map on this space by taking the infinite symmetric power of the Gauss-Manin connection. Recall, the Gauss-Manin connection for the Kloosterman family is the following. With
\begin{align*}
\partial :&= t \frac{\partial}{\partial t} + \frac{\pi t}{x} \\
&= e^{- \pi(x + t/x)} \circ t \frac{\partial}{\partial t} \circ e^{\pi(x + t/x)},
\end{align*}
observe that $\partial$ and $D_t$ commute, and thus $\partial$ acts on the relative cohomology space $H_t^1$. Its action may be computed explicitly: since
\begin{equation}\label{E: partials}
\partial(1) = \frac{\pi t}{x} \qquad \text{and} \qquad \partial( \frac{\pi t}{x} ) = \pi^2 t,
\end{equation}
we see that on $H_{t}^1$ with basis $\{1,  \pi t/x \}$ the map $\partial$ takes the matrix form (acting on row vectors)
\begin{equation}\label{E: diff of partial}
\partial = t \frac{d}{dt} + H, \qquad \text{where} \qquad H :=
\left(
\begin{array}{cc}
0  & 1   \\
\pi^2 t  & 0 
\end{array}
\right).
\end{equation}
This is the Gauss-Manin connection of the Kloosterman family. (The solutions of this family at 0 and $\infty$ are studied in detail by Dwork \cite{MR0387281}.) 

As mentioned earlier, if we intuitively set $w = \pi t/x$ then we may write (\ref{E: partials}) as $\partial(1) = w$ and $\partial(w) = \pi^2 t$. Viewing $w^m$ as  $1^{\kappa - m} w^m$ suggests we may define the $\kappa$-symmetric power of $\partial$ on $\Omega[[t, w]]$ by the product rule:
\begin{align*}
\partial_\kappa( w^m) &= \text{`` $\partial( 1^{\kappa - m} w^m ) $ ''} \\
&= (\kappa - m) \partial(1) w^m + m w^{m-1} \partial(w) \\
&= (\kappa - m) w^{m+1} + m \pi^2 t w^{m-1}.
\end{align*}
Adding in $t \frac{d}{dt}$ finishes the definition:
\[
\partial_\kappa(t^n w^m) := n t^n w^m + (\kappa-m) t^n w^{m+1} + m \pi^2 t^{n+1} w^{m-1}.
\]
Since we are using the normalized basis $\{w^{(m)}\}$, which modifies the coefficients, we restate the definition. Define the boundary map $\partial_\kappa: S(b', \ep) \rightarrow S(b', \ep)$ as follows. Let $m \geq 0$. For $\kappa \in \bb Z_p \setminus \bb Z_{\geq 0}$, define
\begin{equation}\label{E: boundary}
\partial_\kappa(t^n \w{m}) := n t^n \w{m} +  t^n \w{m+1} + m (\kappa- m +1) \pi^2 t^{n+1} \w{m-1}.
\end{equation}
Note that $\partial_\kappa S(b', \ep; \ep) \subset S(b', \ep; 0)$. For $\kappa = k \in \bb Z_{\geq 0}$, define $\partial_k$ as follows. For $0 \leq m \leq k-1$ or $m \geq k+2$, then $\partial_k(t^n \w{m})$ is defined by (\ref{E: boundary}). When $m = k$ or $m = k+1$, define
\begin{align*}
\partial_k(t^n \w{k}) &:= n t^n \w{k} + k \pi^2 t^{n+1} \w{k-1} \\
\partial_k(t^n \w{k+1}) &:= n t^n \w{k+1} + t^n \w{k+2} + (k+1) \pi^2 t^{n+1} \w{k}.
\end{align*}
Define the cohomology spaces
\[
H_{\kappa}^0(S(b', \ep)) := ker(\partial_\kappa) \quad \text{and} \quad H_{\kappa}^1(S(b', \ep)) := S(b', \ep) / \partial_\kappa S(b', \ep).
\]

It is useful to have a matrix version of $\partial_\kappa$, especially when showing $H_\kappa^0 = 0$ for almost all values of $\kappa$ (Theorem \ref{T: 0-homology}). To do this, we begin by writing the basis vectors $1$ and $\pi t/x$ in row vector form $(1,0)$ and $(0,1)$ so that $(1,0)H = (0,1)$ and $(0,1)H = (\pi^2 t, 0)$. We will abuse notation and write these as $H(1) = w$ and $H(\pi t/x) = \pi^2 t$. This defines the derivation (or Leibniz rule):
\begin{align}\label{E: Lie derivative}
\L{H}(w^m) &:= (\kappa - m) H(1) w^m + m w^{m-1} H(w)  \notag \\
&= (\kappa - m) w^{m+1} + m \pi^2 t w^{m-1}.
\end{align}
In terms of the normalized basis this means: for $\kappa \in \bb Z_p \setminus \bb Z_{\geq 0}$,
\[
\L{H}(w^{(m)}) = t^n \w{m+1} + m (\kappa- m +1) \pi^2 t^{n+1} \w{m-1}.
\]
Thus, $\partial_\kappa$ on $S(b', \ep)$ takes the form 
\[
\partial_\kappa = t \frac{d}{dt} + \L{H},
\]
an infinite differential system, where the matrix of $\L{H}$, acting on row vectors, takes the following form. For $\kappa \in \bb Z_p \setminus \bb Z_{\geq 0}$:
\[
\text{matrix of $\L{H}$} = 
\left(
\begin{array}{cccccc}
 0  & 1  &    &  &  & \\
 \kappa \pi^2 t & 0  & 1  &  & & \\
  & 2 (\kappa-1) \pi^2 t  & 0 & 1 &  &\\
   &  &  3 (\kappa - 2) \pi^2 t & 0 & & \ddots \\
   &  &  & \ddots & &
\end{array}
\right).
\]
For $\kappa = k \in \bb Z_{>0}$, the matrix takes the form
\[
\left(
\begin{array}{cccccccccccc}
0               & 1                   &                     &        &   &                     &                       &                          &                        &        &   &  \\
1 \cdot k \pi^2 t & 0                   & 1                   &        &   &                     &                       &                          &                        &        &   &  \\
                & 2 \cdot (k-1) \pi^2 t & 0                   & 1      &   &                     &                       &                          &                        &        &   &  \\
                &                     & 3 \cdot (k-2) \pi^2 t & 0      & \quad1 &                     &                       &                          &                        &        &   &  \\
                &                     &                     & \ddots &   & \ddots              &                      &                          &        \text{Note}                &        &   &  \\
                &                     &                     &        &   & (k-1) \cdot 2 \pi^2 t & 0                     & 1                        &        \downarrow                &        &   &  \\
                &                     &                     &        &   &                     & k \cdot 1 \cdot \pi^2 t & 0                        & 0                      &        &   &  \\
                &                     &                     &        &   &                     &                       & (k+1) \cdot \hat 0 \pi^2 t & 0                      & 1      &   &  \\
                &                     &                     &        &   &                     &                       &                          & (k+2) \cdot (-1) \pi^2 t & 0      & 1 &  \\
                &                     &                     &        &   &                     &                       &                          &                        & \ddots &   & 
\end{array}
\right).
\]
where $\hat 0$ means it is omitted. Note the zero in the $(k+2)$ column. For $\kappa = 0$, we have
\[
\text{matrix of $\c L_{0, H}$} = 
\left(
\begin{array}{cccccc}
0                        & 0                      &    0    &   &  & \\
                        & 2 \cdot \hat 0 \pi^2 t & 0                      & 1      &   &  \\
         &                          & 2 \cdot (-1) \pi^2 t & 0      & 1 &  \\
                &                     &                     &     \ddots   &   &     \ddots               
\end{array}
\right)
\]

\subsection{$H^0_\kappa$}

It follows almost immediately from this matrix description above that $H_\kappa^0 = 0$ for all $\kappa \in \bb Z_p \setminus \bb Z_{\geq 0}$ (see Theorem \ref{T: 0-homology} below). This also holds when $\kappa$ is a positive integer, but it is not immediate. When $\kappa$ a positive integer, then the infinite differential system breaks up into two systems, one finite and one infinite. The finite system will be precisely the $k$-th symmetric power of the Gauss-Manin connection $\partial = t \frac{d}{dt} + H$. A study of solutions of this latter system uses Robba's work on the symmetric powers of the Kloosterman family, and ultimately a deep result of Dwork's. That this finite system exists is interesting, and is likely analogously related to how classical modular forms of weight $k$ sit inside the space of $p$-adic modular forms of weight $k$. We also note that in the next section, when studying $H_\kappa^1$, we will encounter the same issue, that things are easier when $\kappa$ is not a positive integer.

\begin{theorem}\label{T: 0-homology}
Let $\kappa \in \bb Z_p$. Then
\[
H_{\kappa}^0(S(b', \ep)) = 
\begin{cases}
0 & \text{if } \kappa \in \bb Z_p \setminus \{ 0 \} \\
\Omega & \text{if } \kappa = 0.
\end{cases}
\]
\end{theorem}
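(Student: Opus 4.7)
The plan is to show $\ker(\partial_\kappa) = 0$ (or $\cong \Omega$ when $\kappa = 0$) by expanding an arbitrary $f = \sum_{n,m \geq 0} A(n,m) t^n \w{m} \in S(b', \ep)$ and extracting the coefficient of $t^N \w{M}$ from the equation $\partial_\kappa f = 0$. Using formula (\ref{E: boundary}), this gives a three-term recurrence relating $A(N,M)$, $A(N,M-1)$, and $A(N-1,M+1)$, and the strategic observation is that the $N = 0$ instances have only one potentially nonzero term, so they pin down $A(0,\cdot)$ outright; once this is done, induction on $N$ propagates the vanishing to every other coefficient.

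For $\kappa \in \bb Z_p \setminus \bb Z_{\geq 0}$ the recurrence (with the convention $A(\cdot,-1) = A(-1,\cdot) = 0$) is
\[
N A(N, M) + A(N, M-1) + (M+1)(\kappa - M) \pi^2 A(N-1, M+1) = 0,
\]
and at $N = 0, M \geq 1$ reduces to $A(0, M-1) = 0$, killing every $A(0,m)$. Induction on $N$ then collapses the recurrence to $N A(N,M) + A(N,M-1) = 0$, which starting from $A(N,-1) = 0$ forces every $A(N, m) = 0$; hence $H^0_\kappa = 0$. For $\kappa = 0$ the modified formula $\partial_0(t^n \w{0}) = n t^n \w{0}$ has no $\w{1}$ component, so the $(0,0)$-equation is vacuous and $A(0,0)$ is free; the remaining cascade still forces every other $A(N,m) = 0$, yielding $H^0_0 \cong \Omega$.

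The hard case is $\kappa = k \in \bb Z_{>0}$, which I would split using the fact that the matrix of $\L{H}$ has a zero entry in position $(k, k+1)$ -- precisely the exceptional normalization $k^{\u{k+1}}$ -- so the subspace $V_1 := \{f : A(n,m) = 0 \text{ for } m > k\}$ is $\partial_k$-stable. On the quotient $S(b', \ep)/V_1$ the argument of the previous paragraph applies almost verbatim to the index range $m \geq k+1$ (the only wrinkle is that at $M = k+1$ the $A(N,k)$ contribution has been killed by the quotient, which makes the recurrence even simpler), forcing $\ker(\partial_k) \subset V_1$. The remaining and main obstacle is to show $\partial_k$ has trivial kernel on $V_1 \cong L(b')^{k+1}$, where it acts as the $k$-th symmetric power of the Gauss-Manin connection $t\frac{d}{dt} + H$: a nonzero element of this kernel would be a global overconvergent horizontal section of the $k$-th symmetric power of the Bessel differential equation, and ruling this out is where I would invoke Robba's analysis of these symmetric-power connections together with the underlying Dwork theorem on $p$-adic Bessel solutions at the singular points. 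This last step is the only part of the argument that is not a formal manipulation of coefficients.
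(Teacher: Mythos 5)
Your proof is correct and takes essentially the same route as the paper: a formal coefficient recursion annihilates everything outside the span of $\w{0},\ldots,\w{k}$ (all of it when $\kappa\in\bb Z_p\setminus\bb Z_{\geq 0}$, with only the constant surviving when $\kappa=0$), and for $\kappa=k\in\bb Z_{>0}$ the kernel is forced into the $\partial_k$-stable finite block, where the operator is the $k$-th symmetric power system $t\frac{d}{dt}+H_k$ and Robba's theorem (resting on Dwork's result) excludes nonzero overconvergent solutions --- exactly the citation the paper uses, so your one non-formal step is the paper's as well. The differences are only organizational: you run the tail-killing as a double induction on the coefficients $A(N,M)$ and phrase the splitting via the stable subspace $V_1$ and its quotient, while the paper proves $t^m\mid\xi_n$ for all $m$ and splits the differential system into two blocks of equations.
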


\begin{proof}
We first suppose $\kappa \in \bb Z_p \setminus \bb Z_{\geq 0}$.  Let $\xi \in S(b', \ep)$ be such that $\partial_\kappa \xi = 0$. Writing $\xi = \sum_{n \geq 0} \xi_n \w{n}$, then $\partial_\kappa \xi = 0$ takes the form
\begin{alignat}{3}\label{E: kernel of partial}
  t \xi_0' & & &+ \kappa \pi^2 t \xi_1 &  &= 0 \notag\\
  t \xi_1' &+ \xi_0 &  &+ 2(\kappa -1)\pi^2 t \xi_2 &  &= 0 \\
  t \xi_2' &+ \xi_1 & &+ 3(\kappa - 2)\pi^2 t \xi_3 & &= 0 \notag \\ 
  &\vdots \qquad  \qquad \vdots& & & \vdots   & \notag
\end{alignat}
We will show $t^m \mid \xi_n$ for every $m \geq 1$ and $n \geq 0$ using induction, and thus $\xi = 0$. Observe that the second equation of (\ref{E: kernel of partial}) implies $t \mid \xi_0$, and the third equation implies $t \mid \xi_1$, and so forth: $t \mid \xi_n$ for $n \geq 0$. Next, suppose $t^m \mid \xi_n$ for every $n \geq 0$. The first equation of (\ref{E: kernel of partial}) then implies that $t^{m+1} \mid \xi_0$. Using this, the second equation of (\ref{E: kernel of partial}) shows $t^{m+1} \mid \xi_1$. Continuing, we see that $t^{m+1} \mid \xi_n$ for every $n \geq 0$. Hence, $\xi = 0$ as desired.

Suppose now that $\kappa \in \bb Z_{\geq 1}$. For convenience, set $k := \kappa$. We first observe that $\partial_\kappa \xi = 0$ breaks up into two systems of the form
\begin{alignat}{3}\label{E: kernel of partial-2a}
  t \xi_0' &  & &+ k \pi^2 t \xi_1 & &= 0 \notag\\
  t \xi_1' &+  \xi_0 & &+ 2(k-1)\pi^2 t \xi_2 & &= 0 \\
  t \xi_2' &+  \xi_1 & &+ 3(k-2)\pi^2 t \xi_3 & &= 0 \notag \\ 
  &\vdots \qquad  \qquad \vdots& & & \vdots   & \notag \\
  t \xi_{k+1}' &+ \xi_{k} & &+ (k+1)\pi^2 t \xi_{k+2} & &= 0 \notag   
\end{alignat}
and
\begin{alignat}{3}\label{E: kernel of partial-2b}
  t \xi_{k+2}' &   & &+ (k+2) \cdot (-1)\pi^2 t \xi_{k+3} & &= 0 \\
  t \xi_{k+3}' &+ \xi_{k+2} & &+ (k+3) \cdot (-2)\pi^2 t \xi_{k+4} & &= 0 \notag \\ 
  t \xi_{k+4}' &+ \xi_{k+3} & &+ (k+4) \cdot (-3)\pi^2 t \xi_{k+5} & &= 0 \notag \\ 
  &\vdots \qquad  \qquad \vdots& & & \vdots   & \notag
\end{alignat}
As (\ref{E: kernel of partial-2b}) is of a form essentially identical to (\ref{E: kernel of partial}), a similar argument shows $\xi_m = 0$ for every $m \geq k+2$. Thus, (\ref{E: kernel of partial-2a}) takes the form of the (finite dimensional) differential system $t \frac{d}{dt} + H_k$, where $H_k$ is the $(k+1) \times (k+1)$ matrix
\[
H_k = 
\left(
\begin{array}{cccccc}
 0  & 1  &    &  &    & \\
 k \pi^2 t & 0  & 1    & & & \\
  & 2(k-1) \pi^2 t  & 0 & 1   & &\\
   &  &  3(k-2)\pi^2 t & 0 &  \ddots &  \\
   &  &  & \ddots & & 1  \\
   & & & & k \pi^2 t & 0
\end{array}
\right)
\]
This is precisely the $k$-th symmetric power of the differential system (\ref{E: diff of partial}). Using a deep result of Dwork's, Robba \cite[p.202]{Robba-SymmetricPowersof-1986} shows this system has no overconvergent solutions.

Lastly, set $\kappa = 0$, then $\partial_\kappa \xi = 0$ takes the form
\begin{alignat}{3}
  t \xi_0' &  & &+  \pi^2 t \xi_1 & &= 0 \notag\\
  t \xi_1' & & &+ 2(-1)\pi^2 t \xi_2 & &= 0 \\
  t \xi_2' &+  \xi_1 & &+ 3(-2)\pi^2 t \xi_3 & &= 0 \notag \\ 
  t \xi_3' &+  \xi_2 & &+ 4(-3)\pi^2 t \xi_4 & &= 0 \notag \\ 
  &\vdots \qquad  \qquad \vdots& & & \vdots   & \notag    
\end{alignat}
Ignoring the first equation, this system is of a similar form to (\ref{E: kernel of partial}), and so a similar argument shows $\xi_m = 0$ for every $m \geq 1$. Consequently, the first equation now becomes $\xi_0' = 0$, and so $\xi_0$ is a constant, finishing the proof.
\end{proof}

\subsection{$H^1_\kappa$ with $\kappa \in \bb Z_p \setminus \bb Z_{\geq 0}$}

Next we study the first cohomology group $H_\kappa^1$, assuming throughout this section that $\kappa \in \bb Z_p \setminus \bb Z_{\geq 0}$. We have been unable so far to handle the case when $\kappa$ is a positive integer due to obstacles trying to obtain a result similar to Lemma \ref{L: Dwork decomp H}. Our goal of this section is to prove a similar decomposition result for $S(b', \ep)$ to that of $K(b', b)$ in Theorem \ref{T: rel dwork decomp}. 

We start by first noticing that $L(b'; \rho)$ and $L(b')$ sit naturally in $S(b', \ep)$ by sending $\xi \mapsto \xi$, the series with no $\w{m}$ terms. We will denote by $R(b')$ and $R(b'; \rho)$ the images of these spaces in $S(b', \ep)$. We will show that $H_{\kappa}^1(S(b', \ep))  \cong R(b')$. 

\begin{lemma}
Let $\kappa \in \bb Z_p \setminus \bb Z_{\geq 0}$. Then $R(b') \cap \partial_\kappa S(b', \ep) = \{0\}$.
\end{lemma}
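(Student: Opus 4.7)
The approach is to translate the condition $\partial_\kappa \eta \in R(b')$ (for $\eta \in S(b', \ep)$) into a system of coefficient recurrences and then use the $p$-adic growth condition on $\eta$ to force everything to vanish. Concretely, expand $\eta = \sum_{n, m \geq 0} A(n, m) t^n \w{m}$ and $\xi = \partial_\kappa \eta = \sum_{n \geq 0} c_n t^n$, and read off the coefficient of $t^n \w{m}$ in $\partial_\kappa \eta$ from (\ref{E: boundary}). This produces the $m = 0$ identity $n A(n, 0) + \kappa \pi^2 A(n-1, 1) = c_n$ and, for $m \geq 1$, the identity
\[
n A(n, m) + A(n, m-1) + (m+1)(\kappa - m) \pi^2 A(n-1, m+1) = 0,
\]
subject to the conventions $A(n, -1) = 0$ and $A(-1, m) = 0$.

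I would then induct on $n$ and show $A(n, m) = 0$ for every $m$, and consequently $c_n = 0$. The base case $n = 0$ should follow immediately: the $m \geq 1$ relations at $n = 0$ reduce to $A(0, m-1) = 0$ (the $A(-1, \cdot)$ term drops out), giving $A(0, j) = 0$ for all $j \geq 0$, and the $m = 0, n = 0$ identity collapses to $c_0 = 0$. For the inductive step, once $A(n-1, \cdot)$ vanishes, the $m \geq 1$ recurrence collapses to the two-term relation $n A(n, m) = - A(n, m-1)$, yielding the closed form
\[
A(n, m) = \left( -\tfrac{1}{n} \right)^m A(n, 0).
\]

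The key $p$-adic step is now to apply the growth condition defining $S(b', \ep)$: there exists some $\rho \in \bb R$ (depending only on $\eta$) with $ord_p A(n, m) \geq 2 b' n + \ep m + \rho$ for all $n, m$. Substituting the closed form above and rearranging, this forces
\[
ord_p A(n, 0) \geq (2 b' n + \rho) + m ( \ep + ord_p n)
\]
for every $m \geq 0$. Because $\ep > 0$ (this is where the choice $b > 1/(p-1)$ is essential) and $ord_p n \geq 0$ for $n \geq 1$, the right-hand side tends to $+\infty$ as $m \to \infty$, so $ord_p A(n, 0) = \infty$ and hence $A(n, 0) = 0$. Then $A(n, m) \equiv 0$, and the $m = 0$ identity at level $n$ reads $c_n = 0$, completing the induction and forcing $\xi = 0$.

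\textbf{Expected difficulty.} I do not anticipate a real obstacle: the recurrences are linear with zero right-hand side once the inductive hypothesis is in force, and the positivity of $\ep$ does all the $p$-adic work. The hypothesis $\kappa \in \bb Z_p \setminus \bb Z_{\geq 0}$ does not enter the vanishing argument through the factor $(m+1)(\kappa - m)$ (that factor is killed by induction); rather, it is baked into the very definition of $\partial_\kappa$ on $S(b', \ep)$ used here, which is why the analogous statement for positive-integer $\kappa$ is not in reach by this argument (the bifurcation of $\partial_k$ near $\w{k}, \w{k+1}$ breaks the clean two-term recursion above).
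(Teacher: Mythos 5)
Your proposal is correct and is essentially the paper's own argument: your closed form $A(n,m) = (-1/n)^m A(n,0)$ is exactly the paper's observation that $\xi_m = (-1)^m (1/n^{m+1})\, a_n t^n + O(t^{n+1})$, and both proofs conclude by pitting this non-decaying-in-$m$ coefficient against the $\ep m$ term in the growth condition (with $\ep > 0$, $ord_p n \geq 0$). The only difference is bookkeeping — you induct on $n$ showing $A(n,\cdot)=0$ and $c_n=0$ at the coefficient level, while the paper phrases the same induction as divisibility of $\eta$ and the $\xi_m$ by increasing powers of $t$.
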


\begin{proof}
Let $\eta \in R(b') \cap \partial_\kappa S(b', \ep)$. Write $\eta = \sum_{n \geq 0} a_n t^n$, and let $\xi \in S(b', \ep; \rho)$ such that $\partial_\kappa \xi = \eta$. Write $\xi = \sum_{n, m \geq 0} A(n, m) t^n \w{m}$ with $ord_p A(n, m) \geq 2b' n + \ep m + \rho$ for some $\rho \in \bb R$. Set $\xi_m := \sum_{n \geq 0} A(n, m) t^n$. Using this, $\xi = \sum_{m \geq 0} \xi_m \w{m}$. Using the $\{ \w{m} \}_{m \geq 0}$ as a basis of $S(b', \ep)$ as an $L(b')$-module, $\xi$ takes the vector form $(\xi_0, \xi_1, \ldots)$, and $\eta$ takes the form $( \sum a_n t^n, 0, 0, \ldots)$. Writing $\partial_\kappa = t \frac{d}{dt} + \c L_{\kappa, H}$ then $\partial_\kappa \xi = \eta$ is equivalent to the system
\[
\begin{cases}
\sum_{n \geq 0} a_n t^n &= t \xi_0' \quad \qquad \qquad + \kappa \pi^2 t \xi_1 \\
0 &= t \xi_1' +  \xi_0 \quad \qquad + 2 (\kappa-1)\pi^2 t \xi_2 \\
0 &= t \xi_2' +  \xi_1 \quad \qquad + 3 (\kappa-2) \pi^2 t \xi_3 \\
&\vdots
\end{cases}
\]
We will show by induction that $t^n \mid \eta$ for every $n$. Observe that the right-hand side of the first equation of the system is divisible by $t$, and thus $a_0 = 0$, or equivalently, $t$ divides $\eta$. Similarly, the second equation shows $t$ divides $\xi_0$. Continuing, we get that $t$ divides every $\xi_i$ for all $i \geq 0$.

Next, as $\kappa \pi^2 t \xi_1$ in the first equation is now divisible by $t^2$, the coefficient of $t$ in $\xi_0$ must equal $a_1$. Using this, from the second equation and the fact that $2(\kappa-1) \pi^2 t \xi_2$ is divisible by $t^2$, the coefficient of $t$ of $\xi_1$ must equal $- a_1$. The same argument using the third equation shows the coefficient of $t$ in $\xi_2$ equals $a_1$. Continuing the argument, we must have $\xi_m = (-1)^m a_1 t + O(t^2)$ for every $m \geq 0$, and so $A(1, m) = (-1)^m  a_1$. As $ord_p A(1, m) \geq 2b' + \ep m + \rho$ for every $m$, it must be that $a_1 = 0$.

Assume now that $t^n$ divides $\eta$ and every $\xi_m$. We proceed with an identical argument as above to show $a_n = 0$. As $\kappa \pi^2 t \xi_1$ in the first equation is now divisible by $t^{n+1}$, the coefficient of $t^n$ in $\xi_0$ must equal $(1/n) a_n$. Using this, from the second equation and the fact that $2 (\kappa-1) \pi^2 t \xi_2$ is divisible by $t^{n+1}$, the coefficient of $t^n$ of $\xi_1$ must equal $- (1/n^2) a_n$. The same argument using the third equation shows the coefficient of $t^n$ in $\xi_2$ equals $(1/n^3) a_n$. Continuing the argument, we must have $\xi_m = (-1)^m (1/n^{m+1}) a_n t^n + O(t^{n+1})$ for every $m \geq 0$. This means $ord_p( \frac{1}{n^{m+1}} a_n) \geq 2b' n + \ep m + \rho$ for every $m$, which is impossible unless $a_n = 0$. Thus, $\eta = 0$.
\end{proof}

\begin{lemma}\label{L: Dwork decomp H}
Let $\kappa \in \bb Z_p \setminus \bb Z_{\geq 0}$. Then
\begin{equation}\label{E: Dwork decomp H}
S(b', \ep; 0) = R(b'; 0) \oplus \L{H}S(b', \ep; \ep).
\end{equation}
\end{lemma}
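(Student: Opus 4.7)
The plan is to mimic the triangular reduction proof of Theorem~\ref{T: rel dwork decomp}, with $\L{H}$ on $S(b',\ep)$ playing the role of $D_{t^q}$ on $K_q$. Given any $\eta = \sum_{m \geq 0} \eta_m(t)\,\w{m} \in S(b',\ep;0)$, I will explicitly produce $\xi \in S(b',\ep;\ep)$ and $r \in R(b';0)$ with $\eta = r + \L{H}\xi$, and then show that the intersection $R(b';0) \cap \L{H}S(b',\ep;\ep)$ is trivial.

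\emph{Construction.} Applying $\L{H}$ coefficientwise in the normalized basis and re-indexing yields
\[
\L{H}\xi \;=\; \kappa\pi^{2} t\,\xi_{1}\,\w{0} \;+\; \sum_{m \geq 1}\bigl(\xi_{m-1} + c_{m}\,t\,\xi_{m+1}\bigr)\w{m},\qquad c_{m} := (m+1)(\kappa - m)\pi^{2}.
\]
Equating coefficients with $\eta - r$, where $r$ is supported on $\w{0}$, forces $r = \eta_{0} - \kappa\pi^{2}t\,\xi_{1}$ together with the recurrence $\xi_{m-1} = \eta_{m} - c_{m}\,t\,\xi_{m+1}$ for $m \geq 1$. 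Since $\kappa \notin \bb Z_{\geq 0}$ makes every $c_m$ nonzero, iterating in the even- and odd-indexed chains produces the closed form
\[
\xi_{m-1} \;=\; \sum_{j \geq 0}(-1)^{j}\Bigl(\prod_{i=0}^{j-1} c_{m+2i}\Bigr) t^{j}\,\eta_{m+2j}, \qquad m \geq 1,
\]
which defines $\xi$ formally in $\Omega[[t,w]]$.

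\emph{Growth bounds.} Because $ord_{p}\,\pi^{2} = 2/(p-1)$ and $ord_{p}[(m+2i+1)(\kappa-m-2i)] \geq 0$ (as $\kappa \in \bb Z_{p}$), the product of $c$'s has valuation at least $2j/(p-1)$. Combined with $ord_{p}\,\eta_{m+2j,n-j} \geq 2b'(n-j) + \ep(m+2j)$, each summand of $\xi_{m-1,n}$ has valuation at least
\[
2b'n + \ep m + 2j\Bigl(\tfrac{1}{p-1} + \ep - b'\Bigr) = 2b'n + \ep m + 2j(b - b').
\]
The hypothesis $b \geq b'$ from (\ref{E: b's}) makes this a uniform lower bound $\geq 2b'n + \ep(m-1) + \ep$, so the series converges and $\xi \in S(b',\ep;\ep)$. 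A near-identical estimate applied to $\kappa\pi^{2}t\,\xi_{1}$ shows $r = \eta_{0} - \kappa\pi^{2}t\,\xi_{1} \in L(b';0) = R(b';0)$.

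\emph{Direct sum and main obstacle.} Suppose $\L{H}\xi \in R(b';0)$ for some $\xi \in S(b',\ep;\ep)$. The recurrence above with all $\eta_{m} = 0$ ($m \geq 1$) collapses to $\xi_{m-1} = -c_{m}\,t\,\xi_{m+1}$. Iterating the even chain gives $\xi_{0} = (-1)^{j} c_{1}c_{3}\cdots c_{2j-1}\,t^{j}\,\xi_{2j}$ for every $j \geq 0$; since each $c_{2i-1}$ is a nonzero element of $\Omega$, this forces $\xi_{0} \in \bigcap_{j \geq 0} t^{j}\Omega[[t]] = \{0\}$. The odd chain similarly yields $\xi_{1} = 0$, after which the recurrence propagates to $\xi_{m} = 0$ for every $m$, so $r = \kappa\pi^{2}t\,\xi_{1} = 0$. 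The most delicate point of the argument is keeping the valuation estimates uniform in $j$: the $\pi^{2}$ in each $c_{m}$ contributes exactly $2/(p-1)$, which, together with the identity $\ep = b - 1/(p-1)$, cancels the deficit $-2b'j$ from shifting $t$-indices precisely when $b \geq b'$. The restriction $\kappa \notin \bb Z_{\geq 0}$ (which keeps every $c_{m}$ nonzero and thereby drives both the existence recurrence and the uniqueness step) matches exactly the obstacle flagged in the section preamble for the positive-integer case.
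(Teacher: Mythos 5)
Your proposal is correct and follows essentially the same route as the paper: both exploit the relation $\L{H}\w{m} = \w{m+1} + m(\kappa-m+1)\pi^2 t\,\w{m-1}$ to reduce everything of positive $w$-degree modulo the image of $\L{H}$, with the $\pi^2 t$ factors (via $\ep = b - \tfrac{1}{p-1}$ and $b \geq b'$) supplying exactly the valuation gain needed to stay in $S(b',\ep;\ep)$ and land the remainder in $R(b';0)$; you merely organize it as a forward recurrence for the coefficient functions rather than as explicit formulas for each $\w{m}$. Your explicit verification that $R(b';0)\cap \L{H}S(b',\ep;\ep)=\{0\}$ is a welcome addition, since the paper's proof of this lemma leaves that step implicit (an analogous argument appears only later, in Lemma \ref{L: V and L_H complement}).
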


\begin{proof}
We first observe that the righthand side of (\ref{E: Dwork decomp H}) is contained in the left. We now show the reverse direction. First, observe that 
\[
\w{m+1}  = - m (\kappa- m +1) \pi^2 t \w{m-1} + \L{H}\w{m}.
\]
Using this recursively, it follows that: 
\begin{align*}
&\text{for $m \geq 0$:} \qquad  \w{2m+1} = \L{H} \left( \sum_{l=0}^m \zeta_{2m+1, l} \pi^{2l} t^l \w{2(m-l)} \right) \\
&\text{for $m \geq 1$:} \qquad  \w{2m} = \eta_{2m, 0} \pi^{2m} t^m + \L{H} \left( \sum_{l=0}^{m-1} \zeta_{2m, l} \pi^{2l} t^l \w{2(m-l)-1} \right),
\end{align*}
where $\eta_0$ and $\zeta_l$ are elements in $\bb Z_p$. The result follows easily from this. For future reference we record:
\begin{align*}
\eta_{2m, 0} & = 2^{2m} (\kappa/2)_m (-1/2)_m \\
\zeta_{2m, l} &:= 2^{2l} (m- \frac{1}{2})_l (-\frac{\kappa}{2} +m+1)_l \\
\zeta_{2m+1, l} &:= 2^{2l} (m)_l (\frac{\kappa + 1}{2} + m)_l.
\end{align*}
\end{proof}

\begin{theorem}
Let $\kappa \in \bb Z_p \setminus \bb Z_{\geq 0}$. Then
\begin{equation}\label{E: Dwork decomp}
S(b', \ep; 0) = R(b'; 0) \oplus \partial_\kappa S(b', \ep; \ep).
\end{equation}
Hence, $H_{\kappa}^1(S(b', \ep))  \cong R(b')$.
\end{theorem}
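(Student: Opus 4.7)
The plan is to upgrade the $\L{H}$-decomposition of Lemma~\ref{L: Dwork decomp H} into a $\partial_\kappa$-decomposition by a telescoping iteration. The key mechanism is that the missing piece $\partial_\kappa - \L{H} = t\frac{d}{dt}$ acts on a monomial $t^n\w{m}$ by the scalar $n\in\bb Z_p$, hence preserves each growth stratum $S(b',\ep;\rho)$ without loss, while each application of Lemma~\ref{L: Dwork decomp H} yields a strict gain of $\ep > 0$ in growth.

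First, by rescaling coefficients, Lemma~\ref{L: Dwork decomp H} is equivalent to the family of statements
\[
S(b',\ep;\rho) \;=\; R(b';\rho) \,\oplus\, \L{H}\, S(b',\ep;\rho+\ep), \qquad \rho \in \bb R.
\]
Given $\xi \in S(b',\ep;0)$, set $\xi_0 := \xi$ and iteratively define $r_i\in R(b';i\ep)$, $\eta_i \in S(b',\ep;(i+1)\ep)$, and $\xi_{i+1}:= t\frac{d}{dt}\eta_i \in S(b',\ep;(i+1)\ep)$ by
\[
\xi_i \;=\; r_i + \L{H}\eta_i \;=\; r_i + \partial_\kappa \eta_i - \xi_{i+1}.
\]
Telescoping yields, for every $N \geq 1$,
\[
\xi \;=\; \sum_{i=0}^{N-1}(-1)^i r_i \;+\; \partial_\kappa\Bigl(\sum_{i=0}^{N-1}(-1)^i \eta_i\Bigr) \;+\; (-1)^N \xi_N.
\]

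Because $\ep > 0$, all growth floors march to $+\infty$: $\sum_{i\geq 0}(-1)^i r_i$ converges $p$-adically to some $r\in R(b';0)$, $\sum_{i\geq 0}(-1)^i \eta_i$ converges to some $\eta\in S(b',\ep;\ep)$, and $\xi_N \to 0$. Hence $\xi = r + \partial_\kappa \eta$, which together with the injectivity statement $R(b') \cap \partial_\kappa S(b',\ep) = \{0\}$ established in the preceding lemma proves~\eqref{E: Dwork decomp}. The same iteration with shifted starting index gives $S(b',\ep;\rho) = R(b';\rho) \oplus \partial_\kappa S(b',\ep;\rho+\ep)$ for every $\rho$; taking the union over $\rho$ yields $S(b',\ep) = R(b') \oplus \partial_\kappa S(b',\ep)$, so that $H^1_\kappa(S(b',\ep)) \cong R(b')$.

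There is no serious obstacle; everything is controlled by the positive gap $\ep$ at each step. The only items that merit explicit verification are the rescaled form of Lemma~\ref{L: Dwork decomp H} for arbitrary $\rho$, which is immediate from the explicit formulas for $\eta_{2m,0}$, $\zeta_{2m,l}$, $\zeta_{2m+1,l}$ recorded at the end of its proof, and the boundedness of $t\frac{d}{dt}$ on each growth stratum, trivial since $ord_p(n) \geq 0$.
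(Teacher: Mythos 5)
Your proof is correct and is essentially the paper's own argument: apply Lemma \ref{L: Dwork decomp H} at each growth level $i\ep$, rewrite $\L{H}\eta_i = \partial_\kappa \eta_i - t\frac{d}{dt}\eta_i$, iterate, and conclude by $p$-adic convergence from the $\ep$-gain per step, with directness supplied by the preceding lemma $R(b') \cap \partial_\kappa S(b', \ep) = \{0\}$ exactly as you invoke it. The only cosmetic difference is that you carry the $t\frac{d}{dt}$ error with explicit alternating signs, whereas the paper absorbs the sign into its definition of the next remainder.
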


\begin{proof}
First, observe that the righthand side of (\ref{E: Dwork decomp}) is contained in the left. Let $\xi \in S(b', \ep; 0)$. By Lemma \ref{L: Dwork decomp H},  there exists $\eta_0 \in R(b'; 0)$ and $\zeta_0 \in S(b', \ep; \ep)$ such that $\xi = \eta_0 + \L{H}\zeta_0$. Setting $\xi_1 := - t\frac{d}{dt} \zeta_0$, then $\xi = \eta_0 + \partial_\kappa \zeta_0 + \xi_1$. As $\xi_1 \in S(b', \ep; \ep)$, which is an increase by $\ep$ in valuation, we may repeat this process to obtain $\xi = \sum_{m \geq 0} \eta_m + \partial_\kappa \sum_{m \geq 0} \zeta_m$, with $\eta_m \in S(b', \ep; \ep m)$ and $\zeta_m \in S(b', \ep; \ep(m+1))$. 
\end{proof}

\subsection{Frobenius}\label{SS: Frob}

Now that we have completed our study of the cohomology spaces $H_{\kappa}^0$ and $H_\kappa^1$, we move on to the study of the action of Frobenius. First we need to make sense of taking the (infinite) $\kappa$-symmetric power of the relative Frobenius $\alpha_a$ on $S(b', \ep)$. As before, intuitively, if we view $w^m$ as the $\kappa$-symmetric power vector $1^{\kappa - m} w^m$, then the $\kappa$-symmetric power of $\alpha_a$ should act on this by $(\alpha_a(1))^{\kappa - m} \cdot \alpha_a(w)$. Thus, we need to make sure $(\alpha_a(1))^{\kappa - m}$ is well-defined. Furthermore, we need to show such a map is well-defined on the space $S(b', \ep)$. Unfortunately, it is not, unless we make one further restriction on $b$. This is done now. 

We first show that it makes sense to write $(\alpha_a(1))^{\kappa - m}$. By Theorem \ref{T: Dwork rel cohom}, $\bar \alpha_a(1) \in L(b'/q; 0) + L(b'/q; \ep) \frac{\pi t^q}{x}$. Furthermore, $\bar \alpha_a(1) = 1 + \eta + \zeta \frac{\pi t^q}{x}$ with $\eta \in L(b'/q; 0)$, $t \mid \eta$, and $\zeta \in L(b'/q; \ep)$. Define $\Upsilon_q: H_{t^q}^1(K_q(b'/q, b)) \rightarrow \Omega[[t, w]]$  by sending $\zeta + \xi \frac{\pi t^q}{x} \longmapsto \zeta + \xi w$.  Thus, for $\tau \in \bb Z_p$, $( \Upsilon_q \circ \bar \alpha_a(1) )^\tau$ is a well-defined element of $\Omega[[t, w]]$. 

We now define the $\kappa$-symmetric power of $\alpha_a$ as follows: define $[\bar \alpha_a]_\kappa: S(b', \ep) \rightarrow S(b' / q, \ep)$ by linearly extending over $\Omega[[t]]$ the action
\[
[\bar \alpha_a]_\kappa( \w{m} ) := \kappa^{\u{m}} \cdot ( \Upsilon_q \circ \bar \alpha_a(1) )^{\kappa - m} (\Upsilon_q \circ \bar \alpha_a \frac{\pi t}{x} )^m.
\]
Due to the weighted basis $\{ w^{(m)} \}$, it is not immediate that this is well-defined, and in fact may not be without further conditions on $b$. We state this now. Recall, $\tilde b = (p-1)/p$. Assume that
\begin{equation}\label{E: b restrictions}
\tilde b - \frac{1}{p-1} \geq b  > \frac{1}{p-1} \qquad \text{and} \qquad b \geq b'.
\end{equation}
Note, $p \geq 5$ implies $\tilde b > 2/(p-1)$ and so (\ref{E: b restrictions}) is not vacuous. 

\begin{lemma}\label{L: well-defined}
Assuming (\ref{E: b restrictions}) then $[\bar \alpha_a]_\kappa: S(b', \ep) \rightarrow S(b' / q, \ep)$ is well-defined.
\end{lemma}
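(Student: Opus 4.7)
My plan is to expand the defining formula for $[\bar\alpha_a]_\kappa(\w m)$ via Newton's binomial series, extract the coefficient of each normalized monomial $t^n \w k$ in terms of the four series $A_{a,i}$ from Theorem~\ref{T: Dwork rel cohom}, and bound its $p$-adic valuation. The restriction (\ref{E: b restrictions}) should drop out of a single linear inequality that balances the $l$-th power of $A_{a,3}$ against the $1/l!$ loss coming from the generalized binomial series.

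The first step is to verify that $[\bar\alpha_a]_\kappa(\w m)$ even lies in $\Omega[[t,w]]$ as a formal power series. Since $A_{a,1}(0) = 1$ (Theorem~\ref{T: Dwork rel cohom}), I may write $A_{a,1} = 1 + \eta$ with $t \mid \eta$, so that $A_{a,1}^{\kappa - m - l} = \sum_r \binom{\kappa - m - l}{r} \eta^r$ converges coefficient-wise, and
\[
(A_{a,1} + A_{a,3} w)^{\kappa - m} = \sum_{l \geq 0} \binom{\kappa - m}{l} A_{a,1}^{\kappa - m - l} A_{a,3}^l w^l
\]
is a well-defined element of $\Omega[[t,w]]$. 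Multiplying by the polynomial expansion of $(A_{a,2} + A_{a,4} w)^m$, applying the identity $\kappa^{\u m} \binom{\kappa - m}{l} = \kap{m+l}/l!$, and passing to the normalized basis via $\w k = \kap{k} w^k$, the coefficient of $t^n \w k$ in $[\bar\alpha_a]_\kappa(\w m)$ becomes a finite sum over pairs $(l,j)$ with $l + j = k$ and $0 \leq j \leq m$ whose main ingredients are $(1/l!)\binom{m}{j}$, the $\bb Z_p$-integer $\kap{m+l}/\kap{k} = \prod_{i = l+j}^{m+l-1}(\kappa - i)$, and the coefficient of $t^n$ in the product $A_{a,1}^{\kappa-m-l} A_{a,3}^l A_{a,2}^{m-j} A_{a,4}^j$.

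The next step is the valuation estimate. From the growth shifts in Theorem~\ref{T: Dwork rel cohom} together with the fact that $A_{a,1}^{\kappa-m-l} \in L(\tilde b/q; 0)$, this last product lies in $L\bigl(\tilde b/q;\,\rho_{l,j,m}\bigr)$ with
\[
\rho_{l,j,m} = l\bigl(\tilde b - \tfrac{1}{p-1}\bigr) + (m-j)\bigl(\tfrac{1}{p-1} - \tfrac{\tilde b}{p}\bigr) + j\bigl(\tilde b - \tfrac{\tilde b}{p}\bigr).
\]
Combining with the elementary bound $ord_p(1/l!) \geq -l/(p-1)$ and using $\tilde b \geq b'$ to absorb the $n$-dependence into $2(b'/q)n$, membership of $[\bar\alpha_a]_\kappa(\w m)$ in $S(b'/q, \ep; 0)$ reduces to the single inequality
\[
l\bigl(\tilde b - \tfrac{2}{p-1} - \ep\bigr) + j\bigl(\tilde b - \tfrac{1}{p-1} - \ep\bigr) + m\bigl(\tfrac{1}{p-1} - \tfrac{\tilde b}{p}\bigr) \geq 0 \qquad (l, j, m \geq 0).
\]
Substituting $\ep = b - 1/(p-1)$, the three coefficients simplify respectively to $\tilde b - 1/(p-1) - b$, $\tilde b - b$, and $(2p-1)/(p^2(p-1))$. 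The first is $\geq 0$ precisely by the strengthened hypothesis (\ref{E: b restrictions}); the second is then automatically nonnegative; the third is always positive. This is the one place the precise restriction on $b$ is used.

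To finish, I would extend $\Omega[[t]]$-linearly and verify that for $\xi = \sum A(n,m) t^n \w m \in S(b', \ep; \rho)$ the sum $\sum A(n,m) t^n [\bar\alpha_a]_\kappa(\w m)$ lies in $S(b'/q, \ep; \rho)$. The $t^n$-translation is handled by the elementary bound $2b'n \geq 2(b'/q) n$ (since $q \geq 1$), and convergence in $m$ is immediate from $\ep > 0$. The main obstacle of the argument is locating the correct combinatorial rewriting in the second step: only after the identity $\kappa^{\u m}\binom{\kappa-m}{l} = \kap{m+l}/l!$ is applied do the remaining $\kap{m+l}/\kap{k}$ factors become $p$-adic integers and the $-l/(p-1)$ loss from $1/l!$ get exactly compensated by the extra $1/(p-1)$ in the growth shift of $A_{a,3}$ provided by the hypothesis $\tilde b - 1/(p-1) \geq b$.
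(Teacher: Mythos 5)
Your proposal is correct and follows essentially the same route as the paper: expand $(A_{a,1}+A_{a,3}w)^{\kappa-m}(A_{a,2}+A_{a,4}w)^m$ by the generalized binomial series, observe that the combinatorial factor lies in $\frac{1}{l!}\bb Z_p$ (costing at most $l/(p-1)$), and combine the growth shifts of Theorem \ref{T: Dwork rel cohom} into the linear inequality in $(l,j,m)$ whose coefficient of $l$ is exactly $\tilde b - \frac{1}{p-1} - b \geq 0$, i.e.\ hypothesis (\ref{E: b restrictions}). The only cosmetic difference is that you expand with $A_{a,1}^{\kappa-m-l}A_{a,3}^l$ directly where the paper factors out $A_{a,1}^{\kappa-m}$ and uses $\tilde A_{a,3} = A_{a,3}/A_{a,1}$, which is equivalent.
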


\begin{proof}
We now make use of the estimates from Theorem \ref{T: Dwork rel cohom}. Using the notation from that theorem, set $\tilde A_{a, 3} := A_{a, 3} / A_{a, 1}$. Then
\begin{align*}
[\bar \alpha_a]_\kappa(w^{(m)}) &= \kappa^{\u m} ( A_{a, 1} + A_{a, 3} w)^{\kappa - m} (A_{a, 2} + A_{a, 4} w)^m \\
&= \kappa^{\u m} A_{a, 1}^{\kappa - m} ( 1 + \tilde A_{a, 3} w)^{\kappa - m} (A_{a, 2} + A_{a, 4} w)^m \\
&= \kappa^{\u m} A_{a, 1}^{\kappa - m}  \left( \sum_{l=0}^\infty \binom{\kappa-m}{l} \tilde A_{a, 3}^l w^l \right) \left( \sum_{n=0}^m \binom{m}{n} A_{a, 2}^{m-n} A_{a, 4}^n w^n \right) \\
&= \sum_{l \geq 0, 0 \leq n \leq m} C(l, n) w^{(l+n)},
\end{align*}
where
\[
C(l, n) := A_{a, 1}^{\kappa - m} \tilde A_{a, 3}^l A_{a, 2}^{m-n} A_{a, 4}^n \binom{m}{n} \binom{\kappa-m}{l} \frac{\kappa^{\u m}}{\kappa^{\u {l+n}}}.
\]
Observe that
\[
\binom{m}{n} \binom{\kappa-m}{l} \frac{\kappa^{\u m}}{\kappa^{\u {l+n}}} \in \frac{1}{l!} \bb Z_p,
\]
and so its $p$-adic valuation is at least $- l / (p-1)$. Using the estimates from Theorem \ref{T: Dwork rel cohom}, we see that $A_{a, 1}^{\kappa - m} \tilde A_{a, 3}^l A_{a, 2}^{m-n} A_{a, 4}^n \in L(b'/q; c)$, where
\[
c := l ( \tilde b - \frac{1}{p-1}) + (m-n)(\frac{1}{p-1} - \frac{\tilde b}{p}) + n(\tilde b - \frac{\tilde b}{p}).
\]
By the hypothesis on $b$ and $b'$ in (\ref{E: b restrictions}), it follows that $C(l, n) \in L(b'/q; \rho_{l, n})$, where
\[
\rho_{l, n} = (l+n) \ep + n(\tilde b - b) + m( \frac{1}{p-1} - \frac{\tilde b}{p} ).
\]
\end{proof}

We may now define the Frobenius operator $\beta_\kappa$. First, define the (geometric Frobenius) operator $\psi_t: S(b'/p, \ep) \rightarrow S(b', \ep)$ by 
\[
\psi_t: \quad \sum_{n, m \geq 0} A(n, m)  t^n w^{(m)} \longmapsto \sum_{n, m \geq 0} A(p n, m) t^n w^{(m)},
\]
and set
\[
\beta_\kappa := \psi_t^a \circ [\bar \alpha_a]_\kappa: S(b', \ep) \rightarrow S(b', \ep).
\]
This is a completely continuous operator, and so its Fredholm determinant is $p$-adic entire. The Dwork trace formula now proves the following (see Section \ref{SS: dwork trace proof} for the proof):

\begin{theorem}\label{T: dwork trace}
$L(\text{Sym}^{\infty, \kappa} Kl, T) = det(1 - \beta_\kappa T \mid S(b', \ep))^{\delta_q}$ where $\delta_q$ sends any function $g(T)$ to $g(T)/g(qT)$.
\end{theorem}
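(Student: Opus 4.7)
The plan is to identify the two $p$-adic entire functions by matching their logarithmic series expansions in $T$. Setting $P(T) := det(1 - \beta_\kappa T \mid S(b',\ep))$, a direct computation yields
\[
\log \frac{P(T)}{P(qT)} \;=\; \sum_{s \geq 1} \frac{T^s}{s} (q^s - 1)\, \mathrm{Tr}(\beta_\kappa^s \mid S(b',\ep)),
\]
so the theorem reduces to showing, for each $s \geq 1$, the identity
\[
(q^s - 1)\, \mathrm{Tr}(\beta_\kappa^s \mid S(b',\ep)) \;=\; \sum_{\bar t \in \bb F_{q^s}^*} \; \sum_{m \geq 0} \pi_0(\bar t)^{(s/d)(\kappa - m)}\, \pi_1(\bar t)^{(s/d) m},
\]
where $d := deg(\bar t)$ divides $s$. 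The right side arises by expanding the Euler product defining $L(Sym^{\infty,\kappa} Kl, T)$, taking logarithms, and regrouping each closed point $\bar t$ of degree $d \mid s$ as its $d$ Galois conjugates in $\bb F_{q^s}^*$.

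For the left side, I would apply the Dwork--Serre $p$-adic trace formula to the completely continuous operator $\beta_\kappa = \psi_t^a \circ [\bar\alpha_a]_\kappa$. Using the composition law $[\bar\alpha_{as}]_\kappa(\hat t) = [\bar\alpha_a]_\kappa(\hat t^{q^{s-1}}) \circ \cdots \circ [\bar\alpha_a]_\kappa(\hat t)$ inherited from the analogous identity for $\bar\alpha_m$ of Section \ref{S: rel bes}, iteration gives
\[
(q^s - 1)\, \mathrm{Tr}(\beta_\kappa^s \mid S(b',\ep)) \;=\; \sum_{\hat t :\, \hat t^{q^s} = \hat t} \mathrm{Tr}\bigl([\bar\alpha_{as}]_\kappa(\hat t) \bigm| \text{fiber}_{\hat t}\bigr),
\]
the outer sum being over Teichm\"uller lifts of $\bb F_{q^s}$ and the inner trace on the $p$-adic Banach space of series in the $\w{m}$ with the $\ep$-growth condition. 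To evaluate each fiber trace I would invoke Theorem \ref{T: Dwork rel cohom}: the action of $\bar\alpha_{as}(\hat t)$ on the rank-two module $H^1_{\hat t}$ with basis $\{1, \pi \hat t / x\}$ has characteristic polynomial equal to the Kloosterman local $L$-factor of $Kl_{\bar t}$ over $\bb F_{q^s}$, hence eigenvalues $\pi_0(\bar t)^{s/d}$ and $\pi_1(\bar t)^{s/d}$ (for $\bar t \neq 0$). The definition of $[\bar\alpha_{as}]_\kappa$ via the product rule $w^m \leftrightarrow 1^{\kappa - m} w^m$ then diagonalizes the fiber operator on $\mathrm{Span}\{\w{m}\}_{m \geq 0}$ with eigenvalues $\pi_0(\bar t)^{(s/d)(\kappa - m)} \pi_1(\bar t)^{(s/d) m}$, and summing the resulting geometric series (which converges $p$-adically because $\pi_0(\bar t)$ is a $1$-unit while $|\pi_1(\bar t)/\pi_0(\bar t)|_p < 1$) reproduces the inner summand in the target identity.

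The main obstacle will be the rigorous setup of the Dwork--Serre trace formula on the two-variable Banach space $S(b',\ep)$: one must exhibit an orthonormal basis (the natural candidates being $\{t^n \w{m}\}$ appropriately $\pi$-weighted), use estimates on the matrix entries of $[\bar\alpha_a]_\kappa$ analogous to the bounds on the $C(l,n)$ from the proof of Lemma \ref{L: well-defined} to verify nuclearity, and carry out the kernel computation that factors the global trace into fiber-wise traces at Teichm\"uller points. Additional attention is needed at the degenerate fiber $\hat t = 0$, where the basis $\{1, \pi t/x\}$ of $H^1_t$ collapses; this point should produce an explicit factor that matches the known zero of $L$ at $T = 1$ promised in Theorem \ref{T: main theme}. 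A secondary technicality is that when $\kappa$ is a non-negative integer the operator $[\bar\alpha_a]_\kappa$ develops zeros through the falling factorial $\kap{m}$, but because the trace formula applies on the full Banach space $S(b',\ep)$ rather than on the cohomology decomposition, the diagonalization and geometric summation remain valid uniformly in $\kappa \in \bb Z_p$.
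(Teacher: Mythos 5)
Your proposal is correct and follows essentially the paper's own route (Section \ref{SS: dwork trace proof}): Dwork's trace formula, applied to the matrix of $\beta_\kappa = \psi_t^a \circ [\bar\alpha_a]_\kappa$ in the basis $\{t^n \w{m}\}$, reduces $(q^s-1)\,\mathrm{Tr}(\beta_\kappa^s)$ to traces of $[\bar\alpha_{\hat t}]_\kappa$ on the fiber spaces $S_{\hat t}(\ep)$ at Teichm\"uller points, and each fiber Fredholm determinant equals the local Euler factor $\prod_{m\geq 0}(1-\pi_0(\bar t)^{\kappa-m}\pi_1(\bar t)^m T)$ as in (\ref{E: local factor}), with convergence coming from $|\pi_1(\bar t)/\pi_0(\bar t)|_p<1$. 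Two small corrections to your sketch: the $(q^s-1)$-weighted trace formula on $\bb G_m$ sums only over Teichm\"uller lifts of $\bb F_{q^s}^{*}$, so no degenerate fiber at $\hat t=0$ enters and no factor from it is needed to explain the zero at $T=1$ (that zero comes from Robba's theorem and $p$-adic continuity, Theorem \ref{T: T=1 root}); and on each fiber the Frobenius is in general only triangularizable, not diagonal in the basis $\{1,\pi\hat t/x\}$, which is precisely the verification the paper delegates to the analogue of the cited result used to prove (\ref{E: local factor}).
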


We now rewrite this using the cohomology theory introduced in Section \ref{S: symmetric power cohom}. First, we note that:

\begin{lemma}\label{L: partial beta}
$q \partial_\kappa \circ \beta_\kappa = \beta_\kappa \circ \partial_\kappa$.
\end{lemma}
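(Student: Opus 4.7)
The plan is to factor $\beta_\kappa = \psi_t^a \circ [\bar\alpha_a]_\kappa$ and reduce the identity to two intermediate commutations. Introduce the target-side Gauss--Manin $\partial^{(q)} := t^q \partial_{t^q} + \pi t^q/x$ acting on $H^1_{t^q}$, together with its $\kappa$-symmetric power $\partial^{(q)}_\kappa := t^q \partial_{t^q} + \mathcal{L}_{\kappa, H_{t^q}}$ acting on $S(b'/q, \ep)$. The lemma will follow from the two sub-commutations
\[
\mathrm{(i)}\ \ q\, \partial^{(q)}_\kappa \circ [\bar\alpha_a]_\kappa = [\bar\alpha_a]_\kappa \circ \partial_\kappa, \qquad \mathrm{(ii)}\ \ \psi_t^a \circ \partial^{(q)}_\kappa = \partial_\kappa \circ \psi_t^a,
\]
since then $q \partial_\kappa \beta_\kappa = q \partial_\kappa \psi_t^a [\bar\alpha_a]_\kappa = \psi_t^a (q \partial^{(q)}_\kappa) [\bar\alpha_a]_\kappa = \psi_t^a [\bar\alpha_a]_\kappa \partial_\kappa = \beta_\kappa \partial_\kappa$.

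For part (ii), I would split each connection into its derivation and matrix parts. On a monomial $t^n$, $t^q \partial_{t^q}$ acts as multiplication by $n/q$ (formally, $t^q\partial_{t^q} = \tfrac{1}{q} t\partial_t$), while $\psi_t^a$ selects the $q$-divisible exponents; a direct computation on $t^n$ gives $\psi_t^a \circ t^q \partial_{t^q} = t\partial_t \circ \psi_t^a$. For the matrix part, $\psi_t^a \circ \mathcal{L}_{\kappa, H_{t^q}} = \mathcal{L}_{\kappa, H_t} \circ \psi_t^a$ because $\psi_t^a$ is exactly the inverse of the substitution $t \mapsto t^q$ that distinguishes $H_{t^q}$ from $H_t$ (the only difference between $\mathcal{L}_{\kappa, H_t}$ and $\mathcal{L}_{\kappa, H_{t^q}}$ being $t$ versus $t^q$ in the off-diagonal coefficient).

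For part (i), first establish the rank-two identity $q \partial^{(q)} \circ \bar\alpha_a = \bar\alpha_a \circ \partial$ on $H^1_t$. In the bases of Theorem~\ref{T: Dwork rel cohom} this unpacks into four scalar identities in the matrix entries $A_{a,i}(t)$, for instance $A_{a,2} = t\partial_t A_{a,1} + q\pi^2 t^q A_{a,3}$, which follow from the explicit form $\alpha_a = \psi_x^a \circ F_a(t,x)$ together with the Dwork commutation $q D_{t^q} \circ \alpha_a = \alpha_a \circ D_t$ recalled in Section~\ref{S: rel bes}; the factor of $q$ arises from the chain rule $t\partial_t = q \cdot t^q \partial_{t^q}$. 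To lift to symmetric powers, apply $\partial^{(q)}$ via the Leibniz rule to the symbolic expression $\bar\alpha_a(1)^{\kappa-m} \bar\alpha_a(\pi t/x)^m$ that represents $[\bar\alpha_a]_\kappa(w^{(m)})$; using the rank-two identity together with $\partial(1) = \pi t/x$ and $\partial(\pi t/x) = \pi^2 t$, this collapses to
\[
q\, \partial^{(q)}_\kappa\bigl([\bar\alpha_a]_\kappa(w^{(m)})\bigr) = [\bar\alpha_a]_\kappa\bigl(\partial_\kappa(w^{(m)})\bigr),
\]
and the general case $t^n w^{(m)}$ follows from $\Omega[[t]]$-linearity of $[\bar\alpha_a]_\kappa$ and the trivial identity $q\, t^q\partial_{t^q}(t^n) = n t^n = t\partial_t(t^n)$ which handles the extra derivation term symmetrically on both sides.

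The main obstacle is part (i): setting up the rank-two Gauss--Manin--Frobenius commutation with the correct factor of $q$ and carefully tracking it through the Leibniz-rule extension. This is classical in spirit (standard Dwork theory), but in this particular normalization it demands explicit manipulation of the $A_{a,i}$ using the definition of $\alpha_a$. Part (ii) and the final bookkeeping in the factoring of $\beta_\kappa$ are then essentially formal.
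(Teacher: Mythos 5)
Your outline takes a genuinely different route from the paper: the paper does not prove the intertwining directly, but approximates $\kappa$ $p$-adically by positive integers $k_n\to\infty$, identifies the length-$\le k_n$ part of $S(b',\ep)$ with the honest finite symmetric power $Sym^{k_n}_{L(b')}H^1_t(b',b)$, where the commutation $q\,\partial_{(\kappa;n)}\circ\beta_{(\kappa;n)}=\beta_{(\kappa;n)}\circ\partial_{(\kappa;n)}$ holds by functoriality, and then passes to the limit using $\lim_n[\bar\alpha_a]_{(\kappa;n)}=[\bar\alpha_a]_\kappa$ and $\partial_{(\kappa;n)}\to\partial_\kappa$. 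Your rank-two identity $q\,\partial^{(q)}\circ\bar\alpha_a=\bar\alpha_a\circ\partial$ (including the sample relation $A_{a,2}=t\partial_tA_{a,1}+q\pi^2t^qA_{a,3}$), your step (ii), and the final factorization bookkeeping are all fine.

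The gap is in the step where you ``apply $\partial^{(q)}$ via the Leibniz rule to the symbolic expression $\bar\alpha_a(1)^{\kappa-m}\bar\alpha_a(\pi t/x)^m$'' and assert that it collapses to the desired identity. The operator $\partial^{(q)}_\kappa$ (more precisely $\mathcal{L}_{\kappa,H^{(q)}}$) is \emph{defined} only by its values on the monomials $w^{(m)}$, extended linearly; it is not a priori a ``weight-$\kappa$ derivation'' on arbitrary products, and in particular not on a binomial series with $p$-adic exponent $\kappa-m\notin\bb Z_{\geq 0}$. The claim that applying $\mathcal{L}_{\kappa,H^{(q)}}$ termwise to the expansion $[\bar\alpha_a]_\kappa(w^{(m)})=\sum C(l,n)\,w^{(l+n)}$ agrees with the symbolic Leibniz computation on $F^{\kappa-m}G^m$ is exactly the interpolation statement at stake; it is essentially the content of the lemma, not a formal consequence of the rank-two identity. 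It is true for genuine positive-integer exponents (functoriality of $Sym^k$), and one must either transfer it to $\kappa\in\bb Z_p$ by a density/limit argument over integers $k_n\to\kappa$ (the paper's proof), or verify the power-series identity directly — for instance by observing that on $\Omega[[t,w]]$ one has $\mathcal{L}_{\kappa,H}=\kappa w+(\pi^2t-w^2)\frac{\partial}{\partial w}$ and checking the chain rule for $(1+u)^{\tau}$, $\tau\in\bb Z_p$, with attention to convergence in the $S$-spaces; your proposal does neither. A second, related omission: for $\kappa=k\in\bb Z_{\geq 0}$ the normalization $w^{(m)}=\kappa^{\u m}w^m$ forces the exceptional formulas for $\partial_k$ at lengths $m=k,k+1$ (the $\hat 0$ convention), which a naive symbolic Leibniz computation does not reproduce; yet the lemma is asserted and used for all $\kappa\in\bb Z_p$ (e.g.\ $\kappa=0$ in the $H^0$ discussion), and the paper's approximation argument is what covers these cases uniformly.
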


This lemma is a bit involved and so we delay its proof until Section \ref{SS: proof of commute of partial}. As a consequence of Lemma \ref{L: partial beta}, $\beta_\kappa$ induces maps $\bar \beta_\kappa: H^1_\kappa(S(b', \ep)) \rightarrow H^1_\kappa(S(b', \ep))$ and $\bar \beta_\kappa: H^0_\kappa(S(b', \ep)) \rightarrow H^0_\kappa(S(b', \ep))$ with the property:
\[
L(\text{Sym}^{\infty, \kappa} Kl, T) = \frac{det(1 - \bar \beta_\kappa T \mid H^1_\kappa(S(b', \ep)))}{det(1 - q \bar \beta_\kappa T \mid H^0_\kappa(S(b', \ep)))}.
\]
Using our knowledge of cohomology from the previous sections gives:

\begin{theorem}
If $\kappa \in \bb Z_p$ then $L(\text{Sym}^{\infty, \kappa} Kl, T)$ is an entire function. When $\kappa = 0$, then
\[
L(\text{Sym}^{\infty, 0} Kl, T) = \frac{det(1 - \bar \beta_0 T \mid H^1_\kappa(S(b', \ep)))}{1 - qT},
\]
which is still an entire function by Theorem \ref{T: T=1 root} below. 
\end{theorem}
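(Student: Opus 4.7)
The plan is to combine the cohomological identity displayed just above the theorem,
\[
L(\text{Sym}^{\infty,\kappa}Kl,T)=\frac{\det(1-\bar\beta_\kappa T\mid H^1_\kappa(S(b',\ep)))}{\det(1-q\bar\beta_\kappa T\mid H^0_\kappa(S(b',\ep)))},
\]
with the complete description of $H^0_\kappa$ provided by Theorem \ref{T: 0-homology}. Both factors on the right are Fredholm determinants of operators induced by the completely continuous map $\beta_\kappa$ on $S(b',\ep)$ (with $\beta_\kappa$ on $S^1$ and $q\beta_\kappa$ on $S^0$, forced by $q\partial_\kappa\beta_\kappa=\beta_\kappa\partial_\kappa$), and hence both are $p$-adic entire.

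For $\kappa\in\bb Z_p\setminus\{0\}$, Theorem \ref{T: 0-homology} yields $H^0_\kappa=0$, so the denominator degenerates to $1$ and $L(\text{Sym}^{\infty,\kappa}Kl,T)$ coincides with the entire numerator.

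For $\kappa=0$, Theorem \ref{T: 0-homology} gives $H^0_0=\Omega$, generated by the class of $\w{0}=1\in S(b',\ep)$. I would next identify $\bar\beta_0$ on this one-dimensional space by unwinding the definitions. Evaluating
\[
[\bar\alpha_a]_0(\w{0})=0^{\underline{0}}\cdot\bigl(\Upsilon_q\bar\alpha_a(1)\bigr)^{0}\cdot\bigl(\Upsilon_q\bar\alpha_a(\pi t/x)\bigr)^{0}=1,
\]
and noting $\psi_t^a(1)=1$, one obtains $\beta_0(1)=1$; hence $\bar\beta_0$ acts as the identity on $H^0_0$. Therefore $\det(1-q\bar\beta_0 T\mid H^0_0)=1-qT$, which substituted into the cohomological formula gives the displayed identity for $\kappa=0$. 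Entireness of $L$ then amounts to showing that $T=1/q$ is a zero of the entire function $\det(1-\bar\beta_0 T\mid H^1_0)$, equivalently that $q$ is an eigenvalue of $\bar\beta_0$ on $H^1_0$; this is the content of the forward-referenced Theorem \ref{T: T=1 root}.

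The main obstacle, as I see it, is not the routine bookkeeping above but the actual input provided by Theorem \ref{T: T=1 root}: one must exhibit an explicit class in $H^1_0$ that is an eigenvector of $\bar\beta_0$ with eigenvalue $q$, so that the pole of $(1-qT)^{-1}$ is cancelled. Everything else is a direct consequence of the cohomological formula, the vanishing/description of $H^0_\kappa$ from Theorem \ref{T: 0-homology}, and the evaluation of $[\bar\alpha_a]_\kappa$ and $\psi_t^a$ on the constant class at $\kappa=0$.
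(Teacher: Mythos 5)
Your treatment of $\kappa\neq 0$ and your derivation of the displayed identity at $\kappa=0$ match the paper: $H^0_\kappa=0$ for $\kappa\neq 0$ by Theorem \ref{T: 0-homology}, and at $\kappa=0$ the constant class generates $H^0_0$ with $\bar\beta_0$ acting trivially, so the denominator is $1-qT$. The gap is in the last step, the entireness at $\kappa=0$. You correctly reduce it to showing that $\det(1-\bar\beta_0 T\mid H^1_0)$ vanishes at $T=1/q$, but you then assert that this ``is the content of'' Theorem \ref{T: T=1 root}. It is not: that theorem says only that $T=1$ (not $T=1/q$) is a root of $L(\text{Sym}^{\infty,\kappa}Kl,T)$ (not of the numerator determinant), and its proof is a continuity argument from Robba's result for finite symmetric powers; it exhibits no eigenvector of $\bar\beta_0$ on $H^1_0$ with eigenvalue $q$. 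Indeed, the paper never computes $H^1_\kappa$ for $\kappa\in\bb Z_{\geq 0}$, so an argument that works directly inside $H^1_0$ is exactly what is unavailable.

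The missing bridge is the unit-root relation (\ref{E: relate unit to sym}) specialized at $\kappa=0$: since $\pi_0(\bar t)^0=1$, one has $L_\text{unit}(Kl,0,T)=(1-T)/(1-qT)$, hence $L(\text{Sym}^{\infty,0}Kl,T)=(1-T)\,L(\text{Sym}^{\infty,-2}Kl,qT)/(1-qT)$. Now $L(\text{Sym}^{\infty,-2}Kl,T)$ is already known to be entire (it falls under your $\kappa\neq 0$ case), and Theorem \ref{T: T=1 root} applied at $\kappa=-2$ gives it a root at $T=1$; after the substitution $T\mapsto qT$ this becomes a zero at $T=1/q$ which cancels the pole of $(1-qT)^{-1}$. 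Without this detour through $\kappa=-2$ (or some substitute argument producing the zero at $T=1/q$), your proof of entireness at $\kappa=0$ is incomplete.
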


\begin{proof}
If $\kappa \not= 0$, then the first statement follows immediately from Theorem \ref{T: 0-homology}. Suppose now that $\kappa = 0$. In this case, the constant 1 is a basis for $H^0_\kappa$ with trivial action of Frobenius:
\[
q \bar \beta_0(1) = q \psi_t^a \circ [\bar \alpha_a]_0(1) = q \psi_t^a ( \Upsilon_q \circ \bar \alpha_a(1) )^0 = q.
\]
This proves the first part of the theorem. To show entireness, since the unit root $L$-function with $\kappa = 0$ takes the form $L_\text{unit}(0, T) = (1-T)/(1-qT)$, we see that
\[
L(\text{Sym}^{\infty, 0} Kl, T) = (1-T) \frac{L(\text{Sym}^{\infty, -2} Kl, qT)}{1-qT}.
\]
We will see in the next proposition that $L(\text{Sym}^{\infty, -2} Kl, T)$ has a root at $T = 1$, which proves the entireness for $\kappa = 0$.
\end{proof}

\begin{theorem}\label{T: T=1 root}
For every $\kappa \in \bb Z_p$, $T=1$ is a root of $L(\text{Sym}^{\infty, \kappa} Kl, T)$.
\end{theorem}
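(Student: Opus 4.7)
My strategy is to reduce to the case $\kappa \in \bb Z_p \setminus \bb Z_{\geq 0}$ via $p$-adic continuity, and then use the cohomological description to exhibit a fixed eigenvector of Frobenius.

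\textbf{Continuity reduction.} The coefficients of $L(\text{Sym}^{\infty, \kappa} Kl, T) = \sum_m c_m(\kappa) T^m$ depend $p$-adically continuously on $\kappa \in \bb Z_p$: this is visible from the Euler product, since $\pi_0(\bar t)^\kappa$ is a continuous function of $\kappa$ for the $p$-adic $1$-unit $\pi_0(\bar t)$, and also from the Dwork trace formula (Theorem \ref{T: dwork trace}) together with the continuity of $(A_{a,1}+A_{a,3} w)^\kappa$ in $\kappa$. Since $\bb Z_p \setminus \bb Z_{\geq 0}$ is $p$-adically dense in $\bb Z_p$, it suffices to prove the theorem for these $\kappa$.

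\textbf{Cohomological step.} For $\kappa \in \bb Z_p \setminus \bb Z_{\geq 0}$, Theorem \ref{T: 0-homology} gives $H^0_\kappa = 0$, so $L(\text{Sym}^{\infty, \kappa} Kl, T) = \det(1 - \bar\beta_\kappa T \mid H^1_\kappa)$. The theorem is then equivalent to exhibiting $1$ as an eigenvalue of $\bar\beta_\kappa$ on $H^1_\kappa \cong R(b')$, and the natural candidate eigenvector is the class $[1]$. Using the identity $\binom{\kappa}{l}/\kappa^{\underline l} = 1/l!$ (valid for $\kappa \in \bb Z_p \setminus \bb Z_{\geq 0}$), one computes
\[
\beta_\kappa(1) = \psi_t^a\bigl((A_{a,1}+A_{a,3}w)^\kappa\bigr) = \sum_{l \geq 0} \psi_t^a\Bigl(\tfrac{A_{a,1}^{\kappa-l}A_{a,3}^l}{l!}\Bigr) w^{(l)},
\]
and then reduces modulo $\partial_\kappa S(b', \ep)$ via the iterative procedure used in the proof of (\ref{E: Dwork decomp}): apply Lemma \ref{L: Dwork decomp H} to kill $\L{H}$-components, then iterate the shift $\xi \equiv \eta_0 - t\zeta_0' \pmod{\partial_\kappa}$ on the residual $t\zeta_0'$-terms produced at each stage. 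The degree-zero contribution $B_0(0) = A_{a,1}(0)^\kappa = 1$ already matches, and the remaining claim reduces to a combinatorial identity matching the Pochhammer coefficients $\eta_{2m,0}, \zeta_{m,l}$ recorded in Lemma \ref{L: Dwork decomp H} against the Taylor expansions of $A_{a,1}^{\kappa-l} A_{a,3}^l$.

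\textbf{Main obstacle.} The principal technical hurdle is the closed-form verification that the iterated reduction produces exactly $1 \in R(b')$, and not some other representative sharing the correct constant term. Equivalently, one must exhibit $\zeta \in S(b', \ep)$ with $\partial_\kappa \zeta = \beta_\kappa(1) - 1$ satisfying the growth condition defining $S(b', \ep)$; the Pochhammer and binomial factors must conspire with the valuation estimates of Theorem \ref{T: Dwork rel cohom} to give sufficient $p$-adic decay. A backup route, should the direct approach resist, is to deform from $\kappa = 0$: the preceding theorem exhibits $[1] \in H^0_0$ as a $\bar\beta_0$-fixed class, and a continuity argument combined with the uniqueness of the Dwork decomposition should transport this into a $\bar\beta_\kappa$-fixed class in $H^1_\kappa$ for $\kappa$ near $0$, with the continuity reduction of the first step then handling the remainder of $\bb Z_p$.
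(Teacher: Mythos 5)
There is a genuine gap at the heart of your argument: the ``cohomological step'' is not a proof but a restatement of the theorem. Knowing $H^0_\kappa=0$ and $L(\text{Sym}^{\infty,\kappa}Kl,T)=\det(1-\bar\beta_\kappa T\mid H^1_\kappa)$, the assertion that $T=1$ is a root is exactly the assertion that $1$ is an eigenvalue of $\bar\beta_\kappa$, and you propose to verify it by showing that the class $[1]$ is fixed, i.e.\ that the reduction of $\beta_\kappa(1)=\psi_t^a\bigl((A_{a,1}+A_{a,3}w)^\kappa\bigr)$ modulo $\partial_\kappa S(b',\varepsilon)$ is exactly $1\in R(b')$. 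You never establish this; you only observe that the constant term comes out to $A_{a,1}(0)^\kappa=1$, and defer the rest to an unverified ``combinatorial identity'' between the Pochhammer coefficients of Lemma \ref{L: Dwork decomp H} and the expansions of $A_{a,1}^{\kappa-l}A_{a,3}^l$. Matching the constant term is far from sufficient: with respect to the basis $\{t^n\}$ of $R(b')$ the induced matrix of $\bar\beta_\kappa$ is not triangular (the operator $\psi_t^a$ mixes $t$-degrees, the coefficient of $t^n$ in the image drawing on coefficients of $t^{qn-j}$ of the source), so a $(0,0)$-entry equal to $1$ does not produce an eigenvalue $1$, and there is no evidence offered that the higher coefficients of the reduction of $\beta_\kappa(1)$ all vanish --- indeed the zero at $T=1$ is a global phenomenon and it is not at all clear that $[1]$ itself is the relevant eigenclass. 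The ``backup route'' (deforming a $\bar\beta_0$-fixed class in $H^0_0$ into $H^1_\kappa$ for $\kappa$ near $0$) is likewise only a heuristic: no mechanism is given for transporting a class between different cohomology degrees, nor for why fixedness would persist. So as written the proposal proves nothing beyond the (correct, and shared with the paper) continuity reduction.

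For comparison, the paper avoids cohomology entirely at this point: by Robba's Theorem B, $T=1$ is a root of the finite symmetric power $L$-function $L(Sym^{k}Kl,T)$ for every positive integer $k$, and one then chooses positive integers $k_m\to\infty$ with $k_m\to\kappa$ in $\bb Z_p$, so that $L(Sym^{k_m}Kl,T)\to L(Sym^{\infty,\kappa}Kl,T)$ and the vanishing at $T=1$ passes to the limit. In other words, the continuity argument you already invoke in your first step is the whole proof once it is applied to the finite symmetric powers with Robba's result as input; if you want to keep your cohomological route instead, you must actually prove that $1$ occurs as an eigenvalue of $\bar\beta_\kappa$ on $H^1_\kappa$ (for instance by producing an explicit eigenvector or a trace/duality argument), which is precisely the part currently missing.
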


\begin{proof}
While one may use the cohomology above to prove this, we will use a different argument. From \cite[Theorem B]{Robba-SymmetricPowersof-1986}  $T=1$ is a root of the $k$-th symmetric power $L$-function $L(Sym^k Kl, T)$ for every positive integer $k$. The result now follows by continuity: let $\{ k_m \}$ be any sequence of positive integers which tend to infinity and $k_m \rightarrow \kappa$ $p$-adically, then
\[
\lim_{m \rightarrow \infty} L(Sym^{k_m} Kl, T) = L(Sym^{\infty, \kappa} Kl, T).
\]
\end{proof}

Now that $L(\text{Sym}^{\infty, \kappa} Kl, T)$ is an entire function, we next investigate the zeros using the $q$-adic Newton polygon. We could use the above theory to give a lower bound for the Newton polygon, however, since we are using the ``first'' splitting function, the estimate is weaker than it could be. In Section \ref{S: estimates} we switch to using the infinite splitting function which will provide a stronger estimate. This switch comes at the cost of reworking much of the theory we just established because the map $\beta_\kappa$ and the spaces involved are altered.

\section{$p$-adic estimates}\label{S: estimates}

In order to obtain the best possible $p$-adic estimates, we modify the splitting function used earlier. Unfortunately, this affects nearly all the spaces and maps defined earlier as well, which means we need to redefine them accordingly, as well as reprove certain results. We do this now.\footnote{The paper could have been written using only the results from this section, however, I feel there is something to be gained from the simplicity using the first splitting function as well as potential future work.}

Let $\gamma \in \overline{\bb Q}_p$ be a root of $\sum_{i=0}^\infty \frac{t^{p^i}}{p^i}$ with $ord_p(\gamma) = \frac{1}{p-1}$, and set $\Omega_0 := \bb Q_p(\gamma)$. Let $E(t) := \exp\left( \sum_{i=0}^\infty \frac{t^{p^i}}{p^i} \right)$ be the Artin-Hasse exponential. Define $\gamma_l := \sum_{i=0}^l \frac{\gamma^{p^i}}{p^i}$ and note that $ord_p(\gamma_l) \geq \frac{p^{l+1}}{p-1}-l-1$. Dwork's infinite splitting function is defined as $\theta_\infty(t) := E(\gamma t) = \sum_{i=0}^\infty \lambda_i t^i$, and it is well-known that the coefficients satisfies $ord_p(\lambda_i) \geq \frac{i}{p-1}$ since the Artin-Hasse exponential has $p$-adic integral coefficients. It also satisfies the same properties as $\theta(z)$ defined in Section \ref{S: rel bes} in terms of being a $p$-adic analytic lift of an additive character on $\bb F_q$. Set $\hat b := p / (p-1)$. Throughout this section, let $b$ and $b'$ be real numbers satisfying:
\begin{equation}\label{E: b's - 2}
\hat b \geq b > 1/(p-1) \qquad \text{and} \qquad  b \geq b', \qquad \text{and set } \qquad \ep := b - \frac{1}{p-1}.
\end{equation}
With $q = p^a$ for $a \geq 0$, define the spaces
\begin{align*}
\c L(b'; \rho) &:= \left\{ \sum_{n=0}^\infty A(n) t^n \mid A(n) \in \Omega_0, ord_p A(n) \geq 2b' n + \rho \right\} \\
\c L(b') &:= \bigcup_{\rho \in \bb R} \c L(b; \rho) \\
\c K_q(b', b; \rho) &:= \left\{ \sum_{n \geq 0, u \in \bb Z} A(n, u) t^n \cdot t^{q m(u)} x^u \mid A(n, u) \in \Omega_0, ord_p A(n, u) \geq 2b'n + b|u| + \rho  \right\} \\
\c K_q(b',b) &:= \bigcup_{\rho \in \bb R} \c K(b',b;\rho).
\end{align*}
We will denote $\c K_1(b', b)$ by $\c K(b', b)$. Note that these spaces are precisely the same as in Section \ref{S: rel bes} but with $\pi$ replaced by $\gamma$.

\medskip\noindent{\bf Relative cohomology.} Our first step is to obtain relative cohomology, which requires a boundary map $\widehat D_t$ defined as follows. As we did before, consider the $p$-adic analogue of $\theta(1)^{Tr_{\bb F_q / \bb F_p}( x + \frac{t}{x})} = \theta(1)^{Tr(x)} \theta(1)^{Tr(t/x)}$: define
\begin{align*}
F_\infty(t,x) &:= \theta_\infty(x) \theta_\infty(\frac{t}{x}) \\
F_{\infty, a}(t,x) &:= \prod_{i=0}^{a-1} F_\infty(t^{p^i}, x^{p^i}),
\end{align*}
Next define a function $G(t,x)$ such that $F_\infty(t,x) = \frac{G(t,x)}{G(t^p, x^p)}$. Using this equation recursively, we see that $G(t,x)$ must be defined by
\[
G(t,x) := \prod_{j=0}^\infty F_\infty(t^{p^j}, x^{p^j}) \in \Omega_0[[t, x]].
\]
Define the (twisted) boundary operator $\widehat D_t$ on $\c K(b',b)$ by
\begin{align*}
\widehat D_t :&= \frac{1}{G(t,x)} \circ x \frac{\partial}{\partial x} \circ G(t,x) \\
&= x\frac{\partial}{\partial x} + W_1(t,x)
\end{align*}
where, setting $f_x := x \frac{\partial}{\partial x} f(t,x)$,
\begin{equation}\label{E: W1}
W_1(t,x) := \sum_{j=0}^\infty \gamma_j p^j f_x(t^{p^j}, x^{p^j}).
\end{equation}
As $W_1 \in \c K(\hat b, \hat b; -1)$ and acts via multiplication, $\widehat D_t$ is a well-defined endomorphism of $\c K(b', b)$. This allows us to define the relative cohomology spaces
\[
H_t^0(\c K(b', b)) := ker( \widehat D_t \mid \c K(b',b) ) \qquad \text{and} \qquad H_t^1(\c K(b', b)) := \c K(b',b) / \widehat D_t \c K(b',b).
\]
We will now identify $H_t^1(\c K(b', b))$ with a simpler space and show $H_t^0 = 0$. Define the spaces (with $\rho \in \bb R$)
\begin{align*}
\c V_q(b', b; \rho) &:= \left( \Omega_0[[t]] + \Omega_0[[t]] \frac{t^q}{x} \right) \cap \c K_q(b', b; \rho) \\
\c V_q(b', b) &:= \bigcup_{\rho \in \bb R} \c V_q(b', b; \rho).
\end{align*}

\begin{theorem}\label{T: rel cohom 2}
We have 
\[
H_{t^q}^0(\c K(b', b)) = 0 \qquad \text{and} \qquad H_{t^q}^1(\c K(b'/q, b)) \cong \c V_q(b'/q, b).
\]
Furthermore,
\[
\c K_q(b'/q, b; 0) = \c V_q(b'/q, b; 0) \oplus \widehat D_{t^q} \c K_q(b'/q, b; \ep).
\]
\end{theorem}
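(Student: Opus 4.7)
The proof parallels that of Theorem \ref{T: rel dwork decomp} (i.e., \cite[Theorem 2.1]{MR0387281}), with the classical boundary operator $D_{t^q} = x \partial/\partial x + \pi(x - t^q/x)$ replaced by $\widehat D_{t^q} = x\partial/\partial x + W_1(t^q, x)$. Both the kernel vanishing and the isomorphism $H^1_{t^q}(\c K(b'/q, b)) \cong \c V_q(b'/q, b)$ reduce to the explicit direct sum decomposition on the last displayed line, so it suffices to establish that decomposition.

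For existence, I would build the decomposition by iterative reduction. Given $\xi \in \c K_q(b'/q, b; 0)$ supported on monomials $t^n \cdot t^{qm(u)} x^u$, the goal is to find $\zeta \in \c K_q(b'/q, b; \ep)$ such that $\xi - \widehat D_{t^q}(\zeta)$ is supported only on $u \in \{0,-1\}$, i.e., lies in $\c V_q(b'/q, b; 0)$. Following the classical trick: to eliminate a term $c \cdot t^n x^U$ with $U \geq 1$, use that $\widehat D_{t^q}((c/\gamma) t^n x^{U-1})$ has as its $j=0$ principal contribution the monomial $c t^n x^U$, together with a $c(U-1)/\gamma \cdot t^n x^{U-1}$ correction and the shifts $(c \gamma_j p^j / \gamma) \cdot t^n x^{U-1 \pm p^j}$ arising from the higher-$j$ summands of $W_1$. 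A symmetric procedure eliminates $t^{qv}/x^v$ terms for $v \geq 2$. Iterating converges in $\c K_q(b'/q, b)$ because (i) the $\ep$-buffer $\ep = b - 1/(p-1) > 0$ from (\ref{E: b's - 2}) absorbs the $1/(p-1)$ loss from dividing by $\gamma$, and (ii) the valuations $ord_p(\gamma_j p^j) \geq \frac{p^{j+1}}{p-1} - 1$ strongly damp the higher-$j$ contributions.

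For the direct-sum property, I would show $\c V_q(b'/q, b; 0) \cap \widehat D_{t^q} \c K_q(b'/q, b; \ep) = 0$. Given $v = \widehat D_{t^q}(\zeta)$ with $v \in \c V_q(b'/q, b; 0)$, expand $\zeta = \sum_u \zeta_u(t) \cdot t^{qm(u)} x^u$ and compare $x^u$-coefficients on both sides of $\widehat D_{t^q}(\zeta) = v$. For $u \notin \{0, -1\}$ the corresponding coefficient of $v$ vanishes, producing an infinite recursive system in which the leading $u$-eigenvalue of $x\partial/\partial x$ (combined with the growth condition of $\c K_q(b'/q, b; \ep)$) forces each $\zeta_u$ to vanish, propagating inward until $\zeta = 0$. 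This parallels \cite[Lemma 2.5]{MR0387281}. Triviality of $H^0_{t^q}(\c K(b',b))$ is then immediate, since any $\zeta$ with $\widehat D_{t^q} \zeta = 0$ has $\widehat D_{t^q}\zeta \in \c V_q$, placing $\zeta$ in the intersection just shown to be trivial.

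The main technical difference from the classical case is that $W_1(t^q, x)$ contains infinitely many summands $\gamma_j p^j(x^{p^j} - t^{qp^j}/x^{p^j})$, rather than only the two terms $\pi x$ and $-\pi t^q/x$. Consequently, each elimination step in the iterative reduction produces infinitely many new monomials $x^{U-1 \pm p^j}$, and one must track simultaneous cascades through the valuation filtration. The rapid growth of $ord_p(\gamma_j p^j)$ in $j$ is what makes convergence work, and the hypothesis $\hat b \geq b$ in (\ref{E: b's - 2}) is precisely what absorbs the coarser loss from $W_1 \in \c K(\hat b, \hat b; -1)$.
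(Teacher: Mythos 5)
There is a genuine gap in the convergence of your elimination cascade. You treat every summand $\gamma_j p^j\bigl(x^{p^j}-t^{qp^j}x^{-p^j}\bigr)$ of $W_1(t^q,x)$ as a raw perturbation and rely on the size of $ord_p(\gamma_j p^j)\ge \frac{p^{j+1}}{p-1}-1=\hat b\,p^j-1$ to damp the new monomials. But that bound exactly matches the weight those monomials must carry: cancelling $c\,t^nx^U$ against $\widehat D_{t^q}\bigl((c/\gamma)t^nx^{U-1}\bigr)$ creates, from the $j$-th summand, a term at $x^{U-1+p^j}$ whose guaranteed gain in the filtration of $\c K_q(b'/q,b;\cdot)$ is only $\delta_j=(p^j-1)(\hat b-b)$ (the $j=0$ corrections gain $\ep$ as in the classical case). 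The range (\ref{E: b's - 2}) allows, and the paper later uses, $b=\hat b$ (the reductions behind Theorem \ref{T: Dwork rel cohom 2} take place in $\c K(\hat b,\hat b)$), and there $\delta_j=0$: each pass creates new terms of the same filtration level at strictly higher degree, so the iteration is not shown to converge in the norm of $\c K_q(b'/q,b)$; your appeal to ``rapid damping'' and to ``$\hat b\ge b$ absorbing the loss'' only breaks even at the endpoint where the statement is needed. The directness/uniqueness step has the same defect: for $u\notin\{0,-1\}$ the coefficient equations are coupled across infinitely many shifts $\pm p^j$, so the classical inward-propagation argument does not apply verbatim.

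The missing idea is to factor out the principal part before iterating. Since $f_x(t^{qp^j},x^{p^j})=f_x(t^q,x)^{p^j}+p\,h_j$ with $h_j$ $p$-integral, one has $W_1(t^q,x)=\gamma f_x(t^q,x)\,Q_1+R_1$ with $Q_1$ a $1$-unit and $Q_1,R_1\in\c K_q(\hat b,\hat b;0)$, i.e.\ with no loss of constant. Thus $\widehat D_{t^q}$ is, up to the unit $Q_1$, the classical multiplication operator $\gamma f_x$ plus perturbations ($x\frac{\partial}{\partial x}$ and $R_1$) that gain $\ep$ on $\c K_q(b'/q,b;\ep)$ for every $b\le\hat b$; one then invokes Dwork's decomposition with respect to multiplication by $\gamma f_x$ (\cite[Lemma 2.1]{MR0387281}) and runs a successive approximation whose corrections gain $\ep$ at each step, with uniqueness and $H^0_{t^q}=0$ likewise reduced to Dwork's statement. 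This is precisely the content of the paper's citation proof (\cite[Lemma 2.1]{MR0387281} together with the argument of \cite[Section 3.3]{MR2873140}), and the same factorization $W_1=\gamma f_xQ_1+R_1$ is carried out explicitly in the proof of Lemma \ref{L: technical}. As written, your outline establishes the decomposition only in the interior range $b<\hat b$, where $\delta_j>0$.
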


\begin{proof}
The result follows from \cite[Lemma 2.1]{MR0387281}, and a similar argument to \cite[Section 3.3]{MR2873140}.
\end{proof}

\medskip\noindent{\bf Relative Frobenius.} Define the relative Frobenius 
\[
\alpha_{\infty, 1}(t) := \psi_x \circ F_\infty(t, x),
\]
and, for $m \geq 1$,
\begin{align*}
\alpha_{\infty, m}(t) :&= \psi^m_x \circ F_{\infty, m}(t, x) \\
&= \alpha_{\infty, 1}(t^{p^{m-1}}) \circ \cdots \circ \alpha_{\infty, 1}(t^p) \circ \alpha_{\infty, 1}(t).
\end{align*}
It follows from $F_\infty(t, x) \in \c K(\hat b / p, \hat b / p; 0)$ that $F_\infty(t^{p^i}, x^{p^i}) \in \c K(\hat b/ p^{i+1}, \hat b / p^{i+1}; 0 )$, and thus $\alpha_{\infty, m}(t): \c K(b', b; 0) \rightarrow \c K_{p^m}(b' / p^m, b; 0)$. As
\[
p^m \widehat D_{t^{p^m}} \circ \alpha_{\infty, m} = \alpha_{\infty, m} \circ \widehat D_t,
\]
we see that $\alpha_{\infty, m}$ induces a map on relative cohomology $\bar \alpha_{\infty, m}(t): H_t^1(\c K(b', b)) \rightarrow H_{t^{p^m}}^1(\c K(b'/p^m, b))$. Again, we need to understand the power series entries of the matrix of $\bar \alpha_{\infty, m}(t)$:

\begin{theorem}\label{T: Dwork rel cohom 2}
With $\{1, \gamma t/x\}$ and $\{1, \gamma t^{p^m} / x\}$ as bases of $H_t^1(\c K(\hat b, \hat b))$ and $H_{t^{p^m}}^1(\c K(\hat b/p^m, \hat b))$, respectively, the matrix of the relative Frobenius $\bar \alpha_{\infty, m}$ satisfies
\[
\bar \alpha_{\infty, m}(1) = A_{m, 1}(t) + A_{m, 3}(t) \frac{\gamma t^{p^m}}{x} \qquad \text{and} \qquad \bar \alpha_{\infty, m}(\frac{\pi t}{x}) = A_{m, 2}(t) + A_{m, 4}(t) \frac{\gamma t^{p^m}}{x},
\]
where
\begin{equation}
\begin{split}
A_{m, 1} &\in \c L(\hat b/p^m ; 0) \\
A_{m, 2} &\in \c L(\hat b / p^m; \frac{1}{p-1} - \frac{\hat b}{p})
\end{split}
\qquad  \qquad 
\begin{split}
A_{m, 3} &\in \c L(\hat b/p^m; \hat b - \frac{1}{p-1}) \\
A_{m, 4} &\in \c L(\hat b/p^m; \hat b - \frac{\hat b}{p})
\end{split}
\end{equation}
and $A_{m, 1}(0) = 1$.
\end{theorem}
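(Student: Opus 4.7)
The plan is to adapt the proof of Theorem \ref{T: Dwork rel cohom} verbatim, with the systematic substitutions $\theta \mapsto \theta_\infty$, $\pi \mapsto \gamma$, $\tilde b \mapsto \hat b = p/(p-1)$, $L \mapsto \c L$, $K \mapsto \c K$, $V \mapsto \c V$, $D \mapsto \widehat D$, and with Theorem \ref{T: rel dwork decomp} replaced by Theorem \ref{T: rel cohom 2}. The engine driving the estimates is the improved growth $\text{ord}_p \lambda_i \geq i/(p-1)$ of the coefficients of $\theta_\infty$, which translates into $F_\infty(t, x) \in \c K(\hat b/p, \hat b/p; 0)$; note that $\hat b/p = 1/(p-1) = \text{ord}_p \gamma$, which is exactly what allows the improvement over $\tilde b$.

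First I would compute the raw image of the basis vectors under $\alpha_{\infty, m}$. By construction, $\alpha_{\infty, m}(1) = \psi_x^m(F_{\infty, m}(t, x))$ and $\alpha_{\infty, m}(\gamma t/x) = \psi_x^m(F_{\infty, m}(t, x) \cdot \gamma t/x)$. The product representation of $F_{\infty, m}$ together with $F_\infty(t^{p^i}, x^{p^i}) \in \c K(\hat b/p^{i+1}, \hat b/p^{i+1}; 0)$ yields $F_{\infty, m} \in \c K(\hat b/p^m, \hat b/p^m; 0)$; applying $\psi_x^m$, which improves the $x$-adic growth by a factor of $p^m$ while landing in the $\c K_{p^m}$ ambient space, places $\alpha_{\infty, m}(1)$ in $\c K_{p^m}(\hat b/p^m, \hat b; 0)$, with a parallel statement for the second basis vector.

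Next I would invoke the decomposition $\c K_{p^m}(\hat b/p^m, \hat b; 0) = \c V_{p^m}(\hat b/p^m, \hat b; 0) \oplus \widehat D_{t^{p^m}} \c K_{p^m}(\hat b/p^m, \hat b; \ep)$ from Theorem \ref{T: rel cohom 2}, and project the Frobenius images onto the basis $\{1, \gamma t^{p^m}/x\}$ of $\c V_{p^m}$. Writing the $\c V_{p^m}$-component as $A(t) + B(t) \gamma t^{p^m}/x$, the coefficient $A$ lies in $\c L(\hat b/p^m; 0)$ directly from the parent growth, while $B$ absorbs the intrinsic valuation shift $\hat b - 1/(p-1)$ carried by the basis vector $\gamma t^{p^m}/x$, giving $B \in \c L(\hat b/p^m; \hat b - 1/(p-1))$. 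Applying this reading-off procedure to $\alpha_{\infty, m}(1)$ and $\alpha_{\infty, m}(\gamma t/x)$ yields the four stated growth conditions on $A_{m, 1}, A_{m, 2}, A_{m, 3}, A_{m, 4}$.

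For the initial value $A_{m, 1}(0) = 1$, I would evaluate the $\c V_{p^m}$-decomposition at $t = 0$, where the $\gamma t^{p^m}/x$-component vanishes automatically, and match against the explicit form of $\alpha_{\infty, m}(1)|_{t=0}$ via the Frobenius structure on the degenerate fiber, proceeding analogously to the original calculation in \cite{MR0387281}. The main obstacle is the valuation bookkeeping in the reduction algorithm underlying the $\c V_{p^m}$-projection: one must verify that iteratively eliminating the $x^u$-monomials with $u \notin \{0, -1\}$ via $\widehat D_{t^{p^m}} = x\partial/\partial x + W_1(t^{p^m}, x)$ propagates the stated growth conditions without loss. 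This is a direct adaptation of \cite[Lemma 3.2]{MR0387281} to the infinite-splitting-function setting, where the upgrade from $\tilde b$ to $\hat b$ yields the sharper constants required downstream in the $p$-adic estimates for Theorem \ref{T: main theme}.
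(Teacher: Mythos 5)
Your proposal is essentially the paper's own proof: the paper disposes of this theorem by citing the analogous relative-Frobenius estimates of Adolphson--Sperber and \cite{MR2873140}, which is exactly the adaptation of Dwork's Theorem 2.2/Lemma 3.2 you outline (replace $\theta$ by $\theta_\infty$, $\pi$ by $\gamma$, $\tilde b$ by $\hat b$, and use Theorem \ref{T: rel cohom 2} in place of Theorem \ref{T: rel dwork decomp} to project onto $\c V_{p^m}$ and read off the growth of the matrix entries). Your valuation bookkeeping is consistent, provided you measure the factor $\gamma t/x$ at the pre-$\psi_x$ scale $\hat b/p=1/(p-1)$, which is precisely what makes the constants $\frac{1}{p-1}-\frac{\hat b}{p}$ and $\hat b-\frac{\hat b}{p}$ come out as stated.
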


\begin{proof}
This follows from an analogous argument to \cite[Section 3]{AdolpSperb-ExponentialSumsand-1989} or \cite[Section 3]{MR2873140}.
\end{proof}

We now move on to the infinite symmetric power theory.

\subsection{$Sym^{\infty, \kappa}$-cohomology (again)}\label{S: symmetric power cohom again}

Analogously to what we did in Section \ref{S: symmetric power cohom}, within $\Omega_0[[t, w]]$ and setting $\w{m} := \kap{m} w^m$, define the spaces
\begin{align*}
\c S(b', \ep; \rho) &:= \left\{ \sum_{n, m \geq 0} A(n, m) t^n  \w{m} \mid A(n, m) \in \Omega_0, ord_p \> A(n, m) \geq 2b' n + \ep m + \rho \right\} \\
\c S(b', \ep) &:= \bigcup_{\rho \in \bb R} \c S(b', b; \rho).
\end{align*}
We now use the Gauss-Manin connection to define a boundary operator for this space. As our cohomology has changed since Section \ref{S: symmetric power cohom}, so has the connection. Define
\begin{align*}
\widehat \partial :&= \frac{1}{G(t,x)} \circ t \frac{\partial}{\partial t} \circ G(t,x) \\
&= t\frac{\partial}{\partial t} + W_2(t,x)
\end{align*}
where, setting $f_t(t,x) := t \frac{\partial}{\partial t} f(t,x)$, 
\[
W_2(t,x) := \sum_{j=0}^\infty \gamma_j p^j f_t(t^{p^j}, x^{p^j}) = \sum_{j=0}^\infty \gamma_j p^j \left(\frac{t}{x}\right)^{p^j}.
\]
Note that $\widehat \partial$ is an endomorphism of $\c K(b', b)$ since $W_2(t,x) \in \c K(\hat b, \hat b; -1)$. Also, $\widehat \partial$ commutes with $\widehat D(t)$ as endomorphisms of $\c K(b',b)$, and thus it induces an operator on relative cohomology $\widehat \partial: H_t^1(\c K(b', b)) \rightarrow H_t^1(\c K(b', b))$. With respect to the basis $\{1, \frac{\gamma t}{x} \}$ on $H_t^1(\c K(b', b))$, we may write the boundary map in matrix form $\widehat \partial = t \frac{d}{dt} + \widehat H$, where $\widehat H$ is a two-by-two matrix with entries in $\c L(b')$. We define the $\kappa$-symmetric power of the boundary operator $\widehat \partial$ on $\c S(b', \ep)$ by $\widehat \partial_\kappa := t \frac{d}{dt} + \c L_{\kappa, \widehat H}$, where $\c L_{\kappa, \widehat H}$ is defined similarly to (\ref{E: Lie derivative}) but using the matrix $\widehat H$. Define the cohomology spaces
\[
H_\kappa^0(\c S(b', \ep)) := ker( \widehat \partial_\kappa \mid \c S(b', \ep)) \qquad \text{and} \qquad H_\kappa^1(\c S(b', \ep)) := \c S(b', \ep) / \widehat \partial_\kappa \c S(b', \ep).
\]

\subsection{$H^0_\kappa$ and $H_\kappa^1$ when $\kappa \in \bb Z_p \setminus \bb Z_{\geq 0}$}

Throughout this section we will assume $\kappa \in \bb Z_p \setminus \bb Z_{\geq 0}$. As we did before, we will show $H^0_\kappa = 0$ and identify $H^1_\kappa$ with a simpler space $\c R(b')$ defined as follows. The spaces $\c L(b'; \rho)$ and $\c L(b')$ sit naturally in $\c S(b', \ep)$ by sending $\xi \mapsto \xi$, the series with no $\w{m}$ terms. We will denote by $\c R(b')$ and $\c R(b'; \rho)$ the images of these spaces in $\c S(b', \ep)$. We will show that $H_{\kappa}^1(S(b', \ep))  \cong R(b')$.

Abusing notation slightly, define the matrix
\[
H := 
\left(
\begin{array}{cc}
0  & 1     \\
\gamma^2 t  &  0     
\end{array}
\right).
\]
(Note, this is the same matrix as in Section \ref{S: symmetric power cohom} with $\pi$ replaced by $\gamma$.) In the following lemma we relate $\widehat H$ with the matrix $H$. This lemma is a technical key which allows us to prove results using the simpler $H$ matrix and then lift the result to that of the more complicated $\widehat H$.

\begin{lemma}\label{L: technical}
There exists $\eta_0$ a 1-unit in $\c L(b'; 0)$ and a matrix $\widehat R$ such that
\[
\widehat H =  \eta_0 H + \widehat R,
\]
where
\[
\widehat R = 
\left(
\begin{array}{cc}
 \widehat R_{00} & \widehat R_{01}   \\
 \widehat R_{10} & \widehat R_{11} 
\end{array}
\right)
\]
satisfies
\[
\widehat R_{00} \in \c L(b'; 0) \quad \widehat R_{01} \in \c L(b'; \ep) \quad \widehat R_{10} \in \c L(b'; -\ep) \quad \widehat R_{11} \in \c L(b'; 0).
\]
\end{lemma}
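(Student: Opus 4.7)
The plan is to compute $\widehat H$ entry by entry by reducing $\widehat\partial$ applied to the basis vectors $\{1,\gamma t/x\}$ modulo $\widehat D_t\, \c K(b',b)$, and then to identify and extract the part that coincides with $\eta_0 H$, leaving the remainder $\widehat R$ with the claimed growth. Start by separating the $j=0$ contributions from the tails:
\[
W_2 = \gamma t/x + \tilde W_2, \qquad W_1 = \gamma(x - t/x) + \tilde W_1,
\]
where $\tilde W_2 := \sum_{j\geq 1}\gamma_j p^j\,(t/x)^{p^j}$ and $\tilde W_1 := \sum_{j\geq 1}\gamma_j p^j\,(x^{p^j}-t^{p^j}/x^{p^j})$. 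The bound $ord_p(\gamma_j p^j)\geq p^{j+1}/(p-1)-1$ shows that, relative to the $\c K(b',b;\cdot)$-weighting, the tails $\tilde W_1,\tilde W_2$ enjoy extra positive valuation (uniformly in $j\geq 1$) compared to the $j=0$ terms. Under this decomposition, $\widehat\partial = \partial_0 + \tilde W_2$ and $\widehat D_t = D_0 + \tilde W_1$, where $\partial_0:=t\partial/\partial t + \gamma t/x$ and $D_0:=x\partial/\partial x + \gamma(x-t/x)$ are precisely the Section \ref{S: rel bes} operators with $\pi$ replaced by $\gamma$.

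Using Theorem \ref{T: rel cohom 2}, let $T:\c K(b',b;\rho)\to\c V(b',b;\rho)$ be the canonical projection, and define $\widehat H_{ij}$ via
\[
T(\widehat\partial(1)) = \widehat H_{00} + \widehat H_{01}\cdot\gamma t/x, \qquad T(\widehat\partial(\gamma t/x)) = \widehat H_{10} + \widehat H_{11}\cdot\gamma t/x.
\]
The $j=0$ piece of the reduction reproduces exactly the Section \ref{S: rel bes} computation: $\partial_0(1)\equiv\gamma t/x$ and $\partial_0(\gamma t/x)\equiv\gamma^2 t\pmod{D_0 \c K}$ (the usual Kloosterman cancellation $\gamma^2 t^2/x^2\equiv\gamma^2 t - \gamma t/x$), which is exactly the matrix $H$ with $\pi\to\gamma$. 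The remaining contributions come from the tails $\tilde W_2$ (perturbing the derivative) and $\tilde W_1$ (perturbing the reduction), both of which carry extra positive $p$-adic valuation.

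Now set $\eta_0:=\widehat H_{01}$. Since the $\gamma t/x$-summand of $W_2$ already contributes $1$ to $\widehat H_{01}$, and every further contribution coming from reducing $\tilde W_2$ (or from the $\tilde W_1$-perturbation of the reduction operator) has strictly positive valuation, $\eta_0\in\c L(b';0)$ is a $1$-unit. Put $\widehat R:=\widehat H-\eta_0 H$. Then $\widehat R_{01}=0\in\c L(b';\ep)$ by choice, and $\widehat R_{00}=\widehat H_{00}$, $\widehat R_{11}=\widehat H_{11}$ arise purely from $\tilde W_i$-corrections and thus land in $\c L(b';0)$ by the uniform extra valuation of the tails. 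The delicate entry is
\[
\widehat R_{10} = \widehat H_{10} - \eta_0\gamma^2 t = (\widehat H_{10} - \gamma^2 t) - (\eta_0-1)\gamma^2 t,
\]
in which the main $\gamma^2 t$ of $\widehat H_{10}$ cancels with the main term of $\eta_0\gamma^2 t$. The first summand is a pure $\tilde W_i$-correction; the second pairs $\eta_0-1$ (of strictly positive valuation in $\c L(b';\cdot)$) with $\gamma^2 t$ (whose coefficient has valuation $2/(p-1)$), and one checks that the product lands in $\c L(b';-\ep)$ as claimed.

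The main obstacle will be the careful bookkeeping of the iterative reduction of the monomials $(t/x)^{p^j}$, $(t/x)^{p^j+1}$, and the cross-products $(t/x)^{p^j}\cdot\gamma t/x$ modulo $\widehat D_t$. Each reduction step drops $|u|$ by one via $x\partial/\partial x$, but simultaneously introduces new terms of both higher and lower $x$-degree through multiplication by $W_1=\gamma(x-t/x)+\tilde W_1$, producing a cascade that must be controlled. A clean approach is to treat the $D_0$-reduction of Section \ref{S: rel bes} as a zeroth-order step and to expand the $\tilde W_1$-perturbation as a geometric-series-type correction whose terms decay $p$-adically by the estimates on $\gamma_j p^j$; the sum can then be collected into the entries of $\widehat R$, and the explicit leading form of $\widehat H_{10}-\gamma^2 t$ can be read off to verify the sharp $-\ep$ growth.
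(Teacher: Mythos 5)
There is a genuine gap, and it sits exactly at the two places where you write ``by the uniform extra valuation of the tails'' and ``one checks that the product lands in $\c L(b';-\ep)$.'' First, the uniformity claim is not valid on the whole range (\ref{E: b's - 2}): the $j\geq 1$ term $\gamma_jp^j(t/x)^{p^j}$ has $ord_p(\gamma_jp^j)\geq \frac{p^{j+1}}{p-1}-1=\hat b\,p^j-1$, while membership in $\c K(b',b;\rho)$ only asks for $b\,p^j+\rho$; so the tail sits at level $(\hat b-b)p^j-1$, which at $b=\hat b$ is $-1$, i.e.\ the same level as the $j=0$ term, and after applying the projection $T$ of Theorem \ref{T: rel cohom 2} (which only preserves the level) you get contributions to $\widehat H_{00},\widehat H_{11}$ at level $(\hat b-b)p-1$, which is negative once $b>\hat b-1/p$. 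To recover the stated $\c L(b';0)$, $\c L(b';\ep)$, $\c L(b';-\ep)$ bounds one must exploit that the low-level tail monomials have large $|u|$ and gain $\ep=b-\frac1{p-1}$ for each power of $\gamma f_x$ extracted; this is precisely the ``cascade'' you defer to the last paragraph, and it is the actual content of the proof. The paper does it multiplicatively: write $W_2=\gamma f_tQ_2$ and $W_1=\gamma f_xQ_1+R_1$ with $Q_1,Q_2$ $1$-units in $\c K(\hat b,\hat b;0)$, apply Dwork's Lemma 2.1 to get $Q_2=\eta_0+\gamma f_xh$ with $\eta_0\in\c L(b';0)$ and $h\in\c K(b',b;\ep)$, and then reduce the resulting products with Theorem \ref{T: rel cohom 2}; the $\ep$-gains come out of this factorization, not out of the raw size of $\gamma_jp^j$.

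Second, and more structurally, your choice $\eta_0:=\widehat H_{01}$ forces you to prove $(\eta_0-1)\gamma^2t\in\c L(b';-\ep)$, which amounts to $\eta_0-1\in\c L(b';\,2b'-b-\frac1{p-1})$ (for $b'=b$ this is $\c L(b;\ep)$). Nothing in your argument supplies this quantitative smallness of $\eta_0-1$ --- ``$1$-unit'' only gives $\eta_0-1\in\c L(b';0)$ --- and the tail estimate above only puts the deviation in $\c L(b';\ep+(\hat b-b)p-1)$, which is weaker than needed when $b$ is close to $\hat b$. Note that your claims (i) and (ii) together would let one take $\eta_0\equiv 1$, a strictly stronger statement than the lemma, which is doubtful at the top of the $b$-range. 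The paper never needs any bound on $\eta_0-1$: because the same factor $Q_2=\eta_0+\gamma f_xh$ occurs in the computation of both $\widehat\partial(1)$ and $\widehat\partial(\frac{\gamma t}{x})$, the main term of the second row is $\eta_0\gamma^2t$ from the outset (via $\eta_0(\frac{\gamma t}{x})^2\equiv\eta_0(\gamma^2t-\frac{\gamma t}{x}+\cdots)$ modulo $\widehat D_t$), and only the genuine correction terms have to be estimated, each landing in the stated spaces because they all carry a factor $h\in\c K(b',b;\ep)$ or $Q_1-1,R_1\in\c K(\hat b,\hat b;0)$. So the skeleton of your reduction is fine, but the mechanism that produces one and the same unit $\eta_0$ in both entries, together with the $\ep$-bookkeeping, is missing rather than merely postponed.
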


\begin{proof}
Write $f_x(t^{p^j}, x^{p^j}) = f_x(t, x)^{p^j} + p h(t, x)$, where $h$ has $p$-adic integral coefficients. Using this, define $Q_1$ and $R_1$ such that
\[
W_1 = \gamma f_x Q_1 + R_1.
\]
It follows from (\ref{E: W1}) that $R_1, Q_1 \in K(\hat b, \hat b; 0)$ and $Q_1$ is a 1-unit. Similarly, define $Q_2$ (no $R_2$ is required) such that $W_2 = \gamma f_t Q_2$, and note that $Q_2 \in \c K(\hat b, \hat b; 0)$ is a 1-unit. 

Consider now $\widehat \partial(1) = W_2$. From \cite[Lemma 2.1]{MR0387281}, there exists a 1-unit $\eta_0 \in \c L(b'; 0)$ and $h \in \c K(b', b; \ep)$ such that $Q_2 = \eta_0 + \gamma f_x h$. Then
\begin{align*}
W_2 &= \gamma f_t Q_2 \\
&= \gamma f_t \eta_0 + \gamma f_x( \gamma f_t h) \\
&= \gamma f_t \eta_0 + (W_1 - R_1) Q_1^{-1}(\gamma f_t h) \\
&= \gamma f_t \eta_0 + \widehat D_t( \zeta_1) - \xi_1
\end{align*}
where $\zeta_1 \in \c K(b', b; 0)$ and $\xi_1 \in \c K(b', b; 0)$. By Theorem \ref{T: rel cohom 2}, write $\xi_1 = \xi_1^{(0)} + \xi_1^{(1)} \frac{\gamma t}{x} + \widehat D_t(\zeta_2)$ where $\xi_1^{(0)} \in \c L(b'; 0)$, $\xi_1^{(1)} \in \c L(b'; \ep)$, and $\zeta_2 \in \c K(b', b; \ep)$. Then
\[
\widehat \partial(1) = \eta_0 \frac{\gamma t}{x} + \xi_1^{(0)} + \xi_1^{(1)} \frac{\gamma t}{x} + \widehat D_t(\zeta_1 + \zeta_2).
\]
This shows, $\widehat R_{00} = \xi_1^{(0)} \in \c L(b'; 0)$ and $\widehat R_{01} = \xi_1^{(1)} \in \c L(b'; \ep)$.

We now compute $\widehat \partial(\frac{\gamma t}{x})$. Write
\begin{align*}
\widehat \partial \left(\frac{\gamma t}{x} \right) &= \frac{\gamma t}{x} + W_2 \frac{\gamma t}{x} \\
&= \frac{\gamma t}{x} + \gamma f_t Q_2 \frac{\gamma t}{x}  \\
&= \frac{\gamma t}{x}  + \gamma f_t( \eta_0 + \gamma f_x h) \frac{\gamma t}{x} \\
&= \frac{\gamma t}{x}  + \eta_0 \left( \frac{\gamma t}{x} \right)^2 + \gamma f_x( \gamma f_t h) \frac{\gamma t}{x}.
\end{align*}
Now,
\begin{align*}
\widehat D_t \left(\frac{\gamma t}{x} \right) &= -\frac{\gamma t}{x}  + W_1 \frac{\gamma t}{x}  \\
&= - \frac{\gamma t}{x}  + (\gamma f_x Q_1 + R_1) \frac{\gamma t}{x}  \\
&= - \frac{\gamma t}{x}  + \gamma^2t - \left(\frac{\gamma t}{x} \right)^2 + \gamma f_x \xi_1 + \xi_2
\end{align*}
where $\xi_1 := (Q_1 - 1) \frac{\gamma t}{x} $ and $\xi_2 := R_1 \frac{\gamma t}{x} $ are elements in $\c K(b', b; -\ep)$. By Theorem \ref{T: rel cohom 2},
\[
\gamma f_x \xi_1 + \xi_2 = \tilde \eta_0 + \tilde \eta_1 \frac{\gamma t}{x}  + \widehat D_t(\tilde \zeta)
\]
for some $\tilde \eta_0 \in \c L(b'; -\ep)$, $\tilde \eta_1 \in \c L(b'; 0)$, and $\tilde \zeta \in \c K(b', b; 0)$. Next, setting $\tilde \xi := \gamma f_t h \frac{\gamma t}{x}  \in \c K(b', b; -\ep)$, then
\[
\gamma f_x( \pi f_t h \frac{\gamma t}{x} ) = \gamma f_x \tilde \xi = \widehat D_t( \zeta_1) + \zeta_2 + \zeta_3 \frac{\gamma t}{x} 
\]
where $\zeta_1 \in \c K(b', b; -\ep)$, $\zeta_2 \in \c L(b'; -\ep)$, and $\zeta_3 \in \c L(b'; 0)$. Consequently, 
\begin{align*}
\widehat \partial (\frac{\gamma t}{x} ) &= \frac{\gamma t}{x}  + \eta_0( \frac{\gamma t}{x} )^2 + \gamma f_x( \gamma f_t h \frac{\gamma t}{x} ) \\
&= \frac{\gamma t}{x}  + \eta_0 \left( - \frac{\gamma t}{x} + \pi^2 t + \tilde \eta_0 + \tilde \eta_1 \frac{\gamma t}{x}  + \widehat D_t(\tilde \zeta) \right) + \widehat D_t(\zeta_1) + \zeta_2 + \zeta_3 (\frac{\gamma t}{x} ) \\
&= \eta_0 \gamma^2 t + \widehat R_{10} + \widehat R_{11} \frac{\gamma t}{x}  + \widehat D_t(\zeta_1 + \eta_0 \tilde \zeta),
\end{align*}
where
\begin{align*}
\widehat R_{10} &:= \eta_0 \tilde \eta_0 + \zeta_2 \in \c L(b'; -\ep) \\
\widehat R_{11} &:= 1 - \eta_0 + \eta_0 \tilde \eta_1 + \zeta_3 \in \c L(b'; 0).
\end{align*}
This finished the proof.
\end{proof}

It follows immediately from the estimates on the matrix of $\widehat R$ that:

\begin{corollary}\label{C: technical-2}
We have
\begin{enumerate}
\item $\c L_{\kappa, \widehat H} = \eta_0 \c L_{\kappa, H} + \c L_{\kappa, \widehat R}$, where $\c L_{\kappa, H}$ and $\c L_{\kappa, \widehat R}$ is defined similarly to (\ref{E: Lie derivative}).
\item $\c L_{\kappa, \widehat R} \c S(b', \ep; 0) \subset \c S(b', \ep; 0)$.
\end{enumerate}
\end{corollary}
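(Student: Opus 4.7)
The plan for part (1) is to observe that the product-rule formula in (\ref{E: Lie derivative}) extends to any $2\times 2$ matrix $M$ with entries in $\c L(b')$, producing an operator $\L{M}$ that is $\c L(b')$-linear in the matrix $M$. Substituting the decomposition $\widehat H = \eta_0 H + \widehat R$ of Lemma \ref{L: technical} then yields (1) immediately.

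For part (2), I will compute $\L{\widehat R}$ explicitly on the normalized basis $\{\w m\}$. Writing $\w m = \kap m w^m$ and collecting terms with the identities $\kap m (\kappa - m) = \kap{m+1}$ and $\kap m = (\kappa - m + 1)\,\kap{m-1}$, I expect to arrive at
\[
\L{\widehat R}(\w m) = \bigl[(\kappa - m)\widehat R_{00} + m\widehat R_{11}\bigr] \w m + \widehat R_{01}\, \w{m+1} + m(\kappa - m + 1)\,\widehat R_{10}\, \w{m-1}.
\]
Given $\xi = \sum A(n, m)\, t^n \w m \in \c S(b', \ep; 0)$, I will then read off the coefficient of $\w j$ in $\L{\widehat R}(\xi)$, which collects contributions from the $\w{j-1}$, $\w j$, and $\w{j+1}$ components of $\xi$. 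Each of these three contributions should lie in $\c L(b'; \ep j)$ by the multiplicative closure $\c L(b'; \rho_1)\cdot \c L(b'; \rho_2) \subset \c L(b'; \rho_1 + \rho_2)$, the $p$-adic integrality of the scalars $\kappa - j$, $j$, and $(j+1)(\kappa - j)$, and the growth bounds on $\widehat R_{ij}$ supplied by Lemma \ref{L: technical}.

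No genuine obstacle is anticipated: the asymmetric valuations in Lemma \ref{L: technical} were engineered precisely so that the $\ep$-gain of $\widehat R_{01}$ absorbs the threshold shift $\ep(j-1) \to \ep j$ from $\w{j-1} \mapsto \w j$, and the $-\ep$ deficit of $\widehat R_{10}$ is exactly cancelled by the drop $\ep(j+1) \to \ep j$ from $\w{j+1} \mapsto \w j$; the diagonal entries $\widehat R_{00}, \widehat R_{11} \in \c L(b'; 0)$ preserve both index and threshold. The entire verification of (2) is thus a bookkeeping exercise that closes by design of the lemma.
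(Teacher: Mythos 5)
Your proposal is correct and matches the paper's reasoning: the paper simply declares the corollary "immediate from the estimates on the matrix of $\widehat R$," and your explicit Leibniz-rule computation of $\c L_{\kappa, \widehat R}(\w{m})$ together with the valuation bookkeeping (the $\ep$-gain of $\widehat R_{01}$ and $-\ep$ of $\widehat R_{10}$ offsetting the index shifts, the diagonal entries being neutral) is exactly the verification being left to the reader, with part (1) following from linearity of $M \mapsto \c L_{\kappa, M}$ as you say.
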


\begin{lemma}\label{L: dwork decomp for hat H}
$\c S(b', \ep; 0) = \c R(b', \ep; 0) + \c L_{\kappa, \widehat{H}} \c S(b', \ep; \ep)$.
\end{lemma}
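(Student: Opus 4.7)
The plan is to reduce to the simpler matrix $H = \begin{pmatrix} 0 & 1 \\ \gamma^2 t & 0 \end{pmatrix}$ (the Section \ref{S: symmetric power cohom} matrix $H$ with $\gamma$ in place of $\pi$), for which the analogue of Lemma \ref{L: Dwork decomp H} holds verbatim, and then to lift the decomposition to $\widehat H$ via a Neumann-series iteration powered by Corollary \ref{C: technical-2} together with the fact that $\c L_{\kappa, \widehat H}$ is $\Omega_0[[t]]$-linear.

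First I would observe that the proof of Lemma \ref{L: Dwork decomp H} translates verbatim to the $\c S$-setting, and in fact gives a decomposition at any level $\rho \in \bb R$:
\[
\c S(b', \ep; \rho) = \c R(b'; \rho) \oplus \c L_{\kappa, H}\, \c S(b', \ep; \rho + \ep).
\]
The same recursion $\w{m+1} = -m(\kappa-m+1)\gamma^2 t\, \w{m-1} + \c L_{\kappa, H}(\w{m})$ and the resulting explicit formulas expressing $\w{m}$ as the sum of an element of $\c R$ and an element of the image of $\c L_{\kappa, H}$ go through because $ord_p(\gamma^2) = ord_p(\pi^2) = 2/(p-1)$, and a uniform shift $\rho$ in the valuations of the coefficients of $\xi$ simply shifts the valuations of the resulting $r$ and $\mu$ by $\rho$.

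The critical observation for the lift is that $\c L_{\kappa, \widehat H}$ is $\Omega_0[[t]]$-linear: the Leibniz derivation defining it acts only in the $w$-direction, so $\c L_{\kappa, \widehat H}(f\mu) = f\, \c L_{\kappa, \widehat H}(\mu)$ for all $f \in \Omega_0[[t]]$. Combining this with Corollary \ref{C: technical-2}(1) yields, for every $\mu \in \c S(b', \ep)$,
\[
\c L_{\kappa, H}(\mu) = \c L_{\kappa, \widehat H}(\eta_0^{-1}\mu) - \eta_0^{-1}\, \c L_{\kappa, \widehat R}(\mu),
\]
where $\eta_0^{-1}$ lives in $\c L(b'; 0)$ since $\eta_0$ is a 1-unit. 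A short valuation check (using $\widehat R_{01} \in \c L(b'; \ep)$, $\widehat R_{10} \in \c L(b'; -\ep)$, $\widehat R_{00}, \widehat R_{11} \in \c L(b'; 0)$, together with $m(\kappa-m+1) \in \bb Z_p$) extends Corollary \ref{C: technical-2}(2) to the statement that $\c L_{\kappa, \widehat R}$ preserves $\c S(b', \ep; \rho)$ for every $\rho$.

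With these tools in hand, the iteration proceeds as follows. Set $\xi_0 := \xi \in \c S(b', \ep; 0)$. Given $\xi_n \in \c S(b', \ep; n\ep)$, apply the $H$-decomposition at level $n\ep$ to write $\xi_n = r_n + \c L_{\kappa, H}(\mu_n)$ with $r_n \in \c R(b'; n\ep)$ and $\mu_n \in \c S(b', \ep; (n+1)\ep)$, and then set $\tilde \mu_n := \eta_0^{-1}\mu_n$ and $\xi_{n+1} := -\eta_0^{-1}\, \c L_{\kappa, \widehat R}(\mu_n)$, both lying in $\c S(b', \ep; (n+1)\ep)$; the displayed identity then gives $\xi_n = r_n + \c L_{\kappa, \widehat H}(\tilde\mu_n) + \xi_{n+1}$. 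Summing over $n$, one obtains $\xi = \sum_n r_n + \c L_{\kappa, \widehat H}\!\left(\sum_n \tilde\mu_n\right)$, with $\sum_n r_n$ converging in $\c R(b'; 0)$ and $\sum_n \tilde\mu_n$ in $\c S(b', \ep; \ep)$ because the levels diverge to $\infty$. The hard part is ensuring $\eta_0^{-1}$ cooperates with $\c L_{\kappa, \widehat H}$: without the $\Omega_0[[t]]$-linearity just noted, the correction produced by the $\eta_0 \ne 1$ discrepancy would not live in the image of $\c L_{\kappa, \widehat H}$ and the iteration would collapse; once this is secured, the rest is standard telescoping driven by the $\ep$-gain at each step.
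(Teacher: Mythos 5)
Your argument is essentially the paper's own proof: decompose with respect to the simpler matrix $H$ (the proof of Lemma \ref{L: Dwork decomp H} with $\gamma$ in place of $\pi$), convert $\c L_{\kappa, H}$ into $\c L_{\kappa, \widehat H}$ via Corollary \ref{C: technical-2} and the 1-unit $\eta_0$, and iterate with an $\ep$-gain in the level at each step; the paper's correction term $\c L_{\kappa, \widehat R}(\eta_0^{-1}\zeta)$ coincides with your $\eta_0^{-1}\c L_{\kappa, \widehat R}(\zeta)$ by the $\Omega_0[[t]]$-linearity you invoke. The only item to add is the easy reverse containment $\c R(b'; 0) + \c L_{\kappa, \widehat H}\c S(b', \ep; \ep) \subset \c S(b', \ep; 0)$, which the paper records in one line using $\c L_{\kappa, \widehat H} = \eta_0 \c L_{\kappa, H} + \c L_{\kappa, \widehat R}$ together with the estimates of Corollary \ref{C: technical-2}.
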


\begin{proof}
Let $\xi \in \c S(b', \ep; 0)$. We will show $\xi$ is contained in the righthand side. By the proof of Lemma \ref{L: Dwork decomp H} but with $\pi$ replaced by $\gamma$ in the matrix of $H$, there exists $\eta \in \c R(b', \ep; 0)$ and $\zeta \in \c S(b', \ep; \ep)$ such that $\xi  = \eta + \c L_{\kappa, H}(\zeta)$. By Corollary \ref{C: technical-2}, we may write $\c L_{\kappa, H} = (\c L_{\kappa, \widehat{H}} - \c L_{\kappa, \widehat R}) \eta_0^{-1}$, and so
\[
\xi = \eta + (\c L_{\kappa, \widehat{H}} - \c L_{\kappa, \widehat R})\eta_0^{-1} \zeta = \eta + \c L_{\kappa, \widehat{H}}(\eta_0^{-1} \zeta) - \c L_{\kappa, \widehat R}(\eta_0^{-1} \zeta).
\]
By Corollary \ref{C: technical-2}, since $\eta_0^{-1} \zeta \in \c S(b', \ep; \ep)$, $\c L_{\kappa, \widehat R}(\eta_0^{-1} \zeta) \in \c S(b', \ep; \ep)$. We may now repeat this procedure with $\c L_{\kappa, \widehat R}(\eta_0^{-1} \zeta)$ and so forth, thus showing $\xi \in \c R(b', \ep; 0) + \c L_{\kappa, \widehat{H}} \c S(b', \ep; \ep)$.

To prove the other direction, let $\zeta \in \c S(b', \ep; \ep)$. Again by Corollary \ref{C: technical-2}, $\c L_{\kappa, \widehat{H}}(\zeta) = \eta_0 \c L_{\kappa, H}(\zeta) + \c L_{\kappa, \widehat R}(\zeta)$. Now $\c L_{\kappa, H}(\zeta) \in \c S(b', \ep; 0)$, and Corollary \ref{C: technical-2} gives $ \c L_{\kappa, \widehat R}(\zeta) \in \c S(b', \ep; \ep)$. This proves the result.
\end{proof}

\begin{lemma}\label{L: V and L_H complement}
$\c R(b', \ep) \cap \c L_{\kappa, H} \c S(b', \ep) = \{ 0\}$. 
\end{lemma}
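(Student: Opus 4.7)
\textbf{Proof proposal for Lemma \ref{L: V and L_H complement}.}

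The plan is to mirror the proof of the analogous statement for $\partial_\kappa$ stated just before Lemma \ref{L: Dwork decomp H}, but the absence of the $t\frac{d}{dt}$ summand in $\c L_{\kappa, H}$ actually makes the argument significantly cleaner. Let me just set up the right linear system and read off the conclusion from a purely formal recursion.

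Suppose $\eta \in \c R(b', \ep) \cap \c L_{\kappa, H}\, \c S(b', \ep)$ and pick $\xi \in \c S(b', \ep)$ with $\c L_{\kappa, H}(\xi) = \eta$. Using $\{\w{m}\}_{m\geq 0}$ as a basis of $\c S(b', \ep)$ over $\c L(b')$, write $\xi = \sum_{m \geq 0} \xi_m \w{m}$ with $\xi_m \in \Omega_0[[t]]$ and $\eta = \sum_{n \geq 0} a_n t^n$. The defining formula
\[
\c L_{\kappa, H}(\w{m}) = \w{m+1} + m(\kappa - m + 1)\, \gamma^2 t \, \w{m-1}
\]
(cf.\ the displayed expression in Section \ref{S: symmetric power cohom} with $\pi$ replaced by $\gamma$) shows that, collecting coefficients of $\w{l}$, the equation $\c L_{\kappa, H}(\xi) = \eta$ becomes the system
\begin{align*}
\kappa\, \gamma^2 t\, \xi_1 &= \eta,\\
\xi_{l-1} + (l+1)(\kappa - l)\, \gamma^2 t\, \xi_{l+1} &= 0 \qquad (l \geq 1).
\end{align*}

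Because $\kappa \in \bb Z_p \setminus \bb Z_{\geq 0}$, none of the factors $\kappa$ or $\kappa - l$ vanishes, so the $l \geq 1$ relations are a genuine recursion $\xi_{l-1} = -(l+1)(\kappa - l)\, \gamma^2 t\, \xi_{l+1}$. Iterating along even indices yields, for each $k \geq 1$, an identity of the form $\xi_0 = c_k\, \gamma^{2k} t^k\, \xi_{2k}$ for some constant $c_k \in \Omega_0$; since $\xi_{2k} \in \Omega_0[[t]]$, this places $\xi_0$ in $t^k \Omega_0[[t]]$ for every $k$, hence $\xi_0 = 0$. The same iteration along odd indices gives $\xi_1 = 0$, and the first equation of the system then forces $\eta = \kappa\, \gamma^2 t\, \xi_1 = 0$.

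The only point requiring attention is the non-vanishing of $\kappa - l$ for all $l \geq 0$, which is precisely why the hypothesis $\kappa \notin \bb Z_{\geq 0}$ is imposed; beyond that, the $p$-adic growth conditions on $\xi$ are not used at all, in contrast to the earlier $\partial_\kappa$-version before Lemma \ref{L: Dwork decomp H} where the presence of $t \frac{d}{dt}$ forced a more delicate induction using the valuation bounds on $A(n,m)$.
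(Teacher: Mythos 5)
Your proof is correct and is essentially the paper's own argument: expand $\c L_{\kappa, H}\xi = \eta$ componentwise in the basis $\{\w{m}\}_{m \geq 0}$, obtaining exactly the system $\kappa \gamma^2 t\, \xi_1 = \eta$ and $\xi_{l-1} + (l+1)(\kappa-l)\gamma^2 t\, \xi_{l+1} = 0$ for $l \geq 1$, and iterate these relations to get $t^k \mid \xi_0, \xi_1$ for every $k$, hence $\xi_0 = \xi_1 = 0$ and $\eta = \kappa \gamma^2 t\, \xi_1 = 0$ (the paper phrases the iteration along even and odd indices, but it is the same formal recursion, and it likewise uses only that the $\xi_m$ lie in $\Omega_0[[t]]$, not the growth conditions). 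One small quibble with your closing remark: the iteration never divides by $\kappa - l$, so its non-vanishing is not what the argument uses; the hypothesis $\kappa \in \bb Z_p \setminus \bb Z_{\geq 0}$ enters only because for non-negative integer $\kappa$ the operator $\c L_{\kappa, H}$ is defined differently on the normalized basis (its matrix acquires a zero entry in column $k+2$), which would change the system itself.
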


\begin{proof}
Let  $\xi = \sum_{m \geq 0} \xi_m \w{m} \in \c S(b', \ep)$ and $\eta \in \c R(b', \ep)$ be such that $\c L_{\kappa, H} \xi = \eta$. This is equivalent to the system of equations
\[
\begin{cases}
\eta &= \quad \qquad \quad + \> \kappa \gamma^2 t \xi_1 \\
0 &=  \xi_0 \quad \qquad + 2 (\kappa-1)\gamma^2 t \xi_2 \\
0 &= \xi_1 \quad \qquad + 3 (\kappa-2) \gamma^2 t \xi_3 \\
0 &= \xi_2 \quad \qquad + 4 (\kappa-3) \gamma^2 t \xi_4 \\
&\vdots
\end{cases}
\]
Observe that the second equation shows $t \mid \xi_0$, and the fourth equation shows $t \mid \xi_2$, and hence $t^2 \mid \xi_0$. Repeating this shows $t^m \mid \xi_0$ for any $m \geq 1$. Hence, $\xi_0 = 0$, and thus $\xi_{2m} = 0$ for $m \geq 1$. A similar argument using the odd rows shows $\xi_{2m+1} = 0$ for $m \geq 0$. 
\end{proof}

\begin{theorem}\label{T: cohom on hat partial}
Let $\kappa \in \bb Z_p \setminus \bb Z_{\geq 0}$. Then $H^0_\kappa(\c S(b', \ep)) = 0$, and $H^1_\kappa(\c S(b', \ep)) \cong \c R(b')$. Furthermore, 
\begin{equation}\label{E: dwork decomp sym inf}
\c S(b', \ep; 0) = \c R(b', \ep; 0) \oplus \widehat \partial_\kappa \c S(b', \ep; \ep).
\end{equation}
\end{theorem}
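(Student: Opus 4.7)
The theorem mirrors the corresponding theorem of Section \ref{S: symmetric power cohom} for $\partial_\kappa$; its three conclusions will follow from establishing the direct sum decomposition (\ref{E: dwork decomp sym inf}), from which $H^1_\kappa \cong \c R(b')$ is immediate, together with a separate kernel argument for $H^0_\kappa = 0$.

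First I would upgrade Lemma \ref{L: dwork decomp for hat H} (a plain sum decomposition for $\c L_{\kappa, \widehat H}$) to a sum decomposition for $\widehat \partial_\kappa = t\tfrac{d}{dt} + \c L_{\kappa, \widehat H}$ by the same bootstrap that proved the analogous theorem in Section \ref{S: symmetric power cohom}. Given $\xi \in \c S(b', \ep; 0)$, the lemma supplies $\eta_0 \in \c R(b', \ep; 0)$ and $\zeta_0 \in \c S(b', \ep; \ep)$ with
\[
\xi \;=\; \eta_0 + \c L_{\kappa, \widehat H}\zeta_0 \;=\; \eta_0 + \widehat \partial_\kappa \zeta_0 \;-\; t\tfrac{d}{dt}\zeta_0 .
\]
Since $-t\tfrac{d}{dt}\zeta_0$ lies again in $\c S(b', \ep; \ep)$ (an increase of $\ep$ in the filtration), the procedure may be repeated, producing sequences $\eta_m \in \c R(b', \ep; m\ep)$ and $\zeta_m \in \c S(b', \ep; (m+1)\ep)$. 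The telescoping series $\sum \eta_m$ and $\sum \zeta_m$ converge and yield $\c S(b', \ep; 0) = \c R(b', \ep; 0) + \widehat \partial_\kappa \c S(b', \ep; \ep)$.

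Next I would establish directness, and simultaneously $H^0_\kappa = 0$, by analyzing the system obtained from $\widehat \partial_\kappa \xi = \eta$ with $\eta \in \c R(b', \ep)$; taking $\eta = 0$ yields the kernel statement. Writing $\xi = \sum_m \xi_m \w{m}$ and computing the matrix of $\c L_{\kappa, \widehat H}$ in the $\w{m}$-basis, the equation becomes an infinite tridiagonal system analogous to (\ref{E: kernel of partial}), but with $1$ replaced by $\widehat H_{01}$ on the superdiagonal, $\gamma^2 t$ replaced by $\widehat H_{10}$ on the subdiagonal, and extra diagonal terms $(\kappa-m)\widehat H_{00}+m\widehat H_{11}$. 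The strategy is to induct on $t$-divisibility exactly as in Theorem \ref{T: 0-homology} and Lemma \ref{L: V and L_H complement}: one shows $t^N \mid \xi_m$ for all $m$ and all $N$, forcing $\xi = 0$ and $\eta = 0$. The bridge from the clean $H$-matrix calculation to the $\widehat H$-matrix calculation is provided by Lemma \ref{L: technical} and Corollary \ref{C: technical-2}: the decomposition $\c L_{\kappa, \widehat H} = \eta_0 \c L_{\kappa, H} + \c L_{\kappa, \widehat R}$, with $\eta_0$ a $1$-unit and $\c L_{\kappa, \widehat R}$ growth-preserving, lets us import the clean $H$-induction and correct for the perturbation $\widehat R$ iteratively.

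The main obstacle I expect is the directness step. Unlike in Section \ref{S: symmetric power cohom}, where $H_{00} = H_{11} = 0$ and $H_{10} = \gamma^2 t$ was divisible by $t$, here the subdiagonal entry $\widehat H_{10}$ has a (potentially nonzero) constant term of $p$-adic valuation only $\geq -\ep$, so the bare recursion for the constant terms $\xi_m^{(0)}$ is nondegenerate and does not immediately force vanishing. The key will be to combine the precise valuations from Theorem \ref{T: Dwork rel cohom 2} and Lemma \ref{L: technical} with the growth condition $\text{ord}_p\, \xi_m^{(n)} \geq 2b' n + \ep m + \rho$, so that any nonzero solution of the recursion violates the required $\ep m$-growth. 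Once directness is established, $H^1_\kappa(\c S(b', \ep)) \cong \c R(b')$ is a formal consequence of the decomposition, and the $\eta = 0$ instance of the same argument yields $H^0_\kappa(\c S(b', \ep)) = 0$.
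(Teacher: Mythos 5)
Your sum decomposition $\c S(b', \ep; 0) = \c R(b', \ep; 0) + \widehat \partial_\kappa \c S(b', \ep; \ep)$ is exactly the paper's first step, and deducing $H^1_\kappa(\c S(b',\ep)) \cong \c R(b')$ from (\ref{E: dwork decomp sym inf}) and $H^0_\kappa = 0$ from the $\eta = 0$ case of directness is also how the paper concludes. The gap is in the directness step itself, which is the crux of the theorem. You correctly identify the obstacle: in the $\w{m}$-basis the system for $\c L_{\kappa, \widehat H}$ has diagonal terms and a subdiagonal entry built from $\widehat H_{10}$, whose constant term need not vanish (valuation only $\geq -\ep$), so the $t$-divisibility induction of Theorem \ref{T: 0-homology} and Lemma \ref{L: V and L_H complement} does not apply. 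But your proposed repair --- that the growth condition $ord_p\, A(n,m) \geq 2b'n + \ep m + \rho$ is violated by any nonzero solution of the perturbed recursion --- is never carried out, and it is not clear it can be: in the clean case the coefficient of $t^n$ in $\xi_m$ was pinned down explicitly (e.g.\ $(-1)^m n^{-(m+1)} a_n$) and visibly contradicted the required $\ep m$-growth, whereas here the recursion mixes the diagonal, superdiagonal, and perturbed subdiagonal entries of $\widehat H$ and yields no explicit formula or usable valuation estimate for $\xi_m$. Saying that Lemma \ref{L: technical} and Corollary \ref{C: technical-2} ``let us import the clean $H$-induction and correct for $\widehat R$ iteratively'' names the right tools but supplies no mechanism.

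The paper's actual mechanism is different and sidesteps any direct analysis of the $\widehat H$-system. Suppose $\widehat\partial_\kappa \zeta = \eta \in \c R(b', \ep)$ with $\zeta \neq 0$, and choose $c$ with $\zeta \in \c S(b', \ep; c)$ but $\zeta \notin \c S(b', \ep; c+\ep)$. Using $\c L_{\kappa, \widehat H} = \eta_0 \c L_{\kappa, H} + \c L_{\kappa, \widehat R}$, write $\eta = \eta_0 \c L_{\kappa, H}\zeta + \xi_1$ with $\xi_1 := t\frac{d}{dt}\zeta + \c L_{\kappa, \widehat R}\zeta \in \c S(b', \ep; c)$; apply the already-proved sum decomposition to $\xi_1$, producing $\eta_1 \in \c R(b',\ep;c)$ and $\zeta_1 \in \c S(b',\ep;c+\ep)$, and iterate. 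Summing the resulting series gives $\c L_{\kappa, H}\bigl(\zeta + \sum_{i\geq 1}\zeta_i\bigr) = \bigl(\eta - \sum_{i \geq 1}\eta_i\bigr)\eta_0^{-1} \in \c R(b', \ep)$, and Lemma \ref{L: V and L_H complement} --- which concerns the clean matrix $H$, where the $t$-divisibility induction does work --- forces $\zeta = -\sum_{i\geq 1}\zeta_i \in \c S(b',\ep; c+\ep)$, contradicting the choice of $c$. This iteration reducing to the $H$-lemma, combined with the maximal-filtration-level contradiction, is the ingredient missing from your proposal; without it (or a genuinely worked-out valuation argument for the $\widehat H$-recursion), the directness of (\ref{E: dwork decomp sym inf}) and hence $H^0_\kappa = 0$ remain unproved.
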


\begin{proof}
First note that the righthand side of (\ref{E: dwork decomp sym inf}) is contained in the left. Next, let $\xi \in \c S(b', \ep; 0)$ and set $E := t \frac{d}{dt}$.  By Lemma \ref{L: dwork decomp for hat H}, there exists $\eta \in \c R(b', \ep; 0)$ and $\zeta \in \c S(b', \ep; \ep)$ such that
\begin{align*}
\xi &= \eta + \c L_{\kappa, \widehat{H}}(\zeta) \\
&= \eta + \widehat \partial_\kappa(\zeta) - E(\zeta).
\end{align*}
As $E(\zeta) \in \c S(b', \ep; \ep)$, we may repeat this procedure to obtain $\c S(b', \ep; 0) = \c R(b', \ep; 0) + \widehat \partial_\kappa \c S(b', \ep; \ep)$.

We now show directness. Let $\eta \in \c R(b', \ep) \cap \widehat \partial_\kappa \c S(b', \ep)$. Let $\zeta \in \c S(b', \ep)$ be such that $\widehat \partial_\kappa \zeta = \eta$. If $\zeta \not= 0$, then there exists $c \in \bb R$ such that $\zeta \in \c S(b', \ep; c)$ but $\zeta \not\in \c S(b', \ep; c + \ep)$. Now, by Corollary \ref{C: technical-2},
\[
\eta = \widehat \partial_\kappa \zeta = E \zeta + \eta_0 \c L_{\kappa, H} \zeta + \c L_{\kappa, \widehat R} \zeta.
\]
Set $\xi_1 := E \zeta + \c L_{\kappa, \widehat R} \zeta \in \c S(b', \ep; c)$, By the first part of this proof, there exists $\eta_1 \in \c R(b', \ep; c)$ and $\zeta_1 \in \c S(b', \ep; c + \ep)$ such that
\[
\xi_1 = \eta_1 + \widehat \partial_\kappa \zeta_1 = \eta_1 + E_1 \zeta_1 + \eta_0 \c L_{\kappa, H}(\zeta_1) + \c L_{\kappa, \widehat R}(\zeta_1).
\]
Set $\xi_2 := E(\zeta_1) + \c L_{\kappa, \widehat R} \zeta_1 \in \c S(b', \ep; c + \ep)$. Iterating this procedure, we obtain
\[
\eta = \sum_{i=1}^\infty \eta_i + \eta_0 \c L_{\kappa, H} (\zeta + \sum_{i=1}^\infty \zeta_i),
\]
which we rewrite as
\[
\c L_{\kappa, H}(\zeta + \sum_{i \geq 1} \zeta_i) = (\eta - \sum_{i \geq 1} \eta_i) \eta_0^{-1}.
\]
By Lemma \ref{L: V and L_H complement}, we must have $\zeta = - \sum_{i \geq 1} \zeta_i \in \c S(b', \ep; c + \ep)$, which contradicts our choice of $c$. 

Lastly, observe that setting $\eta = 0$ shows $\text{ker } \widehat \partial_\kappa = 0$.
\end{proof}

\subsection{Frobenius and estimates}\label{SS: Frob-2}

Now that we have finished the study of cohomology, we move on to the Frobenius. Most of the arguments are the same as in Section \ref{SS: Frob}. Define the $\kappa$-symmetric power of $\bar \alpha_{\infty, a}$ as follows: define $[\bar \alpha_{\infty, a}]_\kappa: \c S(b', \ep) \rightarrow \Omega_0[[t, w]]$ by linearly extending over $\c L(b')$ the action
\[
[\bar \alpha_{\infty, a}]_\kappa( \w{m} ) := \kappa^{\u{m}} \cdot ( \Upsilon_q \circ \bar \alpha_{\infty, a}(1) )^{\kappa - m} (\Upsilon_q \circ \bar \alpha_{\infty, a} \frac{\gamma t}{x} )^m,
\]
where $\Upsilon_q: H_{t^q}^1(\c K_q(b'/q, b)) \rightarrow \Omega_0[[t, w]]$  by sending $\zeta + \xi \frac{\gamma t^q}{x} \longmapsto \zeta + \xi w$. Just as in Lemma \ref{L: well-defined}, $[\bar \alpha_{\infty, a}]_\kappa$ is an endomorphism of $\c S(b', \ep)$ when $\hat b - \frac{1}{p-1} \geq b > \frac{1}{p-1}$ and $b \geq b'$. (Note that $p \geq 5$ ensures that $\hat b > 2/(p-1)$ so that such a $b$ exists.) Define the Frobenius map
\[
\beta_{\infty, \kappa} := \psi_t^a \circ [\alpha_{\infty, a}]_\kappa: \c S(b', \ep) \rightarrow \c S(b', \ep).
\]
An analogous result to Lemma \ref{L: partial beta} shows $q \widehat \partial_\kappa \circ \beta_{\infty, \kappa} = \beta_{\infty, \kappa} \circ \widehat \partial_\kappa$, and so $\beta_{\infty, \kappa}$ induces maps on cohomology $\bar \beta_{\infty, \kappa}: H^0_\kappa(\c S(b', \ep)) \rightarrow H^0_\kappa(\c S(b', \ep))$ and $\bar \beta_{\infty, \kappa}: H^1_\kappa(\c S(b', \ep)) \rightarrow H^1_\kappa(\c S(b', \ep))$. Combining this with the Dwork trace formula (analogous to Theorem \ref{T: dwork trace}), and Theorem \ref{T: cohom on hat partial} gives:

\begin{theorem}
Set $\hat b - \frac{1}{p-1} \geq b > \frac{1}{p-1}$ and $b \geq b'$. For $\kappa \in \bb Z_p \setminus \bb Z_{\geq 0}$,
\[
L(Sym^{\infty, \kappa} Kl, T) = det(1 - \bar \beta_{\infty, \kappa} T \mid H^1_\kappa(\c S(b', \ep))).
\]
\end{theorem}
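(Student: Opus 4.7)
The plan is to combine a Dwork trace formula on $\c S(b', \ep)$ with the cohomological decomposition from Theorem \ref{T: cohom on hat partial}, exactly paralleling how Theorem \ref{T: dwork trace} together with Lemma \ref{L: partial beta} yielded the cohomological description in Section \ref{SS: Frob}.

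First I would establish the analog of Theorem \ref{T: dwork trace} for the infinite splitting function, namely
\[
L(Sym^{\infty, \kappa} Kl, T) = \det(1 - \beta_{\infty, \kappa} T \mid \c S(b', \ep))^{\delta_q},
\]
where $\delta_q g(T) := g(T)/g(qT)$. The argument mirrors the proof referenced in Section \ref{SS: dwork trace proof}: a Fredholm-trace computation for $\beta_{\infty, \kappa}^m$ reduces the trace to a sum over Teichmüller lifts $\hat t$ of closed points $\bar t \in |\bb G_m / \bb F_q|$, and at each such point one recovers, after the normalization $\w{m} = \kap{m} w^m$, the local factor $\prod_{m \geq 0} (1 - \pi_0(\bar t)^{\kappa-m}\pi_1(\bar t)^m T^{\deg(\bar t)})^{-1}$ up to the $\delta_q$-twist. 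The switch from $\pi$ to $\gamma$ and the use of $\theta_\infty$ only changes constants and growth rates, not the combinatorial shape of the trace identity.

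Next I would exploit the commutation $q \widehat \partial_\kappa \circ \beta_{\infty, \kappa} = \beta_{\infty, \kappa} \circ \widehat \partial_\kappa$ (the analog of Lemma \ref{L: partial beta}) together with $H^0_\kappa(\c S(b', \ep)) = 0$ from Theorem \ref{T: cohom on hat partial}. Since $\kappa \in \bb Z_p \setminus \bb Z_{\geq 0}$ implies $\kappa \ne 0$, the kernel of $\widehat \partial_\kappa$ vanishes, so $\widehat \partial_\kappa$ is a bijection $\c S(b', \ep) \to \widehat \partial_\kappa \c S(b', \ep)$ that intertwines $q \beta_{\infty, \kappa}$ with $\beta_{\infty, \kappa}$. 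Hence
\[
\det(1 - \beta_{\infty, \kappa} T \mid \widehat \partial_\kappa \c S(b', \ep)) = \det(1 - q \beta_{\infty, \kappa} T \mid \c S(b', \ep)).
\]
Then, using the short exact sequence of $\beta_{\infty, \kappa}$-modules
\[
0 \to \widehat \partial_\kappa \c S(b', \ep) \to \c S(b', \ep) \to H^1_\kappa(\c S(b', \ep)) \to 0
\]
and the multiplicativity of Fredholm determinants, I conclude
\[
\det(1 - \beta_{\infty, \kappa} T \mid \c S)^{\delta_q} = \det(1 - \bar \beta_{\infty, \kappa} T \mid H^1_\kappa(\c S(b', \ep))),
\]
which combined with Step 1 gives the theorem.

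The main obstacle I anticipate is justifying the multiplicativity of the Fredholm determinant in the short exact sequence rigorously, since $\c S(b', \ep)$ is infinite dimensional and the subspace $\widehat \partial_\kappa \c S(b', \ep)$ is not obviously a closed direct summand in the naive topology. However, the graded decomposition $\c S(b', \ep; 0) = \c R(b', \ep; 0) \oplus \widehat \partial_\kappa \c S(b', \ep; \ep)$ supplied by Theorem \ref{T: cohom on hat partial} gives a topological complement with an $\ep$-gain on the $\widehat \partial_\kappa$-side, which should allow the standard Serre-type argument for completely continuous operators to apply block-triangularly. A secondary technical point is the verification in Step 1 that the Teichmüller-trace identity still produces exactly the binomial sum $\sum_m \kap{m}(\cdots)$ that matches the normalization $\w{m}$; this is where the choice of basis $\{ \w{m}\}$ (rather than $\{\binom{\kappa}{m} w^m\}$) plays a role and has to be handled consistently.
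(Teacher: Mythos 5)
Your proposal is correct and follows essentially the same route as the paper: the paper likewise deduces the theorem by combining the Dwork trace formula (the analogue of Theorem \ref{T: dwork trace} for $\beta_{\infty,\kappa}$), the commutation $q\,\widehat\partial_\kappa \circ \beta_{\infty,\kappa} = \beta_{\infty,\kappa}\circ\widehat\partial_\kappa$ (the analogue of Lemma \ref{L: partial beta}), and the vanishing $H^0_\kappa(\c S(b',\ep)) = 0$ together with the decomposition of Theorem \ref{T: cohom on hat partial}. The Fredholm-determinant bookkeeping over the two-term complex that you spell out is exactly what the paper leaves implicit.
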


We are now able to finish this section by providing an estimate for the $q$-adic Newton polygon of $L(Sym^{\infty, \kappa} Kl, T)$ for every $\kappa$. While we have concentrated in this section on the case when $\kappa$ is not a positive integer, the function $L(Sym^{\infty, \kappa} Kl, T)$ is continuous in the variable $\kappa$. Thus, we need only prove the result for $\kappa \in \bb Z_p \setminus \bb Z_{\geq 0}$ for the estimate to hold.

\begin{theorem}\label{T: NP estimate}
Let $\kappa \in \bb Z_p$. Writing $L(Sym^{\infty, \kappa} Kl, T) = \sum_{m=0}^\infty c_m T^m$, then for every $m \geq 0$, 
\[
ord_q c_m \geq \left(1 - \frac{1}{p-1}\right) m(m-1).
\]
\end{theorem}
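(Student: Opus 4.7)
The proof proceeds in three stages: (i) a continuity argument in $\kappa$, (ii) a cohomological rewriting using the results of Section \ref{S: estimates}, and (iii) a Hadamard-type estimate for the coefficients of a Fredholm determinant.

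First, I would note that the coefficients $c_m(\kappa)$ are continuous functions of $\kappa \in \bb Z_p$, because in the Euler product defining $L(Sym^{\infty, \kappa} Kl, T)$ each occurrence of $\pi_0(\bar t)^{\kappa - m}$ is continuous in $\kappa$ (since $\pi_0(\bar t)$ is a $p$-adic 1-unit). As $\bb Z_p \setminus \bb Z_{\geq 0}$ is dense in $\bb Z_p$, it suffices to establish the estimate for such $\kappa$ and then pass to a $p$-adic limit coefficient by coefficient.

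Next, for such a $\kappa$, I would choose $(b, b')$ at the extreme of (\ref{E: b's - 2}): set $b = \hat b - \frac{1}{p-1} = 1$ and $b' = 1 - \frac{1}{p-1}$, which is admissible precisely when $p \geq 5$ (as only then $b' > \frac{1}{p-1}$). The theorem immediately preceding this one, combined with Theorem \ref{T: cohom on hat partial}, then gives
\[
L(Sym^{\infty, \kappa} Kl, T) = \det\bigl(1 - \bar\beta_{\infty,\kappa} T \,\big|\, \c R(b')\bigr),
\]
with $\c R(b') \cong \c L(b')$ admitting $\{t^j\}_{j\geq 0}$ as a topological basis.

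With $M = (M_{ij})$ denoting the matrix of $\bar\beta_{\infty,\kappa}$ in this basis, the main technical task is to establish a column-wise bound of the form
\[
ord_p M_{ij} \;\geq\; 2ab' \, j \qquad (i, j \geq 0).
\]
This is where I expect the main work to lie. The ingredients are: the entry bounds for the relative Frobenius in Theorem \ref{T: Dwork rel cohom 2}; the infinite-splitting-function analogue of Lemma \ref{L: well-defined} describing $[\bar\alpha_{\infty, a}]_\kappa(\w{m})$; the geometric Frobenius $\psi_t^a$, which converts the coefficient of $t^{qn}$ into the coefficient of $t^n$ and is responsible for the factor $q = p^a$ amplifying $ord_p$ valuations; and the Dwork-type projection of Theorem \ref{T: cohom on hat partial} onto $\c R(b')$, which I would verify preserves valuations. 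The optimal constant $b' = 1 - \frac{1}{p-1}$ emerges from the cancellation $\frac{1}{p-1} - \frac{\hat b}{p} = 0$ peculiar to the infinite splitting function (this cancellation is exactly what fails for the first splitting function of Section \ref{S: rel bes}).

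Finally, $c_m = (-1)^m \operatorname{tr}(\wedge^m \bar\beta_{\infty, \kappa}) = (-1)^m \sum_{|S|=m} \det M_S$, summed over $m$-element subsets $S \subset \bb Z_{\geq 0}$. Expanding each $\det M_S$ as a signed sum over permutations of $S$ and applying the column bound yields
\[
ord_p \det M_S \;\geq\; 2ab' \sum_{j \in S} j \;\geq\; 2ab' \cdot \frac{m(m-1)}{2} \;=\; ab'\, m(m-1),
\]
the minimum being attained at $S = \{0,1,\ldots,m-1\}$. Dividing by $a$ to convert $ord_p$ to $ord_q$ then gives
\[
ord_q c_m \;\geq\; b'\, m(m-1) \;=\; \left(1 - \tfrac{1}{p-1}\right) m(m-1),
\]
as claimed.

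The hard part is unquestionably the column bound in the third paragraph. Obtaining the tight constant $2ab'j$ requires simultaneous control of the $\kappa$-symmetric Frobenius, the $a$-fold geometric shift, and the Dwork projection; in particular one must verify that the projection via the splitting $\c S(b',\ep;0) = \c R(b',\ep;0) \oplus \widehat\partial_\kappa \c S(b',\ep;\ep)$ does not introduce spurious denominators that would eat into the $b'j$ growth in each column.
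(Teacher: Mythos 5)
Your stages (i) and (iii) — continuity in $\kappa$ from the dense set $\bb Z_p \setminus \bb Z_{\geq 0}$, the extreme choice $b = \hat b - \frac{1}{p-1}$, and Dwork's minor/Fredholm argument turning a linear column bound into a quadratic bound on $ord\, c_m$ — are exactly the paper's. The genuine gap is your stage (ii) bound $ord_p M_{ij} \geq 2ab'j$ for the matrix of the full $q$-power Frobenius $\bar\beta_{\infty,\kappa} = \psi_t^a \circ [\bar\alpha_{\infty,a}]_\kappa$. The factor $a$ there cannot be produced by the space estimates you cite: by Theorem \ref{T: Dwork rel cohom 2} the entries of $\bar\alpha_{\infty,a}$ lie only in $\c L(\hat b/p^a;\,\cdot\,)$ (the $a$-fold product $F_{\infty,a}(t,x)=\prod F_\infty(t^{p^i},x^{p^i})$ is much less overconvergent than $F_\infty$), so the amplification by $p^a$ coming from $\psi_t^a$ exactly cancels this loss, and the best bound obtainable this way is independent of $a$: after reduction to $\c R(\hat b - \frac{1}{p-1})$ and rescaling by $\gamma^{2m}$ one gets $ord_p \geq 2\bigl(1-\frac{1}{p-1}\bigr)m$ linear in the \emph{output} index $m$, uniform in the input index, with no factor $a$. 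Feeding that into the minor expansion yields only $ord_p c_m \geq \bigl(1-\frac{1}{p-1}\bigr)m(m-1)$, i.e.\ $ord_q c_m \geq \frac{1}{a}\bigl(1-\frac{1}{p-1}\bigr)m(m-1)$, which proves the theorem only when $q=p$. (Note also that powering does not help directly: a matrix with column bound $2b'j$ has $a$-th power with column bound still only $2b'j$, the minimum over intermediate indices being attained at $0$.)

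The missing idea is the reduction to the $p$-power Frobenius, which is the first step of the paper's proof: one defines $\beta_{\infty,\kappa,1} := \psi_t \circ [\bar\alpha_{\infty,1}]_\kappa$, checks $\beta_{\infty,\kappa,1}^a = \beta_{\infty,\kappa,a}$, and uses
\[
det\bigl(1 - \bar\beta_{\infty,\kappa,a} T^a \mid H^1_\kappa\bigr) = \prod_{\zeta^a = 1} det\bigl(1 - \zeta\, \bar\beta_{\infty,\kappa,1} T \mid H^1_\kappa\bigr),
\]
so that a reciprocal root of $det(1-\bar\beta_{\infty,\kappa,a}T)$ of $q$-adic slope $s$ corresponds to reciprocal roots of $det(1-\bar\beta_{\infty,\kappa,1}T)$ of $p$-adic slope $s$. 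One then proves the entry estimate only for $\bar\beta_{\infty,\kappa,1}$ (in the basis $\{\gamma^{2n}t^n\}$, with the bookkeeping $\beta_{\infty,\kappa,1}(\gamma^{2n}t^n) \in \c S(\hat b, \ep; 0) \subset \c S(\hat b - \frac{1}{p-1}, \ep; 0)$ needed to apply Theorem \ref{T: cohom on hat partial}), obtaining $ord_p B(m,n)\gamma^{-2m} \geq 2\bigl(1-\frac{1}{p-1}\bigr)m$, and reads off the stated $ord_q$ bound via the displayed identity. Without this transfer step your argument, as written, is off by the factor $a$ whenever $q \neq p$; your parenthetical claim that $\psi_t^a$ "amplifies" valuations by $q$ is precisely where the reasoning breaks down.
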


\begin{proof}
We will prove this assuming $\kappa \in \bb Z_p \setminus \bb Z_{\geq 0}$. As this set is dense in $\bb Z_p$, the result will follow by continuity in $\kappa$ of $L(Sym^{\infty, \kappa} Kl, T)$.

Defining the operator $\beta_{\infty, \kappa, 1} := \psi_t \circ [\bar \alpha_{\infty, 1}]_\kappa: \c S(b', \ep) \rightarrow \c S(b', \ep)$, then we see that
\begin{align*}
\beta_{\infty, \kappa, 1}^a &= \psi_t \circ [\bar \alpha_{\infty, 1}(t)]_\kappa \circ \cdots \circ \psi_t \circ [\bar \alpha_{\infty, 1}(t)]_\kappa \\
&= \psi_t^a \circ [ \bar \alpha_{\infty, 1}(t^{p^{a-1}})]_\kappa  \circ \cdots \circ [\bar \alpha_{\infty, 1}(t^p)]_\kappa \circ [\bar \alpha_{\infty, 1}(t) ]_\kappa \\
&= \psi_t^a \circ [ \bar \alpha_{\infty, 1}(t^{p^{a-1}}) \circ \cdots \circ \bar \alpha_{\infty, 1}(t^p) \circ \bar \alpha_{\infty, 1}(t) ]_\kappa \\
&= \psi_t^a \circ [ \bar \alpha_{\infty, a}(t) ]_\kappa \\
&= \beta_{\infty, \kappa, a},
\end{align*}
where we have used an argument similar to \cite[Corollary 2.4]{MR3249829} for the third equality. Hence, on cohomology, $\bar \beta_{\infty, \kappa, 1}^a = \bar \beta_{\infty, \kappa, a}$. 

Now,
\begin{align}\label{E: RelFrobRelation}
det(1 - \bar \beta_{\infty, \kappa, a} T^a \mid H^1_\kappa(\c S(b', \ep))) &= det(1 - \bar \beta_{\infty, \kappa, 1}^a T^a \mid H^1_\kappa(\c S(b', \ep))) \notag \\
&= \prod_{\zeta^a = 1} det(1 - \zeta \bar \beta_{\infty, \kappa, 1} T \mid H^1_\kappa(\c S(b', \ep))).
\end{align}
Counting multiplicities, let $m_i$ denote the number of reciprocal roots of $det(1 - \bar \beta_{\infty, \kappa, 1} T \mid H^1_\kappa(\c S(b', \ep)))$ which have slope $s_i$; note, we say $\lambda \in \bb C_p$ has slope $s_i$ if $ord_p(\lambda) = s_i$. Then, from (\ref{E: RelFrobRelation}),  $det(1 - \bar \beta_{\infty, \kappa, a} T \mid H^1_\kappa(\c S(b', \ep)))$ has $m_i$ reciprocal roots of slope $s_i / a$, or alternatively, it has $m_i$ reciprocal roots of $q$-adic slope $s_i$.

In order to have the map $\beta_{\infty, \kappa, 1}$ well-defined, we require $\hat b - \frac{1}{p-1} \geq b > \frac{1}{p-1}$. Thus, set $b$ to be the maximum value  $b = \hat b - \frac{1}{p-1}$. By definition, $\ep := b - \frac{1}{p-1} = \hat b - \frac{2}{p-1}$. Note that $\gamma^{2n} t^n \in \c R(\hat b/p; 0) \subset \c S(\hat b/p, \hat b - \frac{2}{p-1}; 0)$. As $[\bar \alpha_{\infty, 1}]_\kappa$ is well-defined on this space, we have $\beta_{\infty, \kappa, 1}(\gamma^{2n} t^n) \in \c S(\hat b, \ep; 0)$. Unfortunately, we are unable to control the reduction of this space in cohomology using Theorem \ref{T: cohom on hat partial} since we need ``$\ep = b - \frac{1}{p-1}$'' and ``$b \geq b'$''; in our case of $\c S(\hat b, \ep; 0)$, we have $b' = \hat b > b = \hat b - \frac{1}{p-1}$. We may fix this by viewing $\beta_{\infty, \kappa, 1}(\gamma^{2n} t^n) \in \c S(\hat b, \ep; 0) \subset \c S(\hat b - \frac{1}{p-1}, \ep; 0)$. Using Theorem \ref{T: cohom on hat partial}, $\bar \beta_{\infty, \kappa, 1}(\gamma^{2n} t^n) = \sum_{m \geq 0} B(m,n) t^m \in \c R(\hat b - \frac{1}{p-1}; 0)$. Writing $ \sum_{m \geq 0} B(m,n) t^m =  \sum_{m \geq 0} B(m,n)\gamma^{-2m} \cdot \gamma^{2m} t^m$, we see that $ord_p B(m, n) \gamma^{-2m} \geq 2(\hat b - 1/(p-1)) m - 2m/(p-1) = 2m(1 - \frac{1}{p-1})$. The result now follows from the previous paragraph and the argument given by Dwork in \cite[Section 7]{Dwork-zetafunctionof-1964}
\end{proof}

\section{Some delayed proofs}

\subsection{Proof of Theorem \ref{T: dwork trace}}\label{SS: dwork trace proof}.

In order to prove this result, we need to first recall the Dwork trace formula on the fibers. Fix $\bar t \in \overline{\bb F}_q^*$ and let $\hat t$ be its Teichm\"uller lift. Set $d(\bar t) := [\bb F_q(\bar t): \bb F_q]$ and define $q_{\bar t} := q^{d(\bar t)}$. Define
\[
\alpha_{\hat t} := \psi_x^{a d(\bar t)} \circ F_{a d(\bar t)}(\hat t, x).
\]
Observe that $\alpha_{\hat t}$ is an endomorphism of $K_{\hat t}(b)$, where $K_{\hat t}(b)$ denotes the space obtained from $K(b', b)$ by specializing $t = \hat t$. Dwork's trace formula states
\[
(q_{\bar t}^m-1)^n Tr(\alpha_{\hat t}^m \mid K_{\hat t}(b) ) = \sum_{\bar x \in \bb F_{q_{\bar t}^m}^*} \Psi \circ Tr_{\bb F_{q_{\bar t}^m} / \bb F_q}( x + \frac{\bar t}{x}),
\]
or equivalently
\[
L(Kl_{\bar t}, T) = \frac{det(1 - \alpha_{\hat t} T \mid K_{\hat t}(b))}{det(1 - q_{\bar t} \alpha_{\hat t} T \mid K_{\hat t}(b))}.
\]
By Theorem \ref{T: Dwork rel cohom}, the operator $D_{\hat t} := x \frac{d}{d x} + \pi \left( x - \frac{\hat t}{x} \right)$ acts on the space $K_{\hat t}(b)$ such that the associated cohomology satisfies $H^0( K_{\hat t}(b) ) := ker(D_{\hat t}) = 0$ and $H^1( K_{\hat t}(b)) := K_{\hat t}(b) / D_{\hat t} K_{\hat t}(b) \cong \Omega(\hat t) + \Omega(\hat t) \frac{\pi \hat t}{x}$. Further, the Frobenius $\alpha_{\hat t}$ induces a map $\bar \alpha_{\hat t}$ on cohomology satisfying
\begin{align*}
L(Kl_{\bar t}, T) &= det(1 - \bar \alpha_{\hat t} T \mid H^1(K_{\hat t}(b))) \\
&= (1 - \pi_0(\bar t) T)(1 - \pi_1(\bar t) T).
\end{align*}

We now move on to the infinite symmetric powers of the fibers, defined analogously to $S(b', \ep)$. Define $S_{\hat t}(\ep)$ as the space obtained from $S(b', \ep)$ by specializing $t = \hat t$. As a consequence of Theorem \ref{T: Dwork rel cohom}, observe that $\bar \alpha_{\hat t}(1) = 1 + \eta_{\hat t} + \zeta_{\hat t} \frac{1}{x}$ for some elements $\eta_{\hat t}, \zeta_{\hat t} \in \Omega(\hat t)$ satisfying $|\eta_{\hat t}|_p < 1$ and $|\zeta_{\hat t}|_p < 1$. Define $\Upsilon_{q_{\bar t}}: H^1(K_{\hat t}(b)) \rightarrow S_{\hat t}(\ep)$ by $\zeta + \xi \frac{\pi \hat t}{x} \mapsto \zeta + \xi w$. For any $\tau \in \bb Z_p$, $(\Upsilon_{q_{\bar t}} \circ \alpha_{\hat t}(1))^{\tau}$ is a well-defined element of $\Omega(\hat t)[[w]]$. Define $[\bar \alpha_{\hat t}]_\kappa$ acting on $S_{\hat t}(\ep)$ by linearly extending
\[
[\bar \alpha_{\hat t}]_\kappa( \w{m} ) := \kappa^{\u{m}} (\Upsilon_{q_{\bar t}} \circ \bar \alpha_{\hat t}(1) )^{\kappa - m} (\Upsilon_{q_{\bar t}} \circ \bar \alpha_{\hat t} \frac{\pi \hat t}{x})^m.
\]
By Lemma \ref{L: well-defined}, $[\bar \alpha_{\hat t}]_\kappa$ is a well-defined endomorphism of $S_{\hat t}(\ep)$ when $\tilde b - \frac{1}{p-1} \geq b$. The main purpose for working on the fibers is that, by an argument similar to \cite[Corollary 2.4, part 2]{MR3249829}, we have
\begin{equation}\label{E: local factor}
det(1 -[\bar \alpha_{\hat t}]_\kappa T \mid S_{\hat t}(\ep) ) = \prod_{m=0}^\infty \left(1 - \pi_0(\bar t)^{\kappa - m} \pi_1(\bar t)^m T \right),
\end{equation}
which is the local factor in the Euler product of $L(Sym^{\kappa, \infty} Kl, T)$. We may now prove the theorem.

\begin{proof}[Proof of Theorem \ref{T: dwork trace}]
Let $B_\kappa(t)$ be the infinite dimensional matrix of $[\bar \alpha_a]_\kappa$ with respect to the basis $\c B := \{ \w{m} : m \geq 0 \}$. Write $B_\kappa(t) = \sum_{n \geq 0} b_n t^n$, where $b_n$ is an infinite matrix with entries in $\bb C_p$. Define $F_{B_\kappa} := (b_{q n - m} )_{(n,m)}$ where $n, m \geq 0$, and we set $b_{qn-m} := 0$ if $qn-m < 0$, the zero matrix. As described prior to \cite[Lemma 2.3]{MR3239170}, the matrix of $\beta_\kappa$ with respect to $\c B$ is $F_{B_\kappa}$. By \cite[Lemma 4.1]{Wan-p-adic-representation}, the Dwork trace formula gives
\begin{align*}
(q^m - 1)Tr( \beta_\kappa^m ) &= (q^m - 1) Tr( F_{B_\kappa}^m) \\
&= \sum_{\substack{ \bar t \in \bb F_{q^m}^* \\ \hat t = \text{Teich}(\bar t)}}  Tr( B_\kappa(\hat t^{q^{m-1}}) \cdots B_\kappa(\hat t^q) B_\kappa(\hat t) ) \\
&= \sum_{\substack{ \bar t \in \bb F_{q^m}^* \\ \hat t = \text{Teich}(\bar t)}}  Tr( [ \bar \alpha_{\hat t} ]_{\kappa}^m \mid S_{\hat t}(\ep) )
\end{align*}
It now follows from (\ref{E: local factor}) that 
\[
L(Sym^{\infty, \kappa} Kl, T) = det(1 - \beta_\kappa T \mid S(b', \ep))^{\delta_q}.
\]
(See the argument succeeding \cite[Equation 8]{MR3249829} for details.)
\end{proof}

\subsection{Proof of Lemma \ref{L: partial beta}}\label{SS: proof of commute of partial}

The result follows from a limit using finite symmetric powers. Let $k$ be a positive integer. Define the map
\[
Sym^k \bar \alpha_m: Sym^k_{L(b')} H_t^1(b', b) \rightarrow Sym^k_{L(b'/p^m)} H_{t^{p^m}}^1( b' / p^m, b)
\]
Define the $\text{length} (w^{(m)}) := m$. For $\xi \in S(b', \ep)$, define $\text{length}(\xi)$ as the supremum of the lengths of the individual terms in the series defining $\xi$. In most cases, the length of $\xi$ will be infinite. Set $w_0 := 1$ and $w_1 := \pi t/x$, so that $\{w_0, w_1\}$ is a basis of $H_t^1(b', b)$, and $\{ w_0^{k-m} w_1^m : 0\leq m \leq k\}$ is a basis of $Sym^k_{L(b')} H_t^1(b', b)$ over $L(b')$. As the basis is finite, $\{ w_0^{k-m} w_1^{(m)} : 0\leq m \leq k\}$ is also a basis of $Sym^k_{L(b')} H_t^1(b', b)$ over $L(b')$, where $w_1^{(m)} := k^{\u m} w_1^m$. Define $S^{(k)}(b', \ep) :=  \{ \xi \in S(b', \ep)  \mid \text{length}(\xi) \leq k \}$. We may identify $S^{(k)}(b', \ep)$ with $Sym^{k} H_t^1(b', b)$ using the map
\[
w^{(m)} \longmapsto w_0^{k-m} w_1^{(m)}. 
\]
Let $k_n$ be a sequence of positive integers tending to infinity such that $p$-adically $k_n \rightarrow \kappa$. For each $n \in \bb Z_{\geq 0}$, define the approximation map $[ \bar \alpha_a ]_{(\kappa; n)}: S(b', \ep) \rightarrow S(b'/q, \ep)$ by
\[
[ \bar \alpha_a ]_{(\kappa; n)}( w^{(m)} ) := 
\begin{cases}
[ \bar \alpha_a ]_{k_n}( w^{(m)} ) & \text{if } m \leq k_n \\
0 & \text{otherwise.}
\end{cases} 
\]
Using the identification with the $k_n$-symmetric power, we have
\begin{align*}
[\bar \alpha_a]_{(\kappa; n)}(w^{(m)}) &=  (\Upsilon_q \circ \bar \alpha_a(1))^{k_n - m} ( \Upsilon_q \circ \bar \alpha_a \frac{\pi t}{x} )^m \\
&\cong \left( Sym^{k_n} \bar \alpha_a  \right)(w_0^{k_n - m} w_1^{(m)}),
\end{align*}
and thus $[\bar \alpha_a]_{(\kappa; n)} \cong  Sym^{k_n} \bar \alpha_a$.

Next, an analogous argument to that in \cite[Lemma 2.2]{MR3249829} demonstrates that  $\lim_{n \rightarrow \infty} [ \bar \alpha_a ]_{(\kappa; n)} = [\bar \alpha_a]_\kappa$ as maps from $S(b', \ep) \rightarrow S(b'/q, \ep)$. Consequently, if we define $\beta_{(\kappa; n)} := \psi_t^a  \circ [ \bar \alpha_a]_{(\kappa; n)}$ then as operators on $S(b', \ep)$, 
\begin{equation}\label{E: beta limit}
\lim_{n \rightarrow \infty} \beta_{(\kappa; n)} = \beta_{\kappa}.
\end{equation}
Lastly, define $\partial_{(\kappa; n)}$ on $S(b', \ep)$ as follows. For $0 \leq m \leq k_n-1$,
\begin{align*}
0 \leq m \leq k_n-1&: \qquad \partial_{(\kappa; n)}(t^r \w{m}) := r t^r \w{m} +  t^r \w{m+1} + m (k_n - m +1) \pi^2 t^{r+1} \w{m-1}, \\
m = k_n&: \qquad \partial_{(\kappa; n)}(t^r \w{k}) := r t^r \w{k} + k_n \pi^2 t^{r+1} \w{k-1}, \\
m > k_n&: \qquad \partial_{(\kappa; n)}(t^r \w{k}) := 0.
\end{align*}
Similarly, define $\tilde \partial_{(\kappa; n)}$ on $Sym^{k_n}_{L(b')} H_t^1(b', b)$ as follows. For $(\xi_1, \ldots, \xi_{k_n}) \in H_t^1(b', b)^{\oplus k_n}$, define
\[
\tilde \partial_{(\kappa; n)}(\xi_1 \cdots \xi_{k_n}) := \sum_{i=1}^{k_n} \xi_1 \cdots \hat \xi_i \cdots \xi_{k_n} \partial(\xi_i),
\]
where $\partial$ was defined by (\ref{E: diff of partial}). Then $\partial_{(\kappa; n)} \cong \tilde \partial_{(\kappa; n)}$ through the identification of the spaces $S^{(k_n)}(b', \ep)$ and $Sym^{k_n}_{L(b')} H_t^1(b', b)$. Further, again using the identification, 
\begin{equation}\label{E: est partial beta}
q \partial_{(\kappa; n)} \circ \beta_{(\kappa; n)} = \beta_{(\kappa; n)} \circ \partial_{(\kappa; n)}.
\end{equation}
Lastly, observe that with the topology of coefficient-wise convergence on $S(b', \ep)$, $\partial_{(\kappa; n)} \rightarrow \partial_\kappa$, which follows by considering the case $0 \leq m \leq k_n-1$:
\[
| (\partial_{(\kappa; n)} - \partial_\kappa)(t^r\w{m}) | = |  m ( k_n - \kappa) \pi^2 t^{r+1} \w{m-1} |,
\]
which tends to zero as $n \rightarrow \infty$. Lemma \ref{L: partial beta} now follows by taking the limit of (\ref{E: est partial beta}).

\bibliographystyle{amsplain}
\bibliography{../References/References.bib}

\end{document}